\newtheorem{thmIntro}{Theorem}
\newtheorem{thm}{Theorem}[section]
\newtheorem{prop}[thm]{Proposition}
\newtheorem{cor}[thm]{Corollary}
\theoremstyle{definition}
\newtheorem{defn}[thm]{Definition}
\newtheorem{ex}[thm]{Example}
\newtheorem{rem}[thm]{Remark}
\providecommand\sslash{\mathbin{/\mkern-5.5mu/}}
\newcommand{\sldos}{\SL_2(\mathbb{C})}
\newcommand{\LL}{{\mathbb L}}
\newcommand{\CC}{{\mathbb C}}
\newcommand{\ZZ}{{\mathbb Z}}
\newcommand{\RR}{{\mathbb R}}
\newcommand{\cB}{{\mathcal B}}
\newcommand{\FQ}{\textbf{FQ}}
\newcommand{\G}{{\Gamma}}
\newcommand{\cT}{{\mathcal{T}}}
\newcommand{\R}{{\mathcal{R}}}
\newcommand{\X}{{\mathcal{X}}}
\newcommand{\cA}{{\mathcal{A}}}
\newcommand{\x}{\times}
\newcommand{\Free}[1]{\mathbf{F}_{#1}}
\newcommand{\SVB}{\textbf{\textup{SVectB}}}
\newcommand{\Qdl}{\textbf{\textup{Qdl}}}
\newcommand{\AQ}[1]{{\textup{AQ}_{#1}}}
\newcommand{\GL}[1]{{\textup{GL}_{#1}}}
\newcommand{\AGL}[1]{{\textup{AGL}_{#1}}}
\newcommand{\aglC}{\AGL{1}(\mathbb{C})}
\newcommand{\agl}{\AGL{1}}
\DeclareMathOperator{\Spec}{Spec}
\DeclareMathOperator{\id}{Id}
\DeclareMathOperator{\Id}{Id}
\DeclareMathOperator{\Ad}{Ad}             
\DeclareMathOperator{\Hom}{Hom\,}           
\DeclareMathOperator{\SL}{SL}
\DeclareMathOperator{\Fix}{Fix}
\DeclareMathOperator{\Sym}{Sym}    
\DeclareMathOperator{\Conj}{Conj}
\DeclareMathOperator{\Span}{Span}
\DeclareMathOperator{\Vect}{\mathbf{Vect}}
\DeclareMathOperator{\As}{As}
\newcommand{\upcurvearrowright}{\rotatebox[origin=c]{180}{$\curvearrowleft$}}
\newcommand{\upcurvearrowleft}{\rotatebox[origin=c]{180}{$\curvearrowright$}}
\subjclass[2020]{Primary: 57K14. 
Secondary: 14M35, 
57K12, 
18M20. 
}
\keywords{Alexander polynomial, representation variety, Topological Quantum Field Theory, knot quandle.}
\title{Representations of knot groups in $\textrm{AGL}_{1}(\mathbb{C})$ and Alexander invariants}
\author{\'Angel Gonz\'alez-Prieto}
\address{Departamento de \'Algebra, Geometr\'ia y Topolog\'ia, Facultad de Ciencias Matem\'aticas, Universidad Complutense de Madrid, Plaza Ciencias 3, 28040 Madrid Spain, and Instituto de Ciencias Matem\'aticas (CSIC-UAM-UCM-UC3M), C.\ Nicolás Cabrera 13-15, 28049 Madrid Spain.}\email{angelgonzalezprieto@ucm.es}
\author{Javier Mart\'{\i}nez}
\address{Departamento de Matem\'atica Aplicada a la Ingeniería Industrial, Escuela Técnica Superior de Ingeniería y Diseño Industrial, Universidad Politécnica de Madrid, Ronda de Valencia 3, 28012 Madrid Spain.}
\email{javier.martinezmartinez@upm.es}
\author{Vicente Mu\~noz}
\address{Instituto de Matem\'aticas Interdisciplinar and 
Departamento de \'Algebra, Geometr\'{\i}a y Topolog\'{\i}a, Facultad de Ciencias Matem\'aticas, Universidad Complutense de Madrid, Plaza Ciencias 3, 28040 Madrid Spain.}\email{vicente.munoz@ucm.es}
\begin{document}

\begin{abstract}
    This paper reinterprets Alexander-type invariants of knots via representation varieties of knot groups into the group $\textrm{AGL}_{1}(\mathbb{C})$ of affine transformations of the complex line. In particular, we prove that the coordinate ring of the $\textrm{AGL}_{1}(\mathbb{C})$-representation variety is isomorphic to the symmetric algebra of the Alexander module. This yields a natural interpretation of the Alexander polynomial as the singular locus of a coherent sheaf over $\mathbb{C}^*$, whose fibres correspond to quandle representation varieties of the knot quandle. As a by-product, we construct Topological Quantum Field Theories that provide effective computational methods and recover the Burau representations of braids. This theory offers a new geometric perspective on classical Alexander invariants and their functorial quantization.
\end{abstract}

\maketitle

\section{Introduction}\label{sec:introduction}

Since its inception in 1928, as introduced in \cite{Alexanderoriginal}, the Alexander polynomial has been one of the main invariants in knot theory. Over the past century, it has been extensively studied from several perspectives, playing a central role in connecting topological, algebraic and geometric approaches to the classification of knots. Alexander originally derived it from a Dehn presentation of the knot group using a planar projection: a system of linear equations naturally arises, leading to a determinant that defines the invariant.  The Alexander polynomial was long believed to be the only polynomial knot invariant until the discovery of the Jones and HOMFLY-PT polynomials \cite{Homflyoriginal,Jonesoriginal}. Years later, Witten's seminal work \cite{wittenjonespoly} revolutionized knot theory, establishing the framework for Topological Quantum Field Theories (TQFTs) and various categorifications of algebraic knot invariants, notably Khovanov homology \cite{khovanovCategorification} for the Jones polynomial and its variation for the Alexander polynomial \cite{robert2022quantum}. From the perspective of representation theory, a fruitful development has been the construction of Reshetikhin-Turaev invariants \cite{RTinvariants}, which produce invariants of knots using representations of quantum groups. The Kauffman bracket and the Jones polynomial can thus be recovered from representations of certain quantum groups, such as $U_q(\mathfrak{sl}_2)$, leading also to coloured variations and several other insights. 

This paper explores Alexander invariants from the perspective of representation varieties and their quantization. Given a finitely generated group $\Gamma$ and an algebraic group $G$, the \emph{$G$-representation variety} of $\Gamma$ is defined as
$$
\R_G(\Gamma)= \Hom(\Gamma,G).
$$
This set is naturally endowed with an algebraic structure inherited from $G$, justifying the name representation variety. Due to their relationship with representation theory of $\Gamma$, their geometry and algebraic structures is an active area of research.

In this direction, representation varieties, and their associated GIT conjugation action  quotients $\X_G(\Gamma) =\Hom(\Gamma,G)\sslash G$, known as $G$-character varieties, play an important role in knot theory. The first relation appears when we consider the fundamental group of the complement of a knot $K \subset S^3$
$$
    \Gamma_K = \pi_1(S^3 - K),
$$
usually known as the \emph{knot group}. In that case, very subtle information can be extracted from character varieties, such as in the case $G = \sldos$, where the character variety of the knot group gauges the hyperbolic structures on the knot complement \cite{Cooper1994Planecurves,CullerShalen}. 

The aim of this paper is to show that the whole theory of Alexander-type invariants is encoded in the global and local geometric properties of the $\aglC$-representation variety of knots and tangles, being $\aglC$ the group of affine transformations of the complex line. As we will see, this is not merely a reinterpretation of the Alexander invariants; instead, we show that $\aglC$-representation varieties are the natural framework to formulate, study, and propose categorifications for them. In this direction, the first main result of this work relates the (first) Alexander module of a knot $K$ with the $\aglC$-representation variety. Recall that the Alexander module of a knot $K$ is the first homology group of the cyclic cover of the complement $S^3 - K$, which is naturally a $\CC[t,t^{-1}]$-module, where $t$ acts by the deck transformation of the meridian of $K$.

\begin{thmIntro}\label{thmintro:repr-alexander}
    Let $K \subset S^3$ be a knot. Then, the coordinate ring of the representation variety $\R_{\aglC}(\Gamma_K)$ is isomorphic as $\CC[t,t^{-1}]$-algebra to the symmetric algebra over $\CC[t,t^{- 1}]$ of the Alexander module of $K$.
\end{thmIntro}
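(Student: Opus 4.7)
The plan is to exploit the split structure of $\aglC$ to decompose an $\aglC$-representation into a character plus a twisted cocycle, translate the Wirtinger relations of $\Gamma_K$ into linear equations via Fox calculus, and recognize the resulting quotient algebra as the symmetric algebra of the Alexander module. Writing elements of $\aglC$ as affine maps $z\mapsto az+b$ with composition $(a,b)\cdot(a',b')=(aa',b+ab')$, a representation $\rho\colon \Gamma_K \to \aglC$ decomposes as a pair $(a,b)$ where $a\colon\Gamma_K\to\CC^*$ is a character and $b\colon\Gamma_K\to\CC$ is a crossed homomorphism into the $\Gamma_K$-module $\CC_a$. Since $\Gamma_K^{\mathrm{ab}}=\ZZ$, generated by any meridian $\mu$, the character $a$ reduces to the scalar $t:=a(\mu)\in\CC^*$, producing the projection $\pi\colon\R_{\aglC}(\Gamma_K)\to\CC^*$ that endows $\CC[\R_{\aglC}(\Gamma_K)]$ with its $\CC[t,t^{-1}]$-algebra structure.

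Fix next a Wirtinger presentation $\Gamma_K=\langle x_1,\ldots,x_n\mid r_1,\ldots,r_{n-1}\rangle$. Since each $x_i$ is a meridian, $\rho(x_i)=(t,b_i)$ with a common $t\in\CC^*$ and individual $b_i\in\CC$; the data $(b_1,\ldots,b_n)$ extends uniquely to a crossed homomorphism on the free group $\Free{n}$, and this descends to $\Gamma_K$ precisely when $b(r_j)=0$ for all $j$. The basic identity of Fox calculus
$$b(w)=\sum_{i=1}^n \phi\!\left(\frac{\partial w}{\partial x_i}\right)\, b_i,$$
where $\phi\colon \CC[\Free{n}]\to \CC[t,t^{-1}]$ is induced by the abelianization, rewrites each such condition as a linear equation in the $b_i$ with coefficients in $\CC[t,t^{-1}]$; a direct check on a typical crossing relation $x_k=x_j x_i x_j^{-1}$ recovers the classical Alexander equation $b_k-t b_i-(1-t) b_j=0$. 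Consequently,
$$\CC[\R_{\aglC}(\Gamma_K)]\;\cong\; \CC[t,t^{-1}][b_1,\ldots,b_n]\big/\bigl(\,\text{rows of the Alexander matrix}\,\bigr).$$
The Alexander module $A(K)$ admits a standard Fox-calculus presentation $\CC[t,t^{-1}]^{n-1}\xrightarrow{A(t)}\CC[t,t^{-1}]^n\to A(K)\to 0$ with exactly this matrix $A(t)$; since for any finitely presented module the symmetric algebra is the polynomial ring on the generators modulo the linear forms defined by the relations, the above quotient is canonically identified with $\Sym_{\CC[t,t^{-1}]}(A(K))$.

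The main technical obstacle lies in this last identification: one must pin down the conventions for the Alexander module (in particular the treatment of the basepoint) so that its Fox-Wirtinger presentation matches exactly the equations obtained from twisted cocycles on $\Gamma_K$; a useful sanity check is that the generic fiber of $\Spec\Sym_{\CC[t,t^{-1}]}A(K)\to\CC^*$ must be one-dimensional, matching the line of principal cocycles at a generic value of $t$. A related point is to show that the isomorphism is intrinsic, i.e.\ independent of the chosen Wirtinger presentation. The cleanest approach is to describe both sides by representable functors---$\R_{\aglC}(\Gamma_K)$ as the moduli of $\aglC$-valued representations of $\Gamma_K$, and $\Spec\Sym_{\CC[t,t^{-1}]}(A(K))$ as the moduli of $\CC[t,t^{-1}]$-linear functionals on $A(K)$---and then to upgrade the presentation-level bijection between twisted cocycles and such functionals to a canonical natural transformation between these functors.
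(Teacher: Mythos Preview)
Your argument is correct and is precisely the Fox-calculus/Wirtinger route the paper develops in Sections~2 and~3.1: the off-diagonal entries of an $\aglC$-representation satisfy the linearized Wirtinger relations \eqref{eqn:typeIlineareq}--\eqref{eqn:typeIIlineareq}, which coincide with the rows of the Alexander matrix \eqref{eqn:FoxdertypeI}--\eqref{eqn:FoxdertypeII}, and this identifies the coordinate ring with $\Sym_{\CC[t,t^{-1}]}A(K)$ exactly as you say. The paper also supplies a second, presentation-free proof (Theorem~\ref{thm:coordinate-ring-agl1}) by directly constructing a $\CC[t,t^{-1}]$-module map $H_1(K^c_\infty)\to\CC[\R_{\agl}(K)]$, $\gamma\mapsto\big(\rho\mapsto\varpi(\rho|_{\Gamma_K'}(\gamma))\big)$, and checking it induces an isomorphism on symmetric algebras; this addresses in one stroke the intrinsicness concern you raise at the end, whereas your approach (like the paper's first) is more explicit and immediately connects to the Alexander matrix and elementary ideals.
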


Theorem \ref{thmintro:repr-alexander} can be seen as a vast generalization of a key prior observation by de Rham \cite{deRham}, 
stating that the Alexander polynomial of $K$ can be computed from the locus of non-abelian representations $\rho: \Gamma_K \to \aglC$. Explicitly, he showed that if we prescribe the image of a meridian $\gamma \in \Gamma_M$ to be of the form $\rho(\gamma) =\left(\begin{smallmatrix}
    1 & a \\ 0 & t
\end{smallmatrix}\right)$, with $a \in \CC$ and $t \in \CC^* = \CC - \{0\}$, then a non-abelian representation of $\Gamma_K$ into $\aglC$ exists if and only if $t$ is a root of the Alexander polynomial of $K$ (see \cite{Burde}). The theorem has been a starting point for significant results concerning the local structure of the $\sldos$-character variety \cite{heusenerportipeiro}, demonstrating that non-abelian reducible representations can be deformed into irreducible ones via cohomological methods. This approach has proven useful in other contexts \cite{heusenerportipsl,Kozaideformations}, and has been extended to other groups \cite{Burde-deRham2} as well as to twisted Alexander invariants \cite{taehee,silverwilliams}.

In this direction, in this work we revisit these ideas from a geometric perspective, focusing on the $\aglC$-representation variety of a knot group. Given an oriented knot $K \subset S^3$, we focus on the regular morphism between representation varieties
\begin{equation}\label{eq:Alexander-fibration-intro}
\pi_K: \R_{\agl}(\Gamma_K) \longrightarrow \R_{\CC^{\ast}}(\Gamma_K) = \CC^*
\end{equation}
induced by the projection onto the diagonal entry $\aglC \to \CC^{\ast}$, which we will refer to as the \emph{Alexander fibration}. 
We shall show that the fibres of the Alexander fibration (\ref{eq:Alexander-fibration-intro}) can also be understood in purely representation-theoretic terms. Indeed, for $t \in \R_{\CC^{\ast}}(\Gamma_K) = \CC^*$, the fibre $\pi^{-1}_K(t)$ is naturally connected to the quandle theory of $K$, and explicitly it can be identified with the quandle representation variety
$$
\R_{\AQ{t}}(Q_K) = \Hom_\Qdl(Q_K, \AQ{t})
$$
of Joyce's knot quandle $Q_K$ introduced in \cite{Joycequandle}, into the $t$-Alexander quandle $\AQ{t}$. These quandle representation varieties turn out to be complex vector spaces in a natural way, exhibiting the Alexander fibration as a `singular' complex vector bundle (i.e.,\ as the total space of a coherent sheaf). This provides a reinterpretation of many Alexander invariants in terms of classical algebraic invariants of the character variety. In particular, we show that the exceptional locus of the Alexander fibration is determined by a stratification of $\R_{\CC^*}(\Gamma_K) = \CC^*$, known as the Alexander stratification, whose vanishing ideals coincide with the elementary ideals of the Alexander module. In particular, this Alexander fibration encodes the Alexander polynomial as anticipated by de Rham's result. Furthermore, the Alexander module re-arises precisely as the set of global sections of the Alexander fibration.

Using these ideas, we will develop a Topological Quantum Field Theory explicitly computing the Alexander module of a knot. For this purpose, let $\cT$ be the \emph{category of tangles} on a cylinder, whose objects are oriented points on a plane with morphisms being oriented tangles between them. Observe that knots and links appear in $\cT$ as endomorphisms of the empty set of points. Then, the main result of this paper is the following theorem.

\begin{thmIntro}\label{thm-intro:TQFT-global}
There exists a braided monoidal functor
$$
\cA: \cT \longrightarrow \Span(\SVB),
$$
computing the Alexander module of knots.
\end{thmIntro}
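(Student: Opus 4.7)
The plan is to present the braided monoidal category $\cT$ via generators (cups, caps, positive and negative crossings between adjacent strands) and relations (snake/zig-zag identities, braid and Reidemeister-type relations), and to define $\cA$ explicitly on the generators. On objects, I would send a sequence of $n$ signed points to the trivial rank-$n$ object of $\SVB$, which by Theorem \ref{thmintro:repr-alexander} applied to the free group $F_n$---the knot group of the trivial $n$-strand tangle---is the Alexander fibration $\R_\agl(F_n) \to \CC^*$ of \eqref{eq:Alexander-fibration-intro}; its fiber over $t \in \CC^*$ is the free $t$-Alexander quandle representation variety on $n$ generators, i.e., $\CC^n$. On a general tangle $T: n \to m$ with tangle group $\Gamma_T$, I would assign the span
\[
    \R_{\agl}(F_n) \longleftarrow \R_{\agl}(\Gamma_T) \longrightarrow \R_{\agl}(F_m),
\]
whose legs are restriction along the top and bottom boundary meridians. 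An extension of Theorem \ref{thmintro:repr-alexander} to tangle complements ensures that this diagram lies in $\SVB$, with the middle term the total space of a coherent sheaf over $\CC^*$.

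Functoriality then reduces to Seifert--Van Kampen: stacking $T_2\circ T_1$ presents $\Gamma_{T_2\circ T_1}$ as a pushout of $\Gamma_{T_1}$ and $\Gamma_{T_2}$ over the free group on the middle strands, so $\R_\agl(\Gamma_{T_2\circ T_1})$ is the fiber product of representation varieties, matching composition in $\Span(\SVB)$. The monoidal structure sends disjoint union of tangles to the tensor product (over the base $\CC^*$) of the associated objects of $\SVB$, and the braiding is implemented by the positive crossing: its span is, fiberwise over each $t\in \CC^*$, the linear isomorphism of $\CC^n$ encoding the quandle operation of $\AQ{t}$. The braid equation thereby becomes precisely the self-distributivity axiom of the Alexander quandle. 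For a closed knot $K:\emptyset\to\emptyset$ the span collapses to a single object $\cA(K)\in\SVB$ whose sheaf of sections, by Theorem \ref{thmintro:repr-alexander}, is the Alexander module of $K$.

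The main obstacle is twofold. First, one must verify that the spans assigned to the four elementary generators satisfy all the defining relations of the oriented tangle category---zig-zag identities for adjunction, Reidemeister II and III in the interior, and compatibility of the braiding with cups and caps---each of which amounts to an equality of spans of coherent sheaves over $\CC^*$, to be checked by a Wirtinger-type computation of the involved tangle groups together with the quandle interpretation of the fibers. Second, the fibers of the Alexander fibration jump in dimension over the zero locus of the Alexander polynomial, so the associated sheaves are not in general locally free; care is therefore needed to phrase the construction in $\SVB$ (coherent sheaves over $\CC^*$) so that fiber products and tensor products preserve the category, rather than in the more restrictive setting of genuine vector bundles.
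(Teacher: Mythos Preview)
Your core construction---assign to a tangle $T$ the span $\R_\agl(\Gamma_\epsilon)\leftarrow\R_\agl(\Gamma_T)\rightarrow\R_\agl(\Gamma_\varepsilon)$ given by restriction to the boundaries, and derive functoriality from Seifert--Van Kampen---is exactly what the paper does. Two points deserve correction, though.

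First, your ``main obstacle'' is illusory. You propose to define $\cA$ on the generators of $\cT$ and then verify the zig-zag and Reidemeister-type relations by explicit Wirtinger computation. The paper instead defines $\cA(T)$ \emph{intrinsically} for every tangle $T$ via the representation variety of its complement. Since isotopic tangles have homeomorphic complements, hence isomorphic tangle groups and representation varieties, invariance under the Reidemeister relations is automatic; combined with the pushout argument you already cite, there is nothing further to check. The generator-by-generator verification is unnecessary.

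Second, your base space is off. For an object $\epsilon$ of length $n$ the group $\Gamma_\epsilon$ is free on $n$ generators, so the Alexander fibration is $\R_\agl(F_n)\to\R_{\CC^*}(F_n)=(\CC^*)^n$, not $\CC^*$; similarly a tangle with $s$ strands sits over $(\CC^*)^s$. The paper's category $\SVB$ accordingly does not fix a common base: a morphism is a pair $(f,\tilde f)$ with $f$ a map of base varieties and $\tilde f$ fibrewise linear, and the monoidal structure is fibrewise direct sum over the product of bases, not a tensor product over a fixed $\CC^*$. Your version implicitly pulls everything back along the diagonal $\CC^*\hookrightarrow(\CC^*)^n$; this can be made coherent and suffices for the statement about knots (where the base genuinely is $\CC^*$), but it is not the paper's construction and discards the multivariable information for links.
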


In Theorem \ref{thm-intro:TQFT-global}, the target category $\Span(\SVB)$ is the so-called span category of singular vector bundles. Explicitly, the functor $\cA$ associates to any tangle $T$ the span of $\aglC$-representation varieties
$$
\xymatrix{
& \R_{\aglC}(\Gamma_T) \ar[dl]_{i} \ar[dr]^{j} & \\
\R_{\aglC}(\Gamma_\epsilon) & & \R_{\aglC}(\Gamma_\varepsilon)
}
$$
where $\epsilon$ and $\varepsilon$ are the boundary points $\partial T=\overline{\epsilon} \sqcup \varepsilon$. There, the maps $i: \R_{\aglC}(\Gamma_T) \to \R_{\aglC}(\Gamma_\epsilon)$ and $j: \R_{\aglC}(\Gamma_T) \to \R_{\aglC}(\Gamma_\varepsilon)$ are the restriction maps to the boundaries. Each of these representation varieties are on themselves a singular vector bundle for their corresponding Alexander fibrations, and the restriction maps $i$ and $j$ preserve these fibered structures. In particular, if $K$ is a knot, seen as a tangle between empty sets, the corresponding span will be
$$
\xymatrix{
0 & \R_{\aglC}(\Gamma_K) \ar[l] \ar[r] & 0, \\
}
$$
in such a way that the global sections of this singular vector bundle is precisely the symmetric algebra of the Alexander module of $K$.

The functor $\cA: \cT \to \Span(\SVB)$ must be understood as a Topological Quantum Field Theory (TQFT for short) from the category $\cT$ of tangles with values in the category $\Span(\SVB)$, in the sense of Atiyah \cite{atiyah1988topological} and Segal \cite{segal2001topological}. These TQFTs are the analogues of a functorial quantization of a given field theory, in which manifolds are quantized into a (Hilbert) vector space and bordisms are translated into a `path integral' operation between the boundary Hilbert spaces. In the context of knot theory, it is customary to  replace the target category of vector spaces by another monoidal category ($\Span(\SVB)$ in our case) better adapted to the geometric problem but mimicking all the properties of a quantization procedure. These TQFT-based methods have been extensively studied in the literature due to their ability to capture and provide effective methods of computation of subtle invariants, such as (coloured) Jones or HOMFLY-PT polynomials, see \cite{stroppel2022categorification} for a survey on the state-of-art. 

The TQFT constructed in Theorem \ref{thm-intro:TQFT-global} can also be understood fibrewise, using the quandle representation interpretation. Therefore, if $\Span(\Vect_{\CC})$ is the category of spans of vector spaces, we get the following result.

\begin{thmIntro}\label{thm-intro:TQFT-fibre}
For any $t\in \CC^{\ast}$, there exist a braided monoidal functor $\cA_t$,
$$
\cA_t: \cT\longrightarrow\Span(\Vect_{\CC}),
$$
which computes the quandle $\AQ{t}$-representation variety.
\end{thmIntro}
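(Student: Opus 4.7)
The natural strategy is to derive $\cA_t$ as the specialization at $t$ of the global TQFT $\cA$ constructed in Theorem~\ref{thm-intro:TQFT-global}. Concretely, I would construct a braided monoidal \emph{fibre-at-$t$} functor
$$
\Phi_t: \Span(\SVB) \longrightarrow \Span(\Vect_{\CC}),
$$
and then define $\cA_t := \Phi_t \circ \cA$.

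On objects, $\Phi_t$ sends a singular vector bundle $E \to \CC^*$ to its fibre $E_t \in \Vect_{\CC}$, which is a complex vector space by the very definition of the category $\SVB$. A morphism in $\Span(\SVB)$ is a span $E_1 \leftarrow E \rightarrow E_2$ whose legs are maps over $\CC^*$, and $\Phi_t$ sends it to the induced span of fibres $(E_1)_t \leftarrow E_t \rightarrow (E_2)_t$. The essential verification is that $\Phi_t$ respects span composition, i.e.\ that for singular vector bundles $E_1 \to E_2 \leftarrow E_3$ over $\CC^*$ one has
$$
(E_1 \times_{E_2} E_3)_t \cong (E_1)_t \times_{(E_2)_t} (E_3)_t.
$$
Since the objects of $\SVB$ are total spaces of coherent sheaves over $\CC^*$, fibered products correspond to tensor products of coordinate rings over $\CC[t,t^{-1}]$, so the identity reduces to associativity of tensor products combined with base change along $\CC[t,t^{-1}] \to \CC[t,t^{-1}]/(t-t_0)$. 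The braided monoidal structure on $\SVB$ is fibrewise by construction and the braiding is symmetry of Cartesian products, so both are automatically preserved by $\Phi_t$.

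To identify $\cA_t$ with the claimed quandle invariant, I would then invoke the interpretation, already discussed in the introduction, of the fibres of the Alexander fibration as quandle representation varieties: for any knot $K$ the fibre of $\pi_K: \R_{\agl}(\Gamma_K) \to \CC^*$ at $t$ is canonically $\R_{\AQ{t}}(Q_K) = \Hom_{\Qdl}(Q_K, \AQ{t})$, and the analogous identification extends to any tangle $T$ via its fundamental quandle $Q_T$. Since the boundary restriction maps respect these identifications, applying $\Phi_t$ to the span produced by Theorem~\ref{thm-intro:TQFT-global} yields the desired span of quandle representation varieties, giving the values of $\cA_t$ on objects and morphisms as in the statement.

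The main obstacle I anticipate is the compatibility of $\Phi_t$ with span composition, namely the fibered-product identity displayed above. Fibre functors on coherent sheaves are only right exact in general, so well-definedness of $\Phi_t$ is not automatic; however, the pullback in $\SVB$ is built from the symmetric algebra of an underlying coherent sheaf, so the check reduces to showing that specialization at $t$ commutes with both the symmetric algebra construction and the module tensor product. Both are formal manipulations of tensor products, but organizing them cleanly so as to conclude functoriality of $\cA_t$ — rather than mere pointwise existence — is where care will be needed.
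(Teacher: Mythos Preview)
Your approach is essentially the reverse of the paper's: the paper constructs $\cA_t$ \emph{directly} in Section~\ref{sec:TQFT}, defining it on objects and morphisms via quandle representation varieties and proving functoriality from the Seifert--van Kampen property for tangle quandles, $Q_{T'\circ T}\cong Q_{T'}\ast_{Q_{\epsilon_2}}Q_T$, so that $\Hom_{\Qdl}(-,\AQ{t})$ carries this pushout to the required pullback of spans. Only afterwards (Section~\ref{sec:TQFTsingular}) is the global functor $\cA$ built, by the same argument with $\agl$ in place of $\AQ{t}$, and the two are related by a natural transformation $\cA_t\Rightarrow\cA$. In particular, the paper's proof of Theorem~\ref{thm-intro:TQFT-global} explicitly invokes the arguments for $\cA_t$, so deducing $\cA_t$ from $\cA$ as a black box is circular within the paper's logical structure.

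There is also a concrete gap in your construction of $\Phi_t$. The objects produced by $\cA$ are not singular vector bundles over a fixed base $\CC^*$: for an object $\epsilon$ of length $n$ one has $\R_{\agl}(\epsilon)\to\R_{\CC^*}(\epsilon)\cong(\CC^*)^{n}$, and for a tangle $T$ the base is $\R_{\CC^*}(T)$, whose dimension depends on the number of components. Thus ``fibre at $t$'' is not a functor out of $\Span(\SVB)$ as you describe it; what one actually needs is pullback along the diagonal inclusion $\star\hookrightarrow(\CC^*)^{n}$, $t\mapsto(t,\ldots,t)$, and this diagonal depends on the object. The paper handles exactly this point when building the natural transformation $\tau_t:\cA_t\Rightarrow\cA$ (end of Section~\ref{sec:TQFTsingular}), but it does so \emph{after} $\cA_t$ is already in hand, not as a means of defining it. Your worry about right exactness, by contrast, is not the real obstruction here: pullbacks in $\SVB$ are scheme-theoretic fibered products, and taking a fibre over a point commutes with these on the nose; the difficulty is purely in making sense of ``the fibre at $t$'' uniformly across varying bases.
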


In this manner, the functor $\cA_t$ associates, to any tangle $T$ with $\partial T=\overline{\epsilon} \sqcup \varepsilon $, the span of vector spaces
$$
\xymatrix{
\R_{\AQ{t}}(Q_\epsilon) & \R_{\AQ{t}}(Q_T) \ar[l]_{i} \ar[r]^{j} & \R_{\AQ{t}}(Q_\varepsilon),
}
$$
corresponding to the fibres over $t \in \CC^*$ of the span $\cA(T)$. In this setting, the restriction maps $i: \R_{\AQ{t}}(Q_T) \to \R_{\AQ{t}}(Q_\epsilon)$ and $j: \R_{\AQ{t}}(Q_T) \to \R_{\AQ{t}}(Q_\varepsilon)$ are just linear maps between complex vector spaces. Another advantage is that the TQFT of Theorem \ref{thm-intro:TQFT-fibre} is highly computable, and can be completely described for the generators of $\cT$ in terms of the quandle structure of the Alexander quandle $\AQ{t}$, as done in Section \ref{sec:explicitcalculations}.

However, perhaps the most striking consequence of Theorem \ref{thm-intro:TQFT-fibre} is that it automatically recovers the celebrated Burau representations of braids. To this purpose, consider the subcategory $\cB$ of $\cT$ of braids, i.e.\ tangles that are only allowed to move forward. The endomorphisms in $\cB$ of the set of $n$ points thus coincide with the Artin braid group $\textup{B}_n$. It turns out that $\cA_t$ restricted to $\cB$ gives rise to a functor
$$
   	\cA_t|_{\cB}: \cB \longrightarrow \Vect_\CC
$$
onto the genuine category of vector spaces, leading to a TQFT on $\cB$ in the classical sense. In this manner, given a braid $B \in \textup{B}_n$, we get a linear map
$$
    \cA_t(B): \R_{\AQ{t}}(\Gamma_n) = \CC^n \to \R_{\AQ{t}}(\Gamma_n) = \CC^n.
$$
We prove in Section \ref{sec:braids} that this map coincides with the (unreduced) Burau representation of $B$, as defined in \cite{burau1935zopfgruppen,KasselTuraevbraidgroups}. This provides a novel interpretation of Burau representations purely in terms of representation varieties.

This is particularly interesting when relating knots and braids. Recall that Alexander's theorem states that every knot can be represented as the closure $\overline{B}$ of a braid $B$. This fact provides a way to compute the Alexander polynomial using the Burau representation, as shown, for instance, in \cite{KasselTuraevbraidgroups}. In this direction, the TQFT that we define $\mathcal{A}_t$, 
furnishes a new direct way to prove this relation, since the braid closure operator corresponds to taking the fix locus of the Burau representation, recovering in a straightforward way the standard formulas for the Alexander polynomial.

We finish this section by pointing out that the classical results re-proven in this work are just a few examples of the flexibility and simplicity that the developed TQFT-based framework confers. Thanks to the functor $\mathcal{A}_t: \cT\to \Span(\Vect_{\CC})$, we can provide alternative proofs of well-known results in knot theory involving Alexander polynomials with a direct approach. Furthermore, we expect to exploit these techniques to produce new explicit descriptions of the Alexander module for certain families of knots, which will be presented in a separate note \cite{rationalknots}.

\subsection*{Related work} The Alexander theory of a knot is a widely studied topic that has been analyzed through different viewpoints. It was soon recognized that the Alexander polynomial is an invariant associated with the first homology group of the infinite cyclic cover of the knot complement. Consequently, it can be obtained from the Seifert matrix or via Fox calculus, as noted in reference texts on knot theory \cite{BurdeZieschangbook,Kauffmanonknotsbook,Lickorish}. Conway later introduced the potential function \cite{Conwayenumeration} --now known as the Alexander-Conway polynomial-- and demonstrated how to compute it recursively using \emph{skein relations}, emphasizing the importance of local relations that were already present in Alexander's original work. Subsequence advances in knot theory have further explored Alexander invariants from other points of view, such as the Vassiliev invariants \cite{bar1995vassiliev,vassiliev1992cohomology}, the Reidemeister torsion \cite{Milnortorsion,Turaevtorsion} or knot Floer homology \cite{knotfloer1}, among many others. 

The relation of the Alexander polynomial with representation theory has also been explored in the literature. Back in the 60's, de Rham \cite{deRham} already reported that abelian representations of the $\sldos$-character variety appear precisely over the roots of the Alexander polynomial. This character variety actually plays a central role in the construction of other celebrated knot invariats, such as the A-polynomial \cite{Cooper1994Planecurves} and the brand-new slope invariant \cite{benard2021slope}. This relation has been clarified through the theory of skein modules \cite{SikoraPrz}, by means of its relation with quantum groups and quantum field theories. Using the representation theory of surface groups, Turaev \cite{Turaevskeinquantization} showed that the Kauffman bracket skein algebra provides a quantization of the $\sldos$-character variety. Moreover, representations of this algebra generalize the Kauffman bracket invariant for links \cite{bonahonwong2, bonahonwong1}. For 3-manifolds, it has recently been proved that the Kauffman skein module is finite dimensional \cite{finiteskein} and has been computed in certain cases \cite{sikora3manifolds}.

However, to the best of our knowledge, it has not been reported in the literature such a clear relation between the geometry of the $\aglC$-representation variety and Alexander modules as the one provided by our Theorem \ref{thmintro:repr-alexander}. Perhaps the closest work in this direction was conducted by Hironaka \cite{hironaka1997alexander}, who exploited the idea of attaching a coherent sheaf from a purely cohomological point of view with the aim of identifying non-K\"ahler finitely generated groups. In that work, equations defining the jumping loci for the first cohomology group of one-dimensional representations of a finitely presented group were computed via Fox calculus. These variations in dimension naturally induce a stratification of the character variety of a knot group, recovering the Alexander stratification in terms of coherent sheaves. However, to the best of our understanding, this is a purely ad-hoc construction with no relation with the $\aglC$-representation variety.

It is worth noticing that there exists in the literature several approaches to construct functorial quantum field theories for Alexander-type invariants. Apart from the groundbreaking work by Witten quantizing the Jones polynomial by means of Chern-Simons theory, perhaps the first construction of a TQFT for Alexander invariants was developed by Frohman and Nicas \cite{frohman1992alexander}. In this work, the authors constructed a functor $Z: \mathcal{C} \to \mathbb{P}\textup{\textbf{Ab}}$ from a suitable subcategory $\mathcal{C}$ of the category of $3$-dimensional bordisms to the category $ \mathbb{P}\textup{\textbf{Ab}}$ of ``projective'' abelian groups, i.e.\ the usual category of abelian groups but with morphisms defined only up to re-scaling. This functor assigns to a connected surface $\Sigma$ the homology $H_\star(\R_{\textup{U}(1)}(\pi_1(\Sigma)))$ of the associated $\textup{U}(1)$-representation variety, and is designed in such a way that it quantizes the intersection form. In this manner, this TQFT captures the Seifert matrix of a Seifert surface for a knot and thus the construction recovers the Alexander polynomial of the knot from the vacuum state associated to the bordism resulting from applying longitudinal surgery to the knot. It is worth mentioning that the use of $\textup{U}(1)$-representations is very natural in this context since $R_{\textup{U}(1)}(\pi_1(\Sigma)) = H^1(\Sigma)$ by the Hurewicz theorem.

The construction of Frohman-Nicas has been subsequently extended to more general settings. In the work \cite{bigelow2015alexander} by Bigelow, Cattabriga and Florens, the TQFT was extended to the category of tangles by quantizing the Burau representation of braids (see also \cite{long1989linear,long1989blinear}). Complementarely, Florens and Massuyeau extended in \cite{florens2016functorial} the Frohman-Nicas TQFT to the category of bordisms $W$ equipped with a representation $\pi_1(W) \to A$, for $A$ a fixed abelian group. Both constructions are essentially cohomological, being a a key tool the so-called Lescop's Alexander function \cite{lescop1998sum}, which operates by considering exterior products of the Alexander module of a knot.

The TQFT developed by Frohman-Nicas and its extensions are inherently related to the ones constructed in Theorems \ref{thm-intro:TQFT-global} and \ref{thm-intro:TQFT-fibre}. Both constructions exploit the representation theory of the manifolds involved to quantize the Alexander polynomial, making use of the restriction maps onto the boundary. However, they are also very different in essence for several reasons. First, the Frohman-Nicas TQFT only uses abelian representations, whereas the non-abelianity of the group $\aglC$ plays a fundamental role in our construction to capture the additive version of the Wirtinger relations. But, perhaps more importantly, the Frohman-Nicas construction is purely homological in nature, exploiting the homology theory of the involved manifolds, and thus linking with the Alexander polynomial through the intersection form. In sharp contrast, our TQFT is built upon subtle geometric properties of the $\aglC$-representation theory, capturing the Alexander polynomial as the singular locus of a singular vector bundle. This also implies that our TQFT can be effectively computed from purely combinatorial information, which can be used to perform explicit calculations in highly involved knots.

It is an interesting prospective work to develop a general framework able to encompass both constructions, for instance, based on pull-push quantization on the derived category of coherent sheaves, à la Fourier-Mukai, as done for instance in \cite{gonzalez2020lax,vogel2024cohomology}. This potential relation can also be envisaged through the work of Cimasoni and Turaev \cite{cimasoni2005lagrangian, cimasoni2006lagrangian}, where the authors use this homological machinery to construct another TQFT from the category of tangles into a category of ``Lagrangian relations'', which might be seen as the homological analogue of our category of spans of singular vector bundles. In fact, the construction has been recently categorified in \cite{cimasoniconway}, where the authors construct a 2-functor from the category of tangles to the category of Lagrangian cospans. In contrast to the representation-theoretic framework explored in this manuscript, these approaches are homological in nature and do not involve representation varieties; computations have so far been carried out primarily under certain restrictions, such as topologically trivial tangles.

Finally, we want to emphasize that the quantization based on functorial field theory, as the one developed in the aforementioned works, differ from the `categorification' approaches, such as Khovanov-like methods \cite{khovanovCategorification,robert2022quantum}, which capture polynomial invariants in a distinct way, namely, by constructing a homology theory whose (graded) Euler characteristic recovers the polynomial.

\subsection*{Structure of the manuscript} Section \ref{sec:2} analyzes the $\aglC$-representation variety of a knot group and defines the Alexander fibration, which plays a central role throughout the paper. Section \ref{sec:Alexmodulematrix} revisits the Alexander module from the point of view of Fox calculus. In particular, Section \ref{sec:Alexander-module-top} explores its connection with $\aglC$-representations, while its interpretation as a coherent sheaf is presented in Section \ref{sec:alexsingularvectorbundle}. 

Section \ref{sec:quandles} introduces fundamental concepts of quandle theory and examines its relationship with the fibre of the Alexander fibration. The construction of a TQFT for the Alexander module is the focus of Section \ref{sec:tanglesandalexquandles}. After reviewing the tangle category and the $\Span(\Vect_{\CC})$ categories in Sections \ref{sec:tangle-cat} and \ref{sec:span-cat}, respectively, the functor $\cA_t$ and its main properties are studied in Section \ref{sec:TQFT}. The functor is then extended to define a global TQFT of singular vector bundles over $\CC^*$ in Section \ref{sec:TQFTsingular}. 
An explicit description of the functor in terms of the generators of the tangle category is the subject of Section \ref{sec:explicitcalculations}. The manuscript concludes by analyzing the braid subcategory and the Burau representation as particular cases in Section \ref{sec:braids}.

\subsection*{Acknowledgements}

The first-named author was partially supported by the grant ``Computational, dynamical and geometrical complexity in fluid dynamics'', Ayudas Fundación BBVA a Proyectos de Investigación Científica 2021, by Bilateral AEI-DFG project: Celestial Mechanics, Hydrodynamics, and Turing Machines, and by Ministerio de Ciencia e Innovaci\'on Grant PID2021-124440NB-I00 (Spain).
The third-named author was partially supported by Project MINECO (Spain)  PID2020-118452GB-I00.  

\section{$\agl$-representations of knots} \label{sec:2}
Let $K\subset S^3$ be a knot and denote by $K^c=S^3-K$ its complement. 
One of the most important invariants associated to $K$ is its \emph{knot group}, which is the fundamental group $\G_K:=\pi_1(K^c)$. The mere information of this group is not a complete invariant (e.g.\ the square knot and the granny knot have isomorphic knot groups), but $\G_K$ together with the so-called peripheral subgroup is a complete invariant \cite{waldhausen1968irreducible}.

There exists a simple procedure to present $\G_K$ from a regular projection of $K$, the so-called Wirtinger presentation. It expresses $\G_K$ as the finitely generated group,
\begin{equation} \label{eqn:pi1presentation}
\G_K=\langle x_1,\ldots,x_n \,|\, r_1,\ldots,r_{n} \rangle,
\end{equation}
where $n$ is the number of crossings, and each $x_i$ geometrically represents a loop around an arc between crossings. Besides, each relation $r_k$ is of the form 
\begin{equation} \label{eqn:crossingequation}
\textup{(I)} \quad x_{i+1}=x_j x_ix_j^{- 1}, \qquad \textup{or} \qquad
\textup{(II)}\quad x_{i+1}=x_j^{- 1}x_ix_j\, ,
\end{equation} 
depending on the type of the crossing, which is positive or negative when the knot is oriented. 
\begin{figure}[h]
\includegraphics[width=7cm]{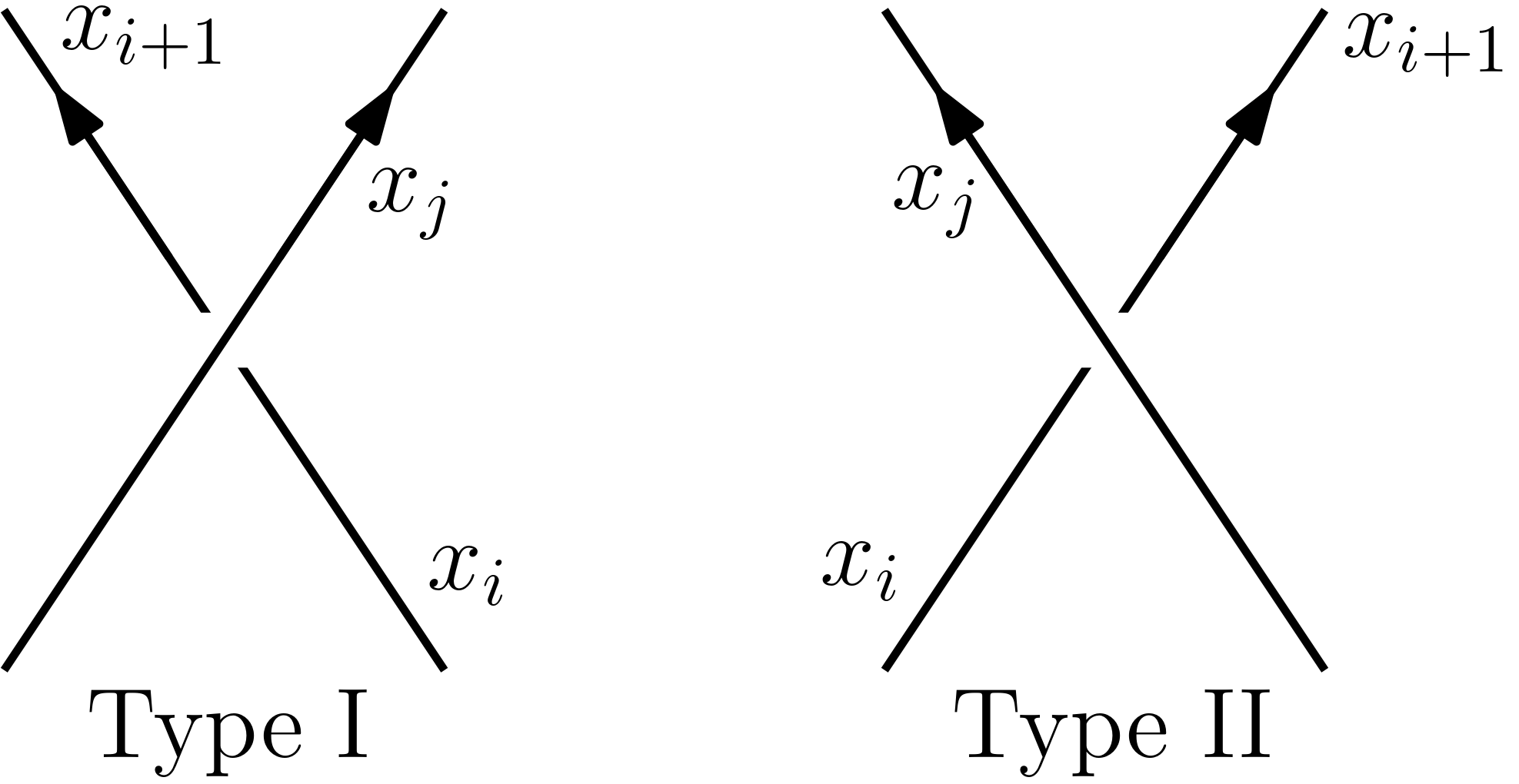}
\caption{Wirtinger generators for positive (type I) and negative (type II) crossings. \label{fig:wirtinger}}
\end{figure}
We note that in (\ref{eqn:pi1presentation}), one relation is redundant, since $r_1\cdots r_n=1$. Therefore we have
\begin{equation} \label{eqn:pi1presentation2}
\G_K=\langle x_1,\ldots,x_n \,|\, r_1,\ldots,r_{n-1} \rangle.
\end{equation}

Given a finitely presented group $\Gamma=\langle x_1,\ldots, x_n \,|\, r_1,\ldots, r_s \rangle$ and any complex affine algebraic group $G$, a \emph{representation} of $\Gamma$ into $G$ is a group homomorphism $\rho: \Gamma \rightarrow G$, which is determined by the image of the generators $A_i :=\rho(x_i) \in G$ for $1 \leq i \leq n$, subject to the relations $r_j(A_1,\ldots, A_n)$, for $1\leq j\leq s$. As a consequence, the \emph{representation variety} is the affine algebraic set
$$
\R_G(\Gamma)= \Hom(\Gamma,G)=\lbrace (A_1,\ldots,A_n)\in G^n \,|\, r_j (A_1,\ldots, A_n)=\id, 1\leq j\leq s\rbrace \subset G^n\, .
$$

Assuming that $G$ is a linear group, i.e.\ $G\subset \GL{r}(\CC)$ for some $r > 0$, we use the following classical terminology. We say that $\rho$ is
\emph{reducible} if there is some proper subspace $V \subsetneq \CC^r$ such that $\rho(\gamma)(V)\subset V$ for all $\gamma \in \Gamma$, and \emph{irreducible} otherwise.
We say that $\rho$ is \emph{abelian} if the image $\rho(\Gamma)$ is an abelian subgroup of $\GL{r}(\CC)$, and \emph{non-abelian} otherwise. 
Any abelian representation with $r\geq 2$ is automatically reducible.

In this work, we shall focus on the group $G = \aglC$ of affine transformations of the complex line. It can be described as a matrix group as
\begin{equation}\label{eqn:agldescription}
    \aglC = \left\{\left.\begin{pmatrix}
    1 & 0 \\ a & t
    \end{pmatrix} \,\right|\, a \in \CC, t \in \CC^*  \right\}.
\end{equation}
For simplicity, we will abbreviate $\agl=\aglC$. Notice that algebraically, $\agl = \CC^* \ltimes \CC$, for the action of $\CC^*$ on $\CC$ by $t \cdot a = ta$.

Of particular interest for this work will be the study of the $\agl$-representation variety of a knot group
(\ref{eqn:pi1presentation2}), that is, the space
$$
\R_{\agl}(K) := \R_{\agl}(\G_K)= \lbrace (A_1,\ldots,A_n) \in (\agl)^n \,|\, r_j(A_1,\ldots,A_n)=\id, 1\leq j\leq n-1 \rbrace,
$$
where $r_j$ are the Wirtinger relations (\ref{eqn:crossingequation}). Since the generators of the knot group represent meridians and the Wirtinger relations are described by conjugacy relations, it is straightforward that the traces of the images of the generators satisfy $\tr(A_i)=\tr(A_j)$ for all $1 \leq i,j \leq n$. As a consequence, if we write
\begin{equation}\label{eqn:matricesinagl}
A_i=\begin{pmatrix} 1 & 0 \\ a_i & t_i \end{pmatrix},\qquad a_i \in \CC \textrm{ and } t_i \in \CC^* ,
\end{equation}
we get that $t:=t_i$ is constant for all $i$ and therefore $ \R_{\agl}(K) \subset \CC^* \times \CC^{n}$, the latter space parametrized by $(t, a_1,\ldots,a_n)$. Besides, each relation of type (I), $A_jA_iA_j^{-1}=A_{i+1}$, is equivalent to the equation $a_j+ta_i-ta_j=a_{i+1}$, which we rewrite as
\begin{equation}\label{eqn:typeIlineareq}
ta_i+(1-t)a_j-a_{i+1}=0.
\end{equation}
Analogously, each relation of type (II), $A_j^{-1}A_iA_j=A_{i+1}$, yields an analogous equation $-a_jt^{-1}+a_j+t^{-1}a_i=a_{i+1}$, equivalent to
\begin{equation} \label{eqn:typeIIlineareq}
t^{-1}a_i+(1-t^{-1})a_j-a_{i+1}=0.
\end{equation}
This provides an explicit description of $\R_{\agl}(K)$ from a planar projection of $K$.

We note that equations \eqref{eqn:typeIlineareq} and \eqref{eqn:typeIIlineareq} are linear in $\CC[t,t^{-1}]$. As a consequence, we may consider the following fibration that plays a central role in this work. Notice that the diagonal entry subgroup $\CC^* \subset \agl$ defines a restriction regular morphism $\pi_K: \R_{\agl}(K) \to \R_{\CC^{\ast}}(K)$, but this latter space is naturally isomorphic to $\CC^*$ since $\R_{\CC^{\ast}}(K) = \Hom(\Gamma_K, \CC^\ast) = \Hom(H_1(K^c), \CC^\ast) = \CC^\ast$, as $H_1(K^c) = \ZZ$ is generated by the meridian.

\begin{defn}
The regular morphism
\begin{equation} \label{eqn:representationfibration}
\pi_K: \R_{\agl}(K) \longrightarrow \CC^{\ast}
\end{equation}
is called the \emph{Alexander fibration}.
\end{defn}

Notice that the fibre $\mathcal{S}_t := \pi_K^{-1}(t)$ of the Alexander fibration over $t \in \CC^*$ is a complex vector space given by the set of solutions $(a_1,\ldots, a_n) \in \CC^n$ to the linear system of equations described in \eqref{eqn:typeIlineareq} and \eqref{eqn:typeIIlineareq}, one for each crossing.

In this context, abelian $\agl$-representations of a knot can be fully identified since they are parametrized by $\CC^{\ast}\times \CC$. Indeed, any abelian representation factors through the abelianization $H_1(K^c)\cong \ZZ$, so it is defined by the image of any meridian, $\rho(x_i)$, which is determined by a single element $(t,a)\in \CC^{\ast}\times \CC = \agl$.

Regarding non-abelian affine representations of $\G_K$, the following fundamental theorem originally proven by de Rham and Burde in \cite[Section 6.1]{Cooper1994Planecurves} plays a fundamental role. We restate it in terms of the fibration given in \eqref{eqn:representationfibration}, and we will re-prove it in Section \ref{sec:Alexander-module-top}.

\begin{thm}[Burde--de Rham]\label{thm:2.2}
 There exists a non-abelian representation $\rho: \G_K \to \agl$ such that $\pi_K(\rho)=t$ if and only if $\Delta_K(t)=0$, where $\Delta_K(t)$ denotes the Alexander polynomial of the knot $K$.   
\end{thm}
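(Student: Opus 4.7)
The plan is to identify the fibre $\mathcal{S}_t$ of the Alexander fibration with the kernel of the Alexander matrix of the Wirtinger presentation evaluated at $t$, and then extract the vanishing of $\Delta_K(t)$ from the rank of this matrix.

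First, I would locate the abelian $\agl$-representations inside $\mathcal{S}_t$. Any abelian representation $\rho: \G_K \to \agl$ factors through $H_1(K^c) = \ZZ$ and is thus determined by the image of a single meridian; in the parametrization~\eqref{eqn:matricesinagl} this forces $a_1 = \cdots = a_n$. For $t \neq 1$ the converse also holds, since $\left(\begin{smallmatrix} 1 & 0 \\ a & t \end{smallmatrix}\right)$ and $\left(\begin{smallmatrix} 1 & 0 \\ b & t \end{smallmatrix}\right)$ commute if and only if $a = b$. Moreover, the diagonal line $L := \{(a, \ldots, a) : a \in \CC\}$ is always contained in $\mathcal{S}_t$, as the coefficient row-sums of \eqref{eqn:typeIlineareq} and \eqref{eqn:typeIIlineareq} both vanish identically. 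Consequently, a non-abelian representation over $t$ exists if and only if $\dim \mathcal{S}_t \geq 2$ (the case $t = 1$ is vacuously compatible with the theorem since $\Delta_K(1) = \pm 1 \neq 0$).

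Second, writing $\mathcal{S}_t = \ker M(t)$ for the $(n-1) \times n$ coefficient matrix $M(t)$ of the linear system \eqref{eqn:typeIlineareq}--\eqref{eqn:typeIIlineareq}, I would identify $M(t)$ with the Alexander matrix of the Wirtinger presentation~\eqref{eqn:pi1presentation2}. A direct Fox-calculus computation on $r_k = x_j x_i x_j^{-1} x_{i+1}^{-1}$ (type I), followed by the abelianization $x_i \mapsto t$, produces precisely the row $(t, 1-t, -1)$ in columns $(i, j, i+1)$ of~\eqref{eqn:typeIlineareq}, and analogously for type II. Since $L \subset \ker M(t)$ for all $t$, we have $\rk M(t) \leq n-1$, and a non-abelian representation exists if and only if $\rk M(t) \leq n-2$, equivalently, all $(n-1) \times (n-1)$ minors of $M(t)$ vanish at $t$.

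To conclude, I would invoke the classical fact that for the Wirtinger presentation the $n$ minors of $M(t)$ obtained by deleting one column are all associates in $\CC[t, t^{-1}]$ of the Alexander polynomial $\Delta_K(t)$; since these $n$ minors generate the ideal of all $(n-1) \times (n-1)$ minors of $M(t)$, their simultaneous vanishing at $t$ is equivalent to $\Delta_K(t) = 0$. The main obstacle is precisely this last step: proving that the column-deleted minors of the Wirtinger Alexander matrix are all associates of $\Delta_K(t)$. This rests on the redundancy $r_1 \cdots r_n = 1$ in the Wirtinger presentation, which dictates a specific dependency among the rows of the full Alexander matrix, combined with the identification of the resulting ideal with the first Fitting ideal of the Alexander module. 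Once this classical identification is in hand, the rest of the argument is elementary linear algebra on the defining system of the fibre $\mathcal{S}_t$.
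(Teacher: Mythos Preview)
Your proposal is correct and follows essentially the same approach as the paper: both identify the fibre $\mathcal{S}_t$ with the kernel of the Alexander matrix $A_K(t)$ via the Fox-calculus computation on the Wirtinger relators (equations \eqref{eqn:FoxdertypeI}--\eqref{eqn:FoxdertypeII} matching \eqref{eqn:typeIlineareq}--\eqref{eqn:typeIIlineareq}), and then read off that the fibre jumps in dimension exactly at the zeros of $\Delta_K$. Your proposal adds slightly more explicit detail on the abelian locus (the diagonal line $L$) and the $t=1$ case, but the argument is the one the paper uses to reprove the theorem via Corollary~\ref{cor:Alexanderfibration}.
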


In other words, the exceptional locus of the Alexander fibration $\pi_K:\R_{\agl}(K)\rightarrow \CC^{\ast}$ is precisely the algebraic variety defined by the Alexander polynomial, $V(\Delta_K)\subset \CC^{\ast}$, which is a collection of points. For any other $t\in \CC^{\ast}-V(\Delta_K)$, the only representations are abelian and parametrized by $a\in \CC$.

\section{The Alexander module and the Alexander matrix} \label{sec:Alexmodulematrix}

In order to provide a clearer picture of the fibration \eqref{eqn:representationfibration}, 
we recall here some elementary facts regarding the Alexander polynomial construction from the Alexander module and its connection with Fox calculus. 
We refer to \cite{BurdeZieschangbook} for further details. 

\subsection{Fox calculus for the Alexander module}\label{sec:Alexander-module-top}

Throughout this section, let us fix a knot $K \subset S^3$ with complement $K^c = S^3 - K$. The commutator subgroup $[\pi_1(K^c),\pi_1(K^c)] \triangleleft \pi_1(K^c)$ defines an infinite cyclic covering $K^c_{\infty} \rightarrow K^c$, with deck group isomorphic to $H_1(K^c)=\pi_1(K^c)/[\pi_1(K^c),\pi_1(K^c)]\cong \mathbb{Z}=\langle t \rangle$, which is usually called the \emph{infinite cyclic covering} of $K^c$. The cyclic action of $t$ endows each homology group $H_k(K^c_{\infty})$ with a $\mathbb{Z}[t,t^{-1}]$-module structure, giving rise to a collection of modules that are invariants of the knot, the \emph{Alexander modules} of $K$.

The (first) \emph{Alexander module} is the finitely presented $\ZZ[t,t^{-1}]$-module $H_1(K^c_{\infty})$ and the \emph{Alexander matrix} $A_K(t)$ is the $(n-1)\times n$-matrix, with coefficients in $\ZZ[t,t^{-1}]$, associated to any presentation of the module, where two Alexander matrices are declared equivalent if they present isomorphic modules.

Associated to the Alexander matrix $A_K(t)$, we may consider its \emph{elementary ideals} $E_K^k \subset \ZZ[t,t^{-1}]$, $k\in \ZZ$, defined as
$$
E_K^k =\begin{cases} 0, & \text{if } k \leq 0,  \\ \text{ideal generated by the $(n-k)\times(n-k)$-minors of $A_K$}, & \text{if } 0<k\leq n-1, \\
\ZZ[t,t^{-1}], & \text{if } k\geq n. \end{cases}
$$
Elementary ideals are invariants of $A(t)$, hence invariants of the knot, and they form an ascending chain
\begin{equation} \label{eqn:elementaryideals}
0=E_K^0\subset E_K^1 \subset \ldots \subset E_K^{n} =\ZZ[t,t^{-1}].
\end{equation}

The $k$\emph{-th Alexander polynomial} $\Delta_K^k(t)$ of the knot $K$ is defined as the greatest common divisor of the elements of the ideal 
$E_K^k$, defined up to units $\pm t^k$, since $\ZZ[t,t^{-1}]$ is a unique factorization ring. In other words, $\Delta_K^k(t)$ is the generator of the smallest principal ideal containing $E_K^k$. Clearly, the first elementary ideal $E_K^1$ is a principal ideal, 
generated by a single polynomial $\Delta_K^1(t)$, also denoted by $\Delta_K(t)$, the \emph{Alexander polynomial}, that appeared in Theorem \ref{thm:2.2}.

Fox Calculus provides a systematic way to obtain an Alexander matrix $A(t)$ for any knot $K$. If $\Free{n}=\langle x_1,\ldots, x_n \rangle$ denotes the free group on $n$ generators, the \emph{Fox derivative} is the $\ZZ$-module map $\frac{\partial}{\partial x_i}: \ZZ[\Free{n}] \rightarrow \ZZ[\Free{n}]$ defined by the following properties:
\begin{enumerate}
    \item $\frac{\partial 1}{\partial x_i}=0$, 
    \item $\frac{\partial x_j}{\partial x_i}=\delta_{ij}$, 
    \item $\frac{\partial w_1w_2}{\partial x_i}=\frac{\partial w_1}{\partial x_i}
    +w_1\frac{\partial w_2}{\partial x_i}$, for words $w_1,w_2 \in F_n$, 
\end{enumerate}
and we extend linearly to $\ZZ[\Free{n}]$.
In particular, $\frac{\partial wx_j}{\partial x_i}=\frac{\partial w}{\partial x_i}+w\delta_{ij}$
and $\frac{\partial x_jw}{\partial x_i}=\delta_{ij}+x_j\frac{\partial w}{\partial x_i}$, for any word $w\in \Free{n}$ and any generators $x_i,x_j$.
Also one can compute $\frac{\partial x_i^{-1}}{\partial x_i}=-x_i^{-1}$.

Now, given a finitely generated group $\Gamma=\langle x_1,\ldots, x_n \,|\, r_1,\ldots, r_s \rangle$, we consider the $\ZZ$-module maps $p: \ZZ[\Free{n}] \rightarrow \ZZ[\Gamma]$ and $H: \ZZ[\Gamma] \rightarrow \ZZ[\Gamma^{ab}]$, induced by the projection map $p:F_n\rightarrow \Gamma$ and the abelianization map $H: \Gamma \rightarrow \Gamma^{ab}$, where $\Gamma^{ab}$ denotes the abelianization of $\Gamma$. 
Denoting $\{w\}=(H\circ p)(w)$, the Alexander matrix of $\Gamma$ is the matrix with coefficients in $\ZZ[\Gamma^{ab}]=\ZZ[t,t^{-1}]$ given by
$$
A_K=(a_{ij})= \left(  \left\{ \frac{\partial r_i}{\partial x_j} \right\} \right).
$$

When $\G=\G_K$ is a knot group (\ref{eqn:pi1presentation2}), we obtain a $(n-1)\times n$-matrix with coefficients in $\ZZ[t,t^{-1}]$, that is precisely an Alexander matrix for the Alexander module $H_1(K^c_{\infty})$. This is due to the fact that Fox derivatives represent the boundary maps for a suitable chain complex for $K^c_{\infty}$, 
and taking into account the $\ZZ[t,t^{-1}]$-module structure on $H_1(K^c_{\infty})$. 

Now, consider a knot group with Wirtinger presentation $\G_K=\pi_1(K^c)=
\langle x_1,\ldots, x_n \,|\, r_1,\ldots, r_{n-1} \rangle$ where relations are given in \eqref{eqn:crossingequation}. 
The map $(H\circ p(x_i))=\{x_i\}=t$.
Let us consider a Wirtinger relator $w=x_jx_ix_j^{-1}x_{i+1}^{-1}$. Its Fox derivatives are given by
\begin{equation} \label{eqn:FoxdertypeI}
 \left\{ \frac{\partial w}{\partial x_i}\right\}=t, \quad 
 \left\{\frac{\partial w}{\partial x_j}\right\}=1-t, \quad 
 \left\{\frac{\partial w}{\partial x_{i+1}}\right\}=-1. 
\end{equation}
Analogously, if we take a relator $w=x_j^{-1}x_ix_jx_{i+1}^{-1}$, we obtain that
\begin{equation} \label{eqn:FoxdertypeII}
 \left\{\frac{\partial w}{\partial x_i}\right\}=t^{-1}, \quad 
 \left\{\frac{\partial w}{\partial x_j}\right\}=1-t^{-1}, \quad 
 \left\{\frac{\partial w}{\partial x_{i+1}}\right\}=-1. 
\end{equation}

\cite[Chapter 11]{Kauffmanonknotsbook} that $H_1(K^c_{\infty})$ is the $\ZZ[t,t^{-1}]$-module generated by $[x_i], i=1,\ldots, n$, under the abelianized Wirtinger relations:
\begin{align}
  t[x_i]+(1-t)[x_j]-[x_{i+1}]  & =0, \label{eqn:XinftytypeIrel} \\
  t^{-1}[x_i]+(1-t^{-1})[x_j]-[x_{i+1}] & = 0, \label{eqn:XinftytypeIIrel}
\end{align}
according to the Wirtinger presentation of the knot. As it is also stated in \cite[section 11.2]{Kauffmanonknotsbook}, the linear relations 
\eqref{eqn:XinftytypeIrel}, \eqref{eqn:XinftytypeIIrel}  in $\ZZ[t,t^{-1}]$ 
of the Alexander module are precisely the ones arising in any affine representation, from \eqref{eqn:typeIlineareq} and \eqref{eqn:typeIIlineareq}. 

\begin{cor}\label{cor:Writinger-presentation-fox}
Given a knot group $\G_K=\pi_1(K^c)$ with a Wirtinger presentation (\ref{eqn:pi1presentation2}),
and a representation $\rho: \G_K \rightarrow \agl$, 
the Fox derivatives of a word $w\in \G_K$ are determined by the off-diagonal entry $b$ of $\rho(w)\in \agl$.
\end{cor}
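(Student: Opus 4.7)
The plan is to prove by induction on word length the explicit formula
$$
b(w) = \sum_{i=1}^{n}\left\{\frac{\partial w}{\partial x_i}\right\} a_i,
$$
where $\rho(x_i) = \bigl(\begin{smallmatrix} 1 & 0 \\ a_i & t \end{smallmatrix}\bigr)$ and we write $\rho(w) = \bigl(\begin{smallmatrix} 1 & 0 \\ b(w) & \{w\} \end{smallmatrix}\bigr)$. The corollary follows at once from this identity, since it presents $b(w)$ as a $\ZZ[t,t^{-1}]$-linear form in the off-diagonal entries $a_1,\dots,a_n$ whose coefficients are exactly the Fox derivatives $\{\partial w/\partial x_i\}$; treating the $a_i$ as independent parameters (the universal $\agl$-representation) then allows one to recover each $\{\partial w/\partial x_i\}$ by reading off the coefficient of $a_i$ in $b(w)$.

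For the base cases, $w = x_j$ is immediate from $\{\partial x_j/\partial x_i\} = \delta_{ij}$, while $w = x_j^{-1}$ follows by inverting $\rho(x_j)$, which yields $b(x_j^{-1}) = -t^{-1} a_j$; this matches the Fox side after applying the Leibniz rule to $x_j^{-1} x_j = 1$, giving $\{\partial x_j^{-1}/\partial x_i\} = -t^{-1}\delta_{ij}$. The inductive step rests on the coincidence between multiplication in $\agl$ and the Leibniz rule for Fox derivatives. Direct matrix multiplication produces
$$
\rho(w_1 w_2) = \begin{pmatrix} 1 & 0 \\ b(w_1) + \{w_1\}\, b(w_2) & \{w_1\}\{w_2\} \end{pmatrix},
$$
so that $b(w_1 w_2) = b(w_1) + \{w_1\}\, b(w_2)$. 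On the other hand, projecting the Fox product rule under $H\circ p$ yields $\{\partial(w_1 w_2)/\partial x_i\} = \{\partial w_1/\partial x_i\} + \{w_1\}\{\partial w_2/\partial x_i\}$. Multiplying by $a_i$, summing over $i$, and invoking the inductive hypothesis for $w_1$ and $w_2$ closes the induction.

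There is no genuine obstacle here, since the map $w \mapsto b(w)$ is, tautologically, a derivation of exactly the shape controlled by Fox calculus, and the two quantities are forced to coincide once the base cases match. The only subtlety is keeping $\{w\}$ in step with the diagonal entry of $\rho(w)$; this is automatic because each Wirtinger meridian $x_j$ maps to $t$ both under the diagonal projection $\agl \to \CC^{\ast}$ and under the abelianization map $H\circ p: \ZZ[\Gamma_K] \to \ZZ[t,t^{-1}]$, which is precisely the compatibility noted in Section \ref{sec:2}.
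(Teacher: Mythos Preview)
Your proof is correct and follows the same underlying idea as the paper, namely that the off-diagonal entry in $\agl$ satisfies the very derivation rule $b(w_1w_2)=b(w_1)+\{w_1\}b(w_2)$ that defines Fox calculus. The paper does not give a formal proof of this corollary at all: it simply juxtaposes equations \eqref{eqn:typeIlineareq}--\eqref{eqn:typeIIlineareq} with \eqref{eqn:FoxdertypeI}--\eqref{eqn:FoxdertypeII} for the Wirtinger relators, states the corollary, and then illustrates it with a single example. Your inductive argument makes this rigorous for arbitrary words rather than only the relators, so it is in fact a more complete treatment of the same observation.
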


\begin{ex}
Let $\Gamma_K$ be a knot group and consider a word in its first two generators, 
$w=x_1^2x_2x_1^{-1}$. Applying the properties that define the Fox derivative, it is straightforward to verify that 
$$
 \left\{ \frac{\partial w}{\partial x_1} \right\} =1+t-t^2, \qquad 
 \left\{ \frac{\partial w}{\partial x_2} \right\} = t^2.
$$
Now, let us take $\rho:\G_K \rightarrow \agl$, where $\rho(x_i)=\left(\begin{smallmatrix} 1 & 0 \\ b_i & t \end{smallmatrix} \right)$. We get
$$
\rho(w)=\begin{pmatrix}
1 & 0 \\ (1+ t- t^2)b_1 + t^2b_2 & t^2
\end{pmatrix},
$$
from which both Fox derivatives can be extracted.
\end{ex}

\subsection{$\agl$-representation varieties and the Alexander module}

Given a complex algebraic variety $X$, we denote its sheaf of regular algebraic functions by $\mathcal{O}_X$. In particular, its coordinate ring of global regular functions is $\CC[X] = \mathcal{O}_{X}(X)$.
If we consider $X = \R_{\agl}(K)$, notice that the morphism defining the Alexander fibration 
$$
\pi_K: \R_{\agl}(K) = \Spec \CC[\R_{\agl}(K)] \to \R_{\CC^*}(K) = \Spec \CC[t, t^{-1}]
$$ 
induces a $\CC$-algebra morphism
$$
    \pi_K^*: \CC[t, t^{-1}] \to \CC[\R_{\agl}(K)],
$$
which endows the coordinate ring $\CC[\R_{\agl}(K)]$ with a $\CC[t, t^{-1}]$-algebra structure. The discussion of Section \ref{sec:Alexander-module-top} and Corollary \ref{cor:Writinger-presentation-fox} implies that we can find a presentation of $\CC[\R_{\agl}(K)]$ as $\CC[t, t^{-1}]$-module that makes it isomorphic to the symmetric algebra of the Alexander module. However, we provide here a direct proof of this fact for completeness.

\begin{thm}\label{thm:coordinate-ring-agl1}
For any knot $K$, the coordinate ring $\CC[\R_{\agl}(K)]$ is isomorphic as $\CC[t, t^{-1}]$-algebra to the symmetric algebra $\Sym_{\CC[t,t^{-1}]} \left(H_1(K^c_\infty)\right)$ of the first Alexander module.
\end{thm}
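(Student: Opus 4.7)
My plan is to prove the theorem by exhibiting explicit $\CC[t,t^{-1}]$-algebra presentations of both sides that coincide tautologically, and then matching generators to generators.

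First, I would unpack the coordinate ring $\CC[\R_{\agl}(K)]$. Starting from a Wirtinger presentation (\ref{eqn:pi1presentation2}), a point of $\R_{\agl}(K)$ is a tuple $(t, a_1, \ldots, a_n) \in \CC^* \times \CC^n$ subject to the linear equations (\ref{eqn:typeIlineareq}) and (\ref{eqn:typeIIlineareq}), one per crossing. Consequently, as a $\CC[t,t^{-1}]$-algebra,
$$
\CC[\R_{\agl}(K)] \,\cong\, \CC[t,t^{-1}][a_1, \ldots, a_n] / (L_1, \ldots, L_{n-1}),
$$
where each $L_k$ is a degree-one polynomial in the $a_i$ encoding the $k$-th Wirtinger relation, with coefficients drawn from $\{t, 1-t, -1\}$ or $\{t^{-1}, 1-t^{-1}, -1\}$ according to the crossing type.

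Second, I would describe the symmetric algebra side via Fox calculus. Using (\ref{eqn:FoxdertypeI})--(\ref{eqn:FoxdertypeII}) together with (\ref{eqn:XinftytypeIrel})--(\ref{eqn:XinftytypeIIrel}), the Alexander module, after scalar extension from $\ZZ[t,t^{-1}]$ to $\CC[t,t^{-1}]$, has the finite presentation
$$
H_1(K^c_\infty) \otimes_{\ZZ[t,t^{-1}]} \CC[t,t^{-1}] \,\cong\, \CC[t,t^{-1}]^{\,n} / \langle \ell_1, \ldots, \ell_{n-1}\rangle,
$$
where the generators are $[x_1], \ldots, [x_n]$ and each $\ell_k$ is the abelianized Wirtinger relation. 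The universal property of the symmetric algebra then yields
$$
\Sym_{\CC[t,t^{-1}]}\bigl(H_1(K^c_\infty)\bigr) \,\cong\, \CC[t,t^{-1}][e_1, \ldots, e_n] / (\ell_1, \ldots, \ell_{n-1}),
$$
with $e_i$ the image of $[x_i]$.

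The last step is to observe that, under the identification $e_i \leftrightarrow a_i$, one has $\ell_k = L_k$ term by term; this is precisely the content of Corollary \ref{cor:Writinger-presentation-fox}, combined with the coincidence (\ref{eqn:typeIlineareq}) $\leftrightarrow$ (\ref{eqn:XinftytypeIrel}) and (\ref{eqn:typeIIlineareq}) $\leftrightarrow$ (\ref{eqn:XinftytypeIIrel}) already highlighted in Section \ref{sec:Alexander-module-top}. Hence $e_i \mapsto a_i$ induces the required $\CC[t,t^{-1}]$-algebra isomorphism. I do not foresee a deep obstacle: the argument is essentially a translation once the two presentations are lined up. The only mild bookkeeping point I would want to verify is that dropping the redundant relator $r_n$ in the Wirtinger presentation is compatible with both sides; since $r_1 \cdots r_n = 1$ in $\Free{n}$, the Leibniz rule for Fox derivatives forces the same $\CC[t,t^{-1}]$-linear dependence among the $\ell_k$ and among the $L_k$, so removing the $n$-th relation has the same effect in either presentation.
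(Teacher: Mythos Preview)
Your proof is correct, and indeed the paper explicitly remarks just before the theorem that this presentation-matching argument is ``implied'' by the Fox-calculus discussion and Corollary~\ref{cor:Writinger-presentation-fox}; it then chooses to give an alternative proof ``for completeness.'' The paper's own argument is intrinsic rather than presentation-based: it constructs a natural $\CC[t,t^{-1}]$-linear map $\Phi_K: H_1(K^c_\infty) \to \CC[\R_{\agl}(K)]$ by sending $\gamma$ to the regular function $\rho \mapsto \varpi(\textup{Ab}(\rho|_{\Gamma_K'})(\gamma))$ (the off-diagonal entry of the induced representation of the infinite cyclic cover), extends to the symmetric algebra, and then argues injectivity and surjectivity directly. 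Your route is more elementary and completely transparent once the two presentations are written down; its cost is that the isomorphism depends a priori on the chosen Wirtinger diagram, and one must appeal to the invariance of the Alexander module and of the representation variety separately to know the result is canonical. The paper's route buys a coordinate-free map that is manifestly independent of any presentation, at the price of a slightly more delicate verification of bijectivity. One small point worth flagging in your write-up: the equality $\CC[\R_{\agl}(K)] = \CC[t,t^{-1}][a_1,\ldots,a_n]/(L_1,\ldots,L_{n-1})$ is on the nose only if one reads $\R_{\agl}(K)$ scheme-theoretically (as the paper implicitly does later, writing $\R_{\agl}(K) = \mathbf{Spec}(\Sym\,\mathcal{M}_K)$); if one insists on the reduced variety, you would additionally need to know that the ideal $(L_1,\ldots,L_{n-1})$ is radical.
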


\begin{proof}
Let $\Gamma_K$ be the knot group of $K$ and let $\Gamma_K' = \ker(\textup{Ab}_K)$ be the kernel of the abelianization map $\textup{Ab}_K: \Gamma_K \to H_1(K^c)$, which corresponds with the fundamental group of the cyclic cover $K^c_\infty$. In this manner, any representation $\rho: \Gamma_K \to \agl$ restricts to a representation $\rho|_{\Gamma_K'}: \Gamma_K'\to \agl$. But, furthermore, since the commutator subgroup of $\agl$ is abelian, such representation descends to the abelianization $\Gamma_K'/[\Gamma_K', \Gamma_K'] = H_1(K^c_\infty)$, leading to a representation
$$
    \textup{Ab}(\rho|_{\Gamma_K'}): H_1(K^c_\infty) \to \agl.
$$

In this way, if $\varpi: \agl \to \CC$ denotes the projection onto the off-diagonal entry, we construct a map
$$
\Phi_K: H_1(K^c_\infty) \longrightarrow \CC[\R_{\agl}(K)],
$$
that sends any $\gamma \in H_1(K^c_\infty)$ to the regular function $\textup{ev}_\gamma: \R_{\agl}(K) \to \CC$, defined as $\textup{ev}_\gamma(\rho) = \varpi \left(\textup{Ab}(\rho|_{\Gamma_K'})(\gamma)\right)$ for any $\rho \in \R_{\agl}(K)$. Notice that $\Phi_K$ is a monomorphism since $\R_{\agl}(K)$ is a separated algebraic variety.

It is straightforward to check that $\Phi_K$ is actually a morphism of $\CC[t, t^{-1}]$-modules and, therefore, it descends to a monomorphism of $\CC[t, t^{-1}]$-algebras
$$
\tilde{\Phi}_K: \Sym_{\CC[t,t^{-1}]} \left(H_1(K^c_\infty)\right) \longrightarrow  \CC[\R_{\agl}(K)].
$$
Furthermore, choose generators $\gamma_1, \ldots, \gamma_n$ of $\ker(\textup{Ab}_K)$ and complement them with a meridian $m$ to get a set of generators of $\Gamma_K$. Denote by
$$
    \rho(\gamma_i) = \begin{pmatrix}
        1 & 0 \\ a_i & t_i
    \end{pmatrix}, \qquad \rho(m) = \begin{pmatrix}
        1 & 0 \\ b & t
    \end{pmatrix},
$$
their images under a representation $\rho \in \R_{\agl}(K)$, for $1 \leq i \leq n$. 
We have that the functions $a_1, \ldots, a_n: \R_{\agl}(K) \to \CC$ generate the coordinate ring of $\R_{\agl}(K)$ as $\CC[t, t^{-1}]$-algebra, whereas the functions $t_1, \ldots, t_n$ and $b$ are algebraically dependent. 
This shows that $\tilde{\Phi}_K$ is surjective, as $a_i = \tilde{\Phi}_K(\gamma_i)$, for $1 \leq i \leq n$, by construction.
\end{proof}

\begin{ex}
Consider the trefoil knot $K$, whose Wirtinger presentation gives a description of the fundamental group of its complement
$$
    \Gamma_K = \langle x_1, x_2, x_3 \,|\, x_1 = x_2x_3x_2^{-1}, \, x_2 = x_3x_1x_3^{-1}, \, x_3 = x_1x_2x_1^{-1}\rangle.
$$
In this manner, the first Alexander module of $K$ is
$$
    H_1(K_\infty^c) = \frac{\CC[x_1, x_2, x_3]}{\left(tx_3+(1-t)x_2 - x_1, tx_1+(1-t)x_3 - x_2, tx_2+(1-t)x_1 - x_3\right)}
$$

On the other hand, for the representation variety $\R_{\agl}(K)$, given a representation $\rho: \Gamma_K \to \agl$, let us denote the images of the generators by
$$
    A_1 = \rho(x_1) = \begin{pmatrix}
        1 & 0 \\
        a_1 & t
    \end{pmatrix}, \quad A_2 = \rho(x_2) = \begin{pmatrix}
        1 & 0 \\
        a_2 & t
    \end{pmatrix}, \quad A_3 = \rho(x_3) = \begin{pmatrix}
        1 & 0 \\
        a_3 & t
    \end{pmatrix}.
$$
Then, a direct calculation shows that
$$
A_2A_3A_2^{-1} = \begin{pmatrix}
        1 & 0 \\
        ta_3+(1-t)a_2 & t
    \end{pmatrix}, \quad A_3A_1A_3^{-1} = \begin{pmatrix}
        1 & 0 \\
        ta_1+(1-t)a_3 & t
    \end{pmatrix},
$$
$$
    A_1A_2A_1^{-1} = \begin{pmatrix}
        1 & 0 \\
        ta_2+(1-t)a_1 & t
    \end{pmatrix}.
$$
Therefore, we have that 
$$
\R_{\agl}(K) = \left\{(a_1, a_2, a_3, t) \in \CC^3 \times \CC^*\,|\, a_1 = ta_3+(1-t)a_2, \, a_2 = ta_1+(1-t)a_3, \, a_3 = ta_1+(1-t)a_3\right\}.
$$
Thus, the coordinate ring $\CC[\R_{\agl}(K)]$ coincides with the symmetric algebra of the Alexander module by setting $a_i \mapsto x_i$ for $i = 1, 2, 3$.
\end{ex}

Furthermore, using the Alexander matrix $A_K$ to give a presentation of the Alexander module, we get the following result regarding the Alexander fibration, which in particular reproves Theorem \ref{thm:2.2}. 

\begin{cor} \label{cor:Alexanderfibration}
For any knot $K$ and any $t \in \CC^*$, the fibre $\mathcal{S}_t$ of the Alexander fibration  
$\pi_K: \R_{\agl}(K) \to \CC^*$ given in \eqref{eqn:representationfibration}, 
 is the set of solutions in $\CC^n$ of the linear system defined by the Alexander matrix of $K$, 
$$
\mathcal{S}_t=\pi^{-1}_K(t)=\lbrace a=(a_1,\ldots,a_n)\in \CC^n \,|\, A_K(t)\,a=0 \rbrace.
$$ 
In particular, the exceptional locus of $\pi_K$ is given by the roots of the Alexander polynomial.
\end{cor}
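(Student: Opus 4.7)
The plan is to read off the corollary directly from the identifications already assembled in Sections \ref{sec:2} and \ref{sec:Alexmodulematrix}, without introducing new machinery. In essence, the proof is a matter of observing that three linear systems over $\CC$ coincide: the Wirtinger equations \eqref{eqn:typeIlineareq}--\eqref{eqn:typeIIlineareq} cutting out $\R_{\agl}(K)$ inside $\CC^* \times \CC^n$, the abelianized Wirtinger relations \eqref{eqn:XinftytypeIrel}--\eqref{eqn:XinftytypeIIrel} presenting the Alexander module, and the linear system $A_K(t)\,a = 0$ obtained via Fox calculus from \eqref{eqn:FoxdertypeI}--\eqref{eqn:FoxdertypeII}.

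First, I would fix $t \in \CC^*$ and describe the fibre $\mathcal{S}_t = \pi_K^{-1}(t)$ explicitly. By the construction of the Alexander fibration in Section \ref{sec:2}, points of $\mathcal{S}_t$ are tuples $(a_1, \ldots, a_n) \in \CC^n$ satisfying one equation per crossing: either \eqref{eqn:typeIlineareq} for a type (I) crossing or \eqref{eqn:typeIIlineareq} for a type (II) crossing. I would then compare these crossing equations term-by-term with the Fox derivatives \eqref{eqn:FoxdertypeI} and \eqref{eqn:FoxdertypeII}. The coefficients $t$, $1-t$, $-1$ (respectively $t^{-1}$, $1-t^{-1}$, $-1$) match exactly, so the row of $A_K(t)$ associated to the $k$-th Wirtinger relator coincides with the linear form defining the $k$-th equation of the fibre. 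This yields the set-theoretic equality $\mathcal{S}_t = \{a \in \CC^n \mid A_K(t)\,a = 0\}$.

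For the second assertion, I would use that the Alexander fibration has $n-1$ equations in $n$ unknowns, so $\dim \ker A_K(t) \geq 1$ for every $t \in \CC^*$; this minimal one-dimensional fibre corresponds precisely to the family of abelian representations already identified in Section \ref{sec:2}. The fibre strictly exceeds this generic dimension exactly when $\rk A_K(t) < n-1$, i.e.\ when every $(n-1)\times(n-1)$ minor of $A_K(t)$ vanishes. By the definition of the first elementary ideal $E_K^1$ in \eqref{eqn:elementaryideals} and of $\Delta_K(t)$ as its generator, this locus is exactly $V(\Delta_K) \subset \CC^*$. Hence the exceptional locus of $\pi_K$ is $V(\Delta_K)$, recovering Theorem \ref{thm:2.2}.

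There is no serious obstacle here: the content is a dictionary between Wirtinger coefficients and Fox derivatives. The only point that deserves a careful wording is the passage from ``the fibre jumps'' to ``the Alexander polynomial vanishes'', which I would handle by invoking the definition of $\Delta_K$ as the gcd of $(n-1)\times(n-1)$ minors, so that the vanishing of all such minors over $\CC$ is equivalent to $\Delta_K(t) = 0$. Alternatively, and more conceptually, one can deduce the fibre description from Theorem \ref{thm:coordinate-ring-agl1}: restricting the symmetric algebra to the fibre over $t$ amounts to tensoring the Alexander module $H_1(K^c_\infty)$ with the residue field $\CC[t,t^{-1}]/(t-t_0)$, and dualising yields precisely $\ker A_K(t_0)$.
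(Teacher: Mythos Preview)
Your proposal is correct and follows essentially the same route as the paper: the corollary is stated without a separate proof, relying on the identification already made in the text that the linear relations \eqref{eqn:XinftytypeIrel}--\eqref{eqn:XinftytypeIIrel} presenting the Alexander module coincide with the equations \eqref{eqn:typeIlineareq}--\eqref{eqn:typeIIlineareq} cutting out $\R_{\agl}(K)$, together with the lead-in remark that Theorem~\ref{thm:coordinate-ring-agl1} and the Alexander-matrix presentation yield the result. Your explicit term-by-term matching of the Fox derivatives with the crossing coefficients, and your rank argument for the exceptional locus via $E_K^1$, simply spell out what the paper leaves implicit.
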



Explicitly, the chain of elementary ideals \eqref{eqn:elementaryideals} of the Alexander module yields a stratification of the exceptional locus (collections of points in $\CC^{\ast}$), since
\begin{equation}\label{eq:Alexander-stratification}
\emptyset=V(E_K^{n}) \subset V(E_K^{n-1}) \subset \ldots \subset V(E_K^2)\subset V(E_K^1)=V(\Delta_K) \subset V(E_K^0)=\CC^{\ast},
\end{equation}
where $V(E_K^k)$ is the zero locus of the $k$-th Alexander polynomial $\Delta_K^k$.

From the definition of the elementary ideals we obtain
\begin{cor} \label{cor:excfiber}
Let $t\in \CC^{\ast}$ such that $t\in V(E_K^k) - V(E_K^{k+1})$. Then $\pi_K^{-1}(t) \cong \CC^{k+1}$ for all $k=0,1,\ldots, n-1$.
\end{cor}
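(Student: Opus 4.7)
The plan is to combine Corollary \ref{cor:Alexanderfibration}, which identifies the fibre $\pi_K^{-1}(t)$ with the kernel of the evaluated Alexander matrix $A_K(t)\colon \CC^n \to \CC^{n-1}$, with the standard rank–nullity formula. The key observation is that the elementary ideals $E_K^k$ are designed precisely to detect rank drops of $A_K(t)$, so the stratification \eqref{eq:Alexander-stratification} of $\CC^{\ast}$ is nothing but the rank stratification of the linear map $A_K(t)$.

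First, I would unwind the definition of $V(E_K^k)$. By definition, $E_K^k$ is generated by the $(n-k)\times(n-k)$ minors of $A_K$ for $1\leq k \leq n-1$, so after evaluating at $t\in\CC^{\ast}$, membership $t\in V(E_K^k)$ is equivalent to all such minors vanishing, i.e.\ $\rk A_K(t) < n-k$. Dually, $t\notin V(E_K^{k+1})$ means that some $(n-k-1)\times(n-k-1)$ minor of $A_K(t)$ is nonzero, i.e.\ $\rk A_K(t)\geq n-k-1$. Putting these together gives $\rk A_K(t) = n-k-1$, exactly.

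Second, by Corollary \ref{cor:Alexanderfibration} the fibre $\mathcal{S}_t = \pi_K^{-1}(t)$ is the $\CC$-linear solution space of $A_K(t)\,a = 0$ in $\CC^n$. Applying rank–nullity to the $\CC$-linear map $A_K(t)\colon \CC^n \to \CC^{n-1}$ yields
\begin{equation*}
    \dim_{\CC} \mathcal{S}_t \;=\; n - \rk A_K(t) \;=\; n - (n-k-1) \;=\; k+1,
\end{equation*}
so $\pi_K^{-1}(t) \cong \CC^{k+1}$ as claimed.

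Finally, I would check the boundary cases consistent with the conventions $E_K^0 = 0$ and $E_K^n = \ZZ[t,t^{-1}]$: for $k=0$, the condition $t\notin V(E_K^1) = V(\Delta_K)$ forces $\rk A_K(t) = n-1$ and recovers the abelian locus $\pi_K^{-1}(t)\cong\CC$; for $k=n-1$, the set $V(E_K^n)$ is empty, so the condition degenerates to $t\in V(E_K^{n-1})$, giving $A_K(t)=0$ and $\pi_K^{-1}(t)\cong\CC^n$. There is no real obstacle here beyond bookkeeping the indexing conventions; the substance of the corollary is entirely captured by reading elementary ideals as rank loci.
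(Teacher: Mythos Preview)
Your proof is correct and matches the paper's approach: the paper simply states that the corollary follows ``from the definition of the elementary ideals'', leaving implicit exactly the rank-nullity computation you spell out. Your argument is a faithful expansion of that one-line justification.
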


\begin{rem}
From Corollary \ref{cor:excfiber}, we can explicitly compute the virtual class of the $\agl$-representa\-tion variety of a knot in the Grothendieck ring of algebraic varieties. 
We proceed by stratifying $\R_{\agl}(K)$ in terms of the type of representation. Notice that the abelian representations give a global contribution of $\LL(\LL-1)$, where $\LL = [\mathbb{A}^1_\CC]$ is the Lefschetz motive. Over $\CC^{\ast}-V(\Delta_K)$, the only 
$\agl$-representations are abelian, thus they contribute $\LL\left([V(E_K^0)]-[V(E_K^1]\right)= \LL(\LL-1-\sum_k \left| S_K^k \right|)$, where $S_K^k=V(E_K^k)-V(E_K^{k+1})$. Besides, Corollary \ref{cor:excfiber} provides a natural stratification of the singular locus. If $t\in V(E_K^k) -  V(E_K^{k+1})$, we obtain a contribution of $\left| S_K^k \right| \LL^{k+1}$, already accounting for abelian representations. Thus, the virtual class is
$$
[\R_{\agl}(K)]= \LL\left(\LL-1-\sum_{k=1}^{n-1} \left| S_K^k \right| \right)+ \sum_{k=1}^{n-1} \left| S_K^k\right| \LL^{k+1}=\LL(\LL-1)+ \sum_{k=1}^{n-1} \left| S_K^k\right| (\LL^{k+1}-\LL) .
$$
\end{rem}





\subsection{The Alexander fibration as a singular vector bundle} \label{sec:alexsingularvectorbundle}

Let $K \subset S^3$ be a knot. In this section, we are going to show that the Alexander fibration
$$
    \pi_K: \R_{\agl}(K) \to \R_{\CC^*}(K) = \CC^*
$$
can be seen as a singular vector bundle on $\CC^*$, and we recover the Alexander module from it.

To do so, recall that a coherent sheaf $\mathcal{M}$ over an algebraic variety $X$ is a finitely presented sheaf of $\mathcal{O}_X$-modules, that is
it can be written as the cokernel of a map $\mathcal{O}_X^m \stackrel{g}{\longrightarrow} \mathcal{O}_X^n$.
We can construct an associated geometric space as
$$
    E = \mathbf{Spec}(\Sym_{\mathcal{O}_X}(\mathcal{M})),
$$
where $\Sym_{\mathcal{O}_X}(\mathcal{M})$ is the symmetric algebra of $\mathcal{M}$ as $\mathcal{O}_X$-module and $\mathbf{Spec}$ is the relative spectrum. Observe that $E$ comes equipped with a regular morphism $\pi: E \to X$.
In the case that $\mathcal{M}$ is a locally free sheaf of rank $n$, $E$ is a vector bundle of rank $n$ over $X$. 
However, if $\mathcal{M}$ is not locally free, then its stalks have varying (finite) rank.
As $\mathcal{M}$ is a coherent sheaf, dualizing the exact sequence $\mathcal{O}_X^m \to \mathcal{O}_X^n \to \mathcal{M} \to 0$,
we see that $E$ is the kernel of a vector bundle homomorphism $g:E_1\to E_2$ between two vector bundles $E_1,E_2$ over $X$. This means that the fibre
of $E$ over $p\in X$, $E_p=\pi^{-1}(p)=\ker(g_p:E_{1,p}\to E_{2,p})$ is a complex vector space, but the dimensions of $E_p$ depend on $p\in X$.
We baptise these objects as singular vector bundles, according to the following definition.

\begin{defn}\label{defn:singular-vector-bundle}
    A \emph{singular vector bundle} over an algebraic variety $X$ is a variety $E$ with a regular morphism $\pi: E \to X$ such that there exists an open covering $\{U_\alpha\}$ of $X$ and injective morphisms $\psi_\alpha: \pi^{-1}(U_\alpha) \to U_\alpha \times \mathbb{C}^n$ 
    making the following diagram commutative
    $$
\xymatrix{
E_\alpha:=\pi^{-1}(U_\alpha)  \ar[rr]^{\psi_\alpha} \ar[rd] & & U_\alpha \times \mathbb{C}^n\ar[ld]^{\pi} \\
& U_\alpha &
}
    $$    
    and such that the changes of charts are fibrewise linear. 
    A morphism between singular vector bundles $\pi: E \to X$ and $\pi': E' \to X'$ is a pair $(f, \tilde{f})$ of regular morphisms $f: X \to X'$ and $\tilde{f}: E \to E'$ such that the following diagram commutes
        $$
\xymatrix{
E \ar[r]^{\tilde{f}}\ar[d]_{\pi} & E'\ar[d]^{\pi'} \\
X \ar[r]_{f} & X'
}
    $$    
    and satisfying that $\tilde{f}$ is fibrewise linear.
\end{defn}


\begin{rem}
    The term `singular' vector bundles is not standard in the literature, and sometimes they are just called \emph{geometric coherent sheaves}. They are completely analogous to regular vector bundles and, in particular, they admit the typical constructions such as fibrewise direct sums and tensor products. But they also admit kernels, actually forming an abelian category. 
\end{rem}

It is worth mentioning that the functor $\mathcal{M} \mapsto E = \mathbf{Spec}(\Sym_{\mathcal{O}_X}(\mathcal{M}))$ from coherent sheaves to singular vector bundles is inverse to the functor that takes a singular vector bundle $E$ and considers its sheaf of sections, which is a coherent sheaf.

Returning to our setting, let us consider the sheaf of sections of the Alexander fibration $\pi_K: \R_{\agl}(K) \to \CC^*$, denoted by $\mathcal{M}_K$ and given by
$$
	\mathcal{M}_K(U) = \left\{s: U \to \R_{\agl}(K) \textrm{ regular}\,|\, \pi_K \circ s = \id_U \right\},
$$
for any (Zariski) open set $U \subset \CC^*$. It is straightforward to check that $\mathcal{M}_K$ is a sheaf of complex vector spaces. Furthermore, since the fibre $\pi_K^{-1}(t)$ is a complex vector space for each $t \in \CC^*$, this sheaf has a natural $\mathcal{O}_{\CC^*}$-module structure, where $\mathcal{O}_{\CC^*}$ is the structure sheaf of regular functions on $\CC^*$. 

\begin{prop}\label{prop:coherent}
For any knot $K$, $\mathcal{M}_K$ is a coherent sheaf on $\CC^*$.
\end{prop}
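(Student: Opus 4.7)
The plan is to reduce the statement to commutative algebra via Theorem \ref{thm:coordinate-ring-agl1}. Set $R = \CC[t,t^{-1}]$ and write $M = H_1(K^c_\infty)$ for the Alexander module, so that the cited theorem gives $\CC[\R_{\agl}(K)] \cong \Sym_R(M)$, i.e.\ $\R_{\agl}(K) \cong \mathbf{Spec}(\Sym_{\mathcal{O}_{\CC^*}} \tilde M)$ as a scheme over $\CC^*$, where $\tilde M$ denotes the quasi-coherent sheaf on $\CC^*$ associated to $M$.

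The first step would be to identify $\mathcal{M}_K$ with the dual sheaf of $\tilde M$. Combining the universal property of the relative spectrum with the adjunction between symmetric algebras and modules, a section $s: U \to \R_{\agl}(K)$ of $\pi_K$ over a Zariski open $U \subset \CC^*$ corresponds bijectively to an $\mathcal{O}_U$-algebra morphism $\Sym(\tilde M|_U) \to \mathcal{O}_U$, which in turn is the same datum as an $\mathcal{O}_U$-linear map $\tilde M|_U \to \mathcal{O}_U$. This provides a natural isomorphism $\mathcal{M}_K \cong \mathcal{H}om_{\mathcal{O}_{\CC^*}}(\tilde M, \mathcal{O}_{\CC^*})$ of sheaves on $\CC^*$.

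For the second step, I would exploit the explicit finite presentation of $M$ furnished by the Alexander matrix, namely the exact sequence $R^{n-1} \xrightarrow{A_K^{T}} R^n \to M \to 0$. Sheafifying this and applying the left exact contravariant functor $\mathcal{H}om(-, \mathcal{O}_{\CC^*})$ yields an exact sequence
$$
0 \longrightarrow \mathcal{M}_K \longrightarrow \mathcal{O}_{\CC^*}^{\,n} \xrightarrow{\,A_K\,} \mathcal{O}_{\CC^*}^{\,n-1},
$$
which exhibits $\mathcal{M}_K$ as the kernel of a morphism between free sheaves of finite rank on the Noetherian scheme $\CC^*$. Such a kernel is automatically coherent, which completes the argument.

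I do not anticipate any real obstacle. The proof is essentially formal once Theorem \ref{thm:coordinate-ring-agl1} is in hand; the only minor care needed concerns the bookkeeping of conventions (in particular, whether the Alexander matrix presents $M$ as $A_K$ or $A_K^{T}$) and in correctly matching the geometric notion of a section of $\pi_K$ with the algebraic notion of an $R$-module homomorphism $M \to \mathcal{O}(U)$, both of which are standard.
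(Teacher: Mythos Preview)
Your argument is correct. Both you and the paper start from the same observation, namely that $\mathcal{M}_K(\Spec A)$ is the set of $\CC$-algebra maps $\CC[\R_{\agl}(K)]\to A$ splitting $\pi_K^*$. The paper stops almost immediately after this, asserting that this set is a finitely generated $A$-module (implicitly using that $\R_{\agl}(K)$ is cut out of $\CC^*\times\CC^n$ by equations linear in the $a_i$, so a section is just an $n$-tuple in $A^n$ satisfying those linear relations). You instead push the identification one step further via Theorem~\ref{thm:coordinate-ring-agl1} and the $\Sym$/forgetful adjunction, obtaining $\mathcal{M}_K \cong \mathcal{H}om_{\mathcal{O}_{\CC^*}}(\tilde M,\mathcal{O}_{\CC^*})$, and then dualise the Alexander-matrix presentation to exhibit $\mathcal{M}_K$ as a kernel of a map between free sheaves. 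Your route is slightly longer but yields more: the explicit kernel sequence $0\to\mathcal{M}_K\to\mathcal{O}_{\CC^*}^{\,n}\xrightarrow{A_K}\mathcal{O}_{\CC^*}^{\,n-1}$ is exactly the realisation of $\R_{\agl}(K)$ as a singular vector bundle described in the discussion preceding Definition~\ref{defn:singular-vector-bundle}, and it makes the fibre description of Corollary~\ref{cor:Alexanderfibration} transparent.
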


\begin{proof}
On an affine open set $U = \Spec A \subset \CC^*$ we have
$$
    \mathcal{M}_K(\Spec A) = \left\{s^*: \CC[\R_{\agl}(K)] \to A \,|\, s^* \circ \pi_K^* = \id_A \right\} \subset \Hom_{\CC\textrm{-\textbf{Alg}}}(\CC[\R_{\agl}(K)], A),
$$
where $\CC[\R_{\agl}(K)]$ is the coordinate ring of $\R_{\agl}(K)$. Since the morphism $\pi_K^*: A \to \CC[\R_{\agl}(K)]$ is determined by the inclusion $\CC[t,t^{-1}] \hookrightarrow \CC[\R_{\agl}(K)]$, this is a finitely generated $A$-module, implying that $\mathcal{M}_K$ is coherent.
\end{proof}

\begin{cor}  
    For any knot $K$, the $\agl$-representation variety coincides with the singular vector bundle $\R_\agl(K) = \mathbf{Spec}(\Sym_{\mathcal{O}_{\CC^*}}(\mathcal{M}_K))$ associated to the coherent sheaf $\mathcal{M}_K$.
\end{cor}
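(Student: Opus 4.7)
The plan is to reduce the identification to Theorem \ref{thm:coordinate-ring-agl1} via a sheafification argument on $\CC^*$. First, invoking that theorem gives an isomorphism of $\CC[t,t^{-1}]$-algebras
$$
\CC[\R_{\agl}(K)] \;\cong\; \Sym_{\CC[t,t^{-1}]}\bigl(H_1(K^c_\infty)\bigr).
$$
Since $\CC^* = \Spec \CC[t,t^{-1}]$ is affine and the Alexander module is finitely presented, applying the quasi-coherent sheafification functor and then taking relative $\mathbf{Spec}$ yields an isomorphism of $\CC^*$-schemes
$$
\R_{\agl}(K) \;\cong\; \mathbf{Spec}\bigl(\Sym_{\mathcal{O}_{\CC^*}}\!\widetilde{H_1(K^c_\infty)}\bigr),
$$
where the tilde denotes the coherent $\mathcal{O}_{\CC^*}$-module associated with the Alexander module.

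It then remains to identify $\widetilde{H_1(K^c_\infty)}$ with $\mathcal{M}_K$. By the remark preceding the corollary, the functor $\mathcal{M}\mapsto \mathbf{Spec}(\Sym_{\mathcal{O}_X}\mathcal{M})$ is inverse to the sheaf-of-sections functor between coherent sheaves on $X=\CC^*$ and singular vector bundles over $X$. Applying this inverse to the displayed isomorphism above produces $\mathcal{M}_K \cong \widetilde{H_1(K^c_\infty)}$, and substituting back closes the argument.

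The delicate step will be making the identification $\mathcal{M}_K \cong \widetilde{H_1(K^c_\infty)}$ intrinsic in this non-locally-free setting. Concretely, on an affine open $U = \Spec A \subset \CC^*$, the sheaf $\mathcal{M}_K(U)$ defined in Proposition \ref{prop:coherent} parametrises $\CC[t,t^{-1}]$-algebra morphisms $\CC[\R_{\agl}(K)] \to A$, which by the universal property of the symmetric algebra correspond to $\CC[t,t^{-1}]$-linear maps out of $H_1(K^c_\infty)$. One must therefore argue that this characterisation recovers the Alexander module sheaf itself rather than a linear dual of it. I expect to resolve this by returning to the proof of Theorem \ref{thm:coordinate-ring-agl1}, where the Wirtinger coordinates $a_1,\ldots,a_n$ on $\R_{\agl}(K)$ are simultaneously the fibre coordinates of $\pi_K$ and the images of the generators $[x_1],\ldots,[x_n]$ of $H_1(K^c_\infty)$, so that the two sheaves can be matched on an affine cover of $\CC^*$ and the inverse-functor claim can be verified directly at the level of the presentation.
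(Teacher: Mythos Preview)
Your argument is correct but takes a longer route than necessary. The paper's implicit proof is immediate: $\mathcal{M}_K$ is \emph{defined} as the sheaf of sections of $\pi_K: \R_{\agl}(K) \to \CC^*$, and the remark preceding the corollary asserts that the functors $\mathcal{M} \mapsto \mathbf{Spec}(\Sym_{\mathcal{O}_X}(\mathcal{M}))$ and ``sheaf of sections'' are mutually inverse, so one simply reads off $\mathbf{Spec}(\Sym_{\mathcal{O}_{\CC^*}}(\mathcal{M}_K)) = \R_{\agl}(K)$ with no appeal to the Alexander module or to Theorem \ref{thm:coordinate-ring-agl1}. Your detour through $\widetilde{H_1(K^c_\infty)}$ ultimately invokes the very same inverse-functor claim (to identify $\mathcal{M}_K$ with $\widetilde{H_1(K^c_\infty)}$), so you are effectively applying the functor, then its inverse, then the functor again; the first two steps cancel. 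What your approach does buy is that it makes the identification $\mathcal{M}_K \cong \widetilde{H_1(K^c_\infty)}$ explicit along the way, which is essentially the content of the subsequent Theorem \ref{thm:Alexander-coherent}; but for the corollary as stated that is superfluous. Your closing paragraph on the duality subtlety is a legitimate worry in general --- the inverse-functor claim does sweep a dual under the rug in the non-locally-free setting --- but since the paper treats that claim as a black box, the corollary follows without unpacking it.
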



Furthermore, as shown in \cite[Proposition II.5.1]{Hartshorne}, every coherent sheaf is uniquely determined by the finitely generated module $M_K = \mathcal{M}_K(\CC^*)$ of global sections over the ring $\mathcal{O}_{\CC^*}(\CC^*) = \CC[t, t^{-1}]$. In fact, since $M_K$ is isomorphic to the ring of regular functions on $\R_{\agl}(K)$, by Theorem \ref{thm:coordinate-ring-agl1} we get the following result.


\begin{thm}\label{thm:Alexander-coherent}
$M_K$ is isomorphic to the symmetric algebra of the Alexander module of the knot $K$, as $\CC[t, t^{-1}]$-algebras.
\end{thm}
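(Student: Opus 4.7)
The plan is to reduce the statement to Theorem \ref{thm:coordinate-ring-agl1} via the correspondence between singular vector bundles over an affine base and finitely generated modules over the base coordinate ring. The argument proceeds in two steps: first, identify $M_K$ (with its natural $\CC[t,t^{-1}]$-algebra structure) with the coordinate ring $\CC[\R_{\agl}(K)]$; second, invoke Theorem \ref{thm:coordinate-ring-agl1} to match this ring with $\Sym_{\CC[t,t^{-1}]}(H_1(K^c_\infty))$.

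For the first step, I would exploit the fact that $\CC^* = \Spec \CC[t,t^{-1}]$ is affine, so by the equivalence cited before the theorem (Hartshorne, Prop.\ II.5.1) the coherent sheaf $\mathcal{M}_K$ from Proposition \ref{prop:coherent} is entirely determined by $M_K$. Combining this with the correspondence between coherent sheaves and singular vector bundles reviewed in Section \ref{sec:alexsingularvectorbundle} --- which sends $\mathcal{M}$ to $\mathbf{Spec}(\Sym_{\mathcal{O}_X}(\mathcal{M}))$ --- and taking global sections over the affine base $\CC^*$, one obtains a canonical $\CC[t,t^{-1}]$-algebra isomorphism
$$
\CC[\R_{\agl}(K)] \;\cong\; \Sym_{\CC[t,t^{-1}]}(M_K),
$$
which realizes the coordinate ring of the total space as the symmetric algebra of $M_K$; this is precisely the identification advertised in the sentence preceding the theorem. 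Theorem \ref{thm:coordinate-ring-agl1} then supplies a second $\CC[t,t^{-1}]$-algebra isomorphism $\CC[\R_{\agl}(K)] \cong \Sym_{\CC[t,t^{-1}]}(H_1(K^c_\infty))$, and chaining the two produces the claimed algebra isomorphism $M_K \cong \Sym_{\CC[t,t^{-1}]}(H_1(K^c_\infty))$, after identifying $M_K$ with the coordinate ring as explained above.

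The main subtlety I anticipate is ensuring that the $\CC[t,t^{-1}]$-algebra structure on $M_K$ featuring in the theorem statement coincides with the one inherited from its realization as the coordinate ring of $\R_{\agl}(K)$, and not merely the degree-one module structure coming from the sheaf of sections. This can be verified by tracing through the explicit map $\tilde{\Phi}_K$ constructed in the proof of Theorem \ref{thm:coordinate-ring-agl1}: on Wirtinger generators $\gamma_i$, the off-diagonal evaluation map corresponds exactly to the fibrewise-linear coordinates of the singular vector bundle $\R_{\agl}(K) \to \CC^*$, so the two algebra structures match on generators and hence globally, upgrading the module-level data of $M_K$ to the full symmetric-algebra identification demanded by the statement.
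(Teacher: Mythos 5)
There is a genuine gap, and it lies in the final deduction. Your argument establishes two isomorphisms: first $\CC[\R_{\agl}(K)] \cong \Sym_{\CC[t,t^{-1}]}(M_K)$ (from the relative-Spec description of the singular vector bundle over the affine base $\CC^*$), and second $\CC[\R_{\agl}(K)] \cong \Sym_{\CC[t,t^{-1}]}\left(H_1(K^c_\infty)\right)$ (Theorem \ref{thm:coordinate-ring-agl1}). Chaining these yields $\Sym_{\CC[t,t^{-1}]}(M_K) \cong \Sym_{\CC[t,t^{-1}]}\left(H_1(K^c_\infty)\right)$, which is \emph{not} the statement of the theorem; if anything, were the isomorphism graded, it would suggest $M_K \cong H_1(K^c_\infty)$, i.e.\ that $M_K$ is the Alexander module itself rather than its symmetric algebra. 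To land on the stated conclusion you silently switch to a different identification, namely $M_K \cong \CC[\R_{\agl}(K)]$, when you say you are \emph{identifying $M_K$ with the coordinate ring as explained above}. But what you explained above was $\Sym_{\CC[t,t^{-1}]}(M_K) \cong \CC[\R_{\agl}(K)]$, which is incompatible with $M_K \cong \CC[\R_{\agl}(K)]$ for any nontrivial module: $M_K$ cannot simultaneously be the coordinate ring and a module whose symmetric algebra is the coordinate ring. The proof as written equivocates between these two identifications, and neither one alone delivers the theorem along your route.

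The paper's own argument is the direct chain $M_K \cong \CC[\R_{\agl}(K)] \cong \Sym_{\CC[t,t^{-1}]}\left(H_1(K^c_\infty)\right)$: it identifies the global sections $M_K$ with the ring of regular functions on the total space (the degree-collapsing identification, stated in the sentence preceding the theorem) and then invokes Theorem \ref{thm:coordinate-ring-agl1}; no symmetric algebra of $M_K$ ever appears. To repair your proof you must commit to one identification: either adopt the paper's $M_K \cong \CC[\R_{\agl}(K)]$ and justify it directly (in which case your first step should be discarded), or keep your first step and accept that it proves a different statement. Your closing paragraph about matching algebra structures via $\tilde{\Phi}_K$ addresses a legitimate concern but does not resolve this inconsistency, since the problem is not which algebra structure $M_K$ carries but which object $M_K$ is being equated with.
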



\begin{rem}
    The coherent sheaf $\mathcal{M}_K$ has been previously studied by Hironaka in \cite{hironaka1997alexander} in a different context, with an ad hoc construction by means of $1$-cocycles. In this direction, the Alexander fibration (\ref{eq:Alexander-stratification}) coincides with the one studied in \cite{hironaka1997alexander}.  
\end{rem}


\subsection{Alexander data in a broader context}\label{sec:alexander-broader}

Consider any finitely generated group $\Gamma$. In this context, we can also consider the Alexander fibration
$$
    \pi_\Gamma: \R_{\agl}(\Gamma) \to \R_{\CC^*}(\G)
$$
induced by the projection homomorphism $\mu: \agl = \CC \rtimes\CC^* \to \CC^*$. Indeed, fixed $t \in \CC^*$, we have that $\mu^{-1}(t) \subset \agl$ is naturally isomorphic to $\CC$, 
and in particular it has a natural vector space structure. This implies that, for any $\tau \in \R_{\CC^*}(\Gamma)$, the fibre $\pi_\Gamma^{-1}(\tau) \subset \R_{\agl}(\Gamma)$ is naturally a vector space. Indeed, given $\rho_1, \rho_2 \in \pi_\Gamma^{-1}(\tau)$ and $\lambda \in \CC$, the representation $\rho_1 + \lambda \rho_2 \in \pi_\Gamma^{-1}(\tau)$ is given, for $\gamma \in \Gamma$, by
$$
    (\rho_1 + \lambda \rho_2)(\gamma) = (a_1 + \lambda a_2, t),
$$
where $\rho_1(\gamma) = (a_1, t) \in \agl$ and $\rho_2(\gamma) = (a_2, t) \in \agl$. Notice that the last components of $\rho_1(\gamma)$ and $\rho_2(\gamma)$ agree since $t = \tau(\gamma)$.

This fact anticipates that the Alexander fibration $\pi_\Gamma$ exhibits $\R_{\agl}(\Gamma)$ as a singular vector bundle over $\R_{\CC^*}(\Gamma)$. To be precise, let us consider the sheaf $\mathcal{M}_\Gamma$ over $\R_{\CC^*}(\Gamma)$ of sections of $\pi_\Gamma$, that is
$$
	\mathcal{M}_\Gamma(U) = \left\{s: U \to \R_{\agl}(\Gamma) \textrm{ regular}\,|\, \pi_\Gamma \circ s = \id_U \right\},
$$
for any open set $U \subset \R_{\CC^*}(\Gamma)$. Since each fibre of $\pi_\Gamma$ is a vector space, $\mathcal{M}_\Gamma$ is a $\mathcal{O}_{\R_{\CC^*}(\Gamma)}$-module. Furthermore, arguing completely analogously to the proof of Proposition \ref{prop:coherent}, we get the following result.

\begin{prop}\label{prop:coherent-general}
    For any finitely generated group $\Gamma$, the sheaf $\mathcal{M}_\Gamma$ is coherent.
\end{prop}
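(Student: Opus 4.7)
My plan is to follow the template of Proposition \ref{prop:coherent}, extending essentially formally from knots to arbitrary finitely generated $\Gamma$. Fix a finite presentation $\Gamma = \langle x_1, \ldots, x_n \,|\, r_1, \ldots, r_s\rangle$, realizing both $\R_{\agl}(\Gamma) \subset (\agl)^n$ and $\R_{\CC^*}(\Gamma) \subset (\CC^*)^n$ as affine algebraic varieties, with $\pi_\Gamma$ the restriction of the projection $(\agl)^n \to (\CC^*)^n$. Write $B = \CC[\R_{\agl}(\Gamma)]$ and $R = \CC[\R_{\CC^*}(\Gamma)]$, so $\pi_\Gamma^* : R \to B$ is a morphism of finitely generated $\CC$-algebras and $B$ is a finitely generated $R$-algebra.

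First I would describe an explicit set of $R$-algebra generators of $B$. For each generator $x_i$ of $\Gamma$, denote by $\alpha_i \in B$ the function returning the lower-left entry of $\rho(x_i)$, and by $\tau_i$ the one returning its diagonal entry; the $\tau_i$'s lie in $R$, while the $\alpha_i$'s generate $B$ over $R$. Using the multiplication law $(a, t)(a', t') = (a + t a', t t')$ in $\agl$, an induction on word length shows that for every $w \in \Gamma$ the lower-left entry of $\rho(w)$ is an $R$-linear combination of $\alpha_1, \ldots, \alpha_n$. Applying this to each relation $r_j$, the vanishing of the lower-left entry of $\rho(r_j)$ yields a finite system of $R$-linear equations, whose coefficient matrix $L \in M_{s \times n}(R)$ has entries the Fox derivatives $\{\partial r_j/\partial x_i\}$ (compare Section \ref{sec:Alexander-module-top}).

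Next, given an affine open $U = \Spec A \subset \R_{\CC^*}(\Gamma)$, a section $s: U \to \R_{\agl}(\Gamma)$ of $\pi_\Gamma$ corresponds to an $R$-algebra splitting $s^* : B \to A$ of $\pi_\Gamma^*$, and is therefore determined by the tuple $(s^*(\alpha_1), \ldots, s^*(\alpha_n)) \in A^n$. The fibrewise addition and scalar multiplication described prior to the statement correspond, via this identification, precisely to the $A$-module operations on $A^n$. Consequently
$$
\mathcal{M}_\Gamma(U) \;\cong\; \ker\bigl( L \otimes_R A : A^n \to A^s \bigr),
$$
so $\mathcal{M}_\Gamma|_U$ is the sheaf associated to a finitely presented $A$-module. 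This description is compatible with further restriction to smaller affine opens, since $L$ simply base-changes along localization, so $\mathcal{M}_\Gamma$ is a coherent $\mathcal{O}_{\R_{\CC^*}(\Gamma)}$-module.

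The main obstacle, in my view, is the careful bookkeeping that the fibrewise vector space structure on $\pi_\Gamma^{-1}(\tau)$ really matches componentwise addition in $A^n$ under the identification through $\alpha_1, \ldots, \alpha_n$. This however reduces to the observation that adding two representations with a common $\tau$-component amounts to adding their lower-left entries coordinate by coordinate, which is exactly addition in $A^n$. Once this is unpacked, the Noetherian argument used in Proposition \ref{prop:coherent} applies verbatim.
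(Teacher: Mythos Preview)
Your argument is correct and follows exactly the approach the paper intends: the paper's own proof is simply ``argue completely analogously to the proof of Proposition~\ref{prop:coherent}'', and that proposition in turn identifies sections over $\Spec A$ with $R$-algebra splittings $B\to A$ and asserts these form a finitely generated $A$-module. Your version is in fact more complete than the paper's sketch: you make explicit that $B$ is generated over $R$ by the off-diagonal coordinates $\alpha_i$ subject to $R$-\emph{linear} relations coming from Fox derivatives, and hence identify $\mathcal{M}_\Gamma$ with the kernel sheaf of the explicit matrix $L$, whose coherence then follows from Noetherianity of $R$ (and exactness of localization for the compatibility with restriction).
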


Even more, we have that $\R_{\agl}(\Gamma) = \textbf{Spec}(\Sym_{\mathcal{O}_{\R_{\CC^*}(\Gamma)}}(\mathcal{M}_\Gamma))$, showing that the Alexander fibration $\pi_K: \R_{\agl}(\Gamma) \to \R_{\CC^*}(\Gamma)$ is a singular vector bundle.

Now, let us focus on the case where $\Gamma$ is a link group. To be precise, we fix a link $L \subset S^3$ and consider the corresponding link group $\Gamma_L = \pi_1(L^c)$, where $L^c = S^3 - L$ is the link complement. With this, for any complex affine algebraic group $G$, we can form the associated representation variety
$$
    \R_G(L) := \R_{G}(\G_L) = \Hom(\Gamma_L, G).
$$

For the Alexander fibration, observe that in this case we have that $H_1(L^c) = \ZZ^s$, where $s$ is the number of components of $L$. Therefore, $\R_{\CC^*}(L) = \Hom(H_1(L^c), \CC^*) = (\CC^*)^s$ and, in this situation, the Alexander fibration leads to a singular vector bundle
\begin{equation} \label{eqn:representationfibration-link}
    \pi_L: \R_{\agl}(L) \longrightarrow (\CC^{\ast})^s.
\end{equation}

Using this information, we can stratify $(\CC^*)^s$ according to the rank of the fibres, giving a filtration into closed subvarieties
\begin{equation}\label{eq:Alexander-stratification-link}
\emptyset=Z_L^n \subset Z_L^{n-1} \subset \ldots \subset Z_L^2 \subset Z_L^1 \subset 
Z_L^0 = (\CC^*)^s\, .
\end{equation}
In this manner, we can also define the multivariable Alexander polynomial $\Delta_L(t_1, \ldots, t_s) 
\in \CC[t_1, \ldots, t_s]$ to be the polynomial defining the hypersurface $Z_L^1$. Unlike the knot case, there is no analogue of $\Delta_L^k$ for $k>1$ because the subvarieties $Z_L^k$ are not defined by a single polynomial.

\section{Quandles} \label{sec:quandles}

In this section, we recall the theory of quandles, with a view towards the knot quandle as defined by Joyce in \cite{Joycequandle}. Roughly speaking, a quandle captures the operation of conjugation in a group, forgetting about the rest of the group structure. As we will show, the quandle theory is very effective to describe the Alexander fibration.

\begin{defn}
A \emph{quandle} is a set $Q$ equipped with a binary operation $\triangleright: Q\times Q \longrightarrow Q$ satisfying the following properties:
\begin{itemize}
\item[(P1)] $x\triangleright x= x$, for all $x\in Q$.
\item[(P2)] For all $x,z\in Q$, there exists a unique $y\in Q$ such that $x\triangleright y=z$. This defines another operation $\triangleleft$ given by $z \triangleleft x=y$.
\item[(P3)] $ x\triangleright (y\triangleright z) = (x\triangleright y)\triangleright (x\triangleright z)$, for all $x,y,z\in Q$.
\end{itemize}
\end{defn}
A quandle is said to be complex linear algebraic if it is a quandle object in the category of complex affine varieties, i.e.\ if $Q$ is a complex affine variety and the operations $\triangleright$ and $\triangleleft$ are regular morphisms.

A quandle homomorphism between quandles $(Q,\triangleright_Q),(Q',\triangleright_{Q'})$ is a map $f:Q\rightarrow Q$ preserving the quandle operation, i.e.\ $f(x\triangleright_Q y)=f(x)\triangleright_{Q'}f(y)$ for all $x,y \in Q$. This defines a category of quandles, denoted by $\Qdl$.

\begin{rem}
Given any set $S$, the \emph{trivial quandle} is $S$ with the binary operation $x\triangleright y=y$ for all $x,y\in S$.
\end{rem}

A way of constructing a quandle is to start with a group $G$. Then, on the set of elements of $G$, we define a quandle operation using conjugation by
$$
	a\triangleright b= aba^{-1}, \qquad \textup{for }a, b \in G.
$$
In this case, we have that $b \triangleleft a = a^{-1}ba$. This quandle structure on $G$ is usually referred to as the \emph{group quandle} or the \emph{conjugation quandle}, and denoted by $\Conj(G)$. Notice that, when $G$ is abelian, the quandle structure is the trivial one. Reciprocally, given a quandle $Q$, we can define the group associated to $Q$ as
$$
	\As(Q) = \langle a \in Q \,|\, a\triangleright b= aba^{-1}, \textrm{ for $a, b \in Q$}\rangle.
$$
In other words, $\As(Q)$ is the group generated by the elements of $Q$ subject to the relations $a\triangleright b= aba^{-1}$.
These operations actually define an adjunction between the category $\textbf{Grp}$ of groups and $\Qdl$ (c.f.\ \cite[Lemma 1.6]{racks})
\begin{equation}\label{eq:adj}
\xymatrix{
\textbf{Grp} \ar@/^1.0pc/[rr]^{\Conj} && \Qdl, \ar@/^1.0pc/[ll]^{\textrm{As}}
}
\end{equation}
Explicitly, $\Conj$ is the left adjoint and $\As$ is the right adjoint, that is
$$
    \Hom_{\textbf{Grp}}(\As(Q), G) = \Hom_{\Qdl}(Q, \Conj(G)),  
$$
for any quandle $Q$ and group $G$. In particular, the unit of the adjunction induces a map $G \to \As(\Conj(G))$ for any group $G$, but in general this is not an isomorphism.

\begin{rem}\label{rmk:free-quandle}
Given any set $S$, the \emph{free quandle} on $S$ as the set $\FQ_S \subset \Free{S}$, where $\Free{S}$ denotes the free group generated by $S$, 
consisting of all conjugates in $\Free{S}$ of the elements of $S$, that is $\FQ_S=\{ w s w^{-1} | w\in \Free{S}, s\in S\}$. The 
quandle operation is defined by $a \triangleright b= aba^{-1}$. This is a subquandle of $\Conj(\Free{S})$, and it is straightforward to see
that $\As(\FQ_S)=\Free{S}$. 

The free quandle $\FQ_S$ satisfies the universal property that $\Hom_{\Qdl}(\FQ_S, Q) = 
\Hom_{\textbf{Set}}(S, Q)$ for any quandle $Q$. The free quandle on a set of $n$ elements will be denoted by $\FQ_n$.
\end{rem}

\begin{rem}\label{rmk:quandle-alexander}
Given any $t \in \CC^* = \CC - \{0\}$, the \emph{$t$-Alexander quandle} is $\AQ{t} = (\CC, \triangleright_t)$ with the operations
$$
	a\triangleright_t b=tb+(1-t)a, \qquad b \triangleleft_t a = t^{-1}b + (1-t^{-1})a,
$$
for $a, b \in \CC$.

 Consider the conjugation quandle $\Conj(\agl)$ associated to $\agl$. For any $t \in \CC^*$, it is straightforward to check that the map from the $t$-Alexander quandle
    $$
        \AQ{t} \to \Conj(\agl), \qquad a \mapsto \begin{pmatrix}
            1 & 0 \\ a & t
        \end{pmatrix},
    $$
    defines an injective quandle morphism. It thus identifies $\AQ{t}$ with the subquandle of $\Conj(\agl)$ of matrices with diagonal entry equal to $t$.
\end{rem}


\subsection{The knot quandle}

Let us recall the definition of a knot quandle and its multiple incarnations, as introduced by Joyce \cite{Joycequandle} and Matveev \cite{Matveev}. 

Let $M$ be a connected oriented manifold and let $L \subset M$ be an oriented submanifold of codimension two. Typically, $L$ will be a link in $S^3$ or a collection of points on a surface. The orientations on $L$ and $M$ induce an orientation on the normal bundle of $L$ and thus on a tubular neighbourhood $N_L \subset M$ of $L$. Denote by $E_L=M-N_L$ the exterior of this neighbourhood. Fixing a base point $x_0\in E_L$, we define $\Gamma_L$ as the set of homotopy classes of paths in $E_L$ from $x_0$ to $\partial N_L$, where homotopies are required to keep the initial point $x_0$ fixed while allowing the endpoint to move freely in $\partial N_L$. Furthermore, note that any point of $\partial N_L$ belongs to a unique meridian circle of the normal circle bundle, so any $\alpha \in \Gamma_L$ defines a unique loop $m_\alpha$ in $\partial N_L$ based at the endpoint of $\alpha$ and positively oriented with respect to the orientation of $N_L$. This allows us to define an operation on $\Gamma_L$ that, given paths $\alpha, \beta \in \Gamma_L$, assigns
$$
\alpha \triangleright \beta = \alpha \star m_{\alpha} \star \bar{ \alpha}\star \beta,
$$
where $\star$ denotes the standard concatenation of paths, $\bar{\alpha}$ denotes the reverse path to $\alpha$, and we choose the same path representative
for $\alpha$ and $\bar{\alpha}$, and the path $m_{\alpha}$ at the endpoint of $\alpha$. 
This operation endows $Q_L = (\Gamma_L,\triangleright)$ with a quandle structure, called the \emph{quandle of $L$}. Notice that $Q_L$ only depends on the chosen basepoint up to isomorphism.

There is a natural map 
\begin{equation}\label{eqn:as-QL}
 \begin{array}{ccc}
  Q_L & \longrightarrow & \pi_1(L^c)=\pi_1(M-N_L) \\
  \alpha & \longmapsto & \alpha\star m_\alpha\star \bar{\alpha},
  \end{array}
  \end{equation}
that shows that $\As(Q_L) =\pi_1(L^c)$.
\begin{rem}
    The construction of the quandle $L$ is functorial in the following sense. Let $L \subset M$ as above, and suppose that $N \subset M$ is an oriented submanifold that intersects $L$ transversely. Then $L \cap N$ is a submanifold of $N$ of codimension two with a naturally induced orientation. Then the quandle $Q_{L \cap N}$ is well defined and the inclusion induces a quandle morphism $Q_{L \cap N} \to Q_L$.
\end{rem}

In the case of links $L \subset S^3$, it is possible to give a Wirtinger-type presentation of the quandle of $L$. Given a regular projection of $L$, it defines a link diagram $D$ with set of arcs $A_D = \{x_1, \ldots, x_n\}$. For each positive crossing $L_+$ in Figure 
\ref{fig:wirtinger}, define the relation $x_j\triangleright_D x_i=x_{i+1}$, and for each negative crossing $L_-$ define the relation
$x_i\triangleleft_D x_j=x_{i+1}$. 
In \cite{fenn1992racks}, it is proven that $\langle A_D \mid \triangleright_D \rangle$ is a presentation of $Q_L$. In particular, it proves that $Q_L$ is finitely generated. This presentation is sometimes known as the \emph{algebraic quandle of $L$}.

Indeed, relating the algebraic quandle description with the Wirtinger presentation of Section \ref{sec:2}, we get that the associated group of the link quandle is the fundamental group of its complement
$$
     \As(Q_L) = \pi_1(S^3-L)=\pi_1(L^c),
$$
agreeing with (\ref{eqn:as-QL}).

One of the main reasons for the importance of quandles in knot theory is the following result by Joyce.

\begin{thm}[\cite{Joycequandle}]
The knot quandle $Q_K$ is a complete knot invariant up to orientation.
\end{thm}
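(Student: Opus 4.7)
The plan is to reduce the completeness of $Q_K$ to Waldhausen's theorem \cite{waldhausen1968irreducible}, cited in Section~\ref{sec:2}, which asserts that a knot is determined up to ambient isotopy and orientation reversal by the pair $(\pi_1(K^c), P_K)$, where $P_K$ is the peripheral subgroup generated by a meridian $m$ and a longitude $\ell$. The task is thus to recover from $Q_K$ both the knot group and this peripheral pair.

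The knot group is immediate from the identification $\As(Q_K) = \pi_1(K^c)$ in \eqref{eqn:as-QL}. The same map sends each element of $Q_K$ to a conjugate of a meridian, so any generator of $Q_K$ provides a representative of the meridian conjugacy class. The non-trivial task is to reconstruct a longitude, which lies in the commutator subgroup of $\pi_1(K^c)$ and commutes with the corresponding meridian. The strategy exploits the geometric definition $\alpha \triangleright \beta = \alpha \star m_\alpha \star \bar\alpha \star \beta$: given a Wirtinger-type presentation $\langle A_D \mid \triangleright_D\rangle$ coming from a diagram $D$, traversing $K$ from a chosen arc and recording, crossing by crossing, the signed over-arc encountered produces a word in the meridians; this word, interpreted in $\As(Q_K)$, is the longitude. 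The crucial point, due to Joyce, is that the data of over-arcs and signs are already encoded in the algebraic quandle relations, so the longitude can be extracted intrinsically from $Q_K$ rather than from any particular diagram.

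With $(\pi_1(K^c), \langle m, \ell\rangle)$ in hand, Waldhausen's theorem yields $K$ up to isotopy and orientation reversal of the knot; the orientation ambiguity is unavoidable because the quandle construction does not distinguish a direction of travel along $K$. The principal obstacle is precisely the intrinsic reconstruction of the longitude: locally, each crossing is faithfully recorded by one quandle relation, but globally one must single out, using only the isomorphism class of the quandle, a canonical cyclic ordering of the arcs along the knot together with a consistent sign convention at each crossing. This combinatorial reconstruction, carried out in \cite{Joycequandle}, is the technical heart of the argument and is what forces the ``up to orientation'' qualification.
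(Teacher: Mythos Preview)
The paper does not supply a proof of this theorem: it is stated with a citation to \cite{Joycequandle} and used as background. So there is no in-paper argument to compare against; the relevant benchmark is Joyce's original proof.

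Your outline correctly identifies the reduction to Waldhausen's theorem and the recovery of $\pi_1(K^c)=\As(Q_K)$ together with a meridian. Where it falls short is the longitude. You describe reading off the longitude by walking along a diagram and then assert that this combinatorial data ``can be extracted intrinsically from $Q_K$ rather than from any particular diagram'', deferring the actual extraction back to \cite{Joycequandle}. That is precisely the content of the theorem you are trying to prove, so as written the argument is circular at its key step.

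Joyce's actual mechanism is cleaner and avoids diagrams altogether. Using the geometric description of $Q_K$ recalled just before \eqref{eqn:as-QL}, the associated group $\As(Q_K)\cong\pi_1(K^c)$ acts on $Q_K$ (by $g\cdot\alpha = g\star\alpha$ at the level of paths), and the stabilizer of any element $\alpha\in Q_K$ is exactly the peripheral subgroup $P_K=\langle m,\ell\rangle$ at the endpoint of $\alpha$. Thus the pair $(\pi_1(K^c),P_K)$ is recovered as $(\As(Q_K),\Stab_{\As(Q_K)}(\alpha))$, purely from the quandle, and Waldhausen finishes the job. If you want to turn your sketch into a proof, replace the diagrammatic longitude discussion with this stabilizer argument.
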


\begin{ex} \label{ex:quandle-free-product}
 Suppose that $L$ is a link formed by two disjoint sublinks $L=L_1\sqcup L_2$, that is, in the planar diagram there are no
crossings between strands of $L_1$ and $L_2$. Then the description of the link quandle implies that $Q_L=Q_{L_1}\star Q_{L_2}$,
where the free product of quandles is defined as the set of all conjugates of elements of $Q_{L_1}$ or $Q_{L_2}$ inside the
free product of groups $\As(Q_{L_1})\star \As(Q_{L_2})$. 
Note that $\Hom_{\Qdl}(Q_{L_1}\star Q_{L_2}, X) \cong \Hom_{\Qdl}(Q_{L_1}, X) \x \Hom_{\Qdl}(Q_{L_2}, X)$ for any quandle $X$.
\end{ex}

There is an analogous representation theory of quandles parallel to the one of representation varieties of groups. Given a finitely generated quandle $Q$, i.e.\ one that admits a surjective homomorphism $\FQ_n \to Q$ for some $n \geq 1$, and a complex linear algebraic quandle $X$, we can consider the quandle representation variety
$$
	\R_X(Q) = \Hom_{\Qdl}(Q, X).
$$
In the same vein as representation varieties of groups, the quandle representation variety $\R_X(Q)$ inherits a natural structure of an affine algebraic variety.

\begin{rem}
    In the case that $X = \Conj(G)$ is the conjugation quandle of an algebraic group $G$, we have a natural equivalence $\R_{\Conj(G)}(Q) = \Hom_{\Qdl}(Q, \Conj(G)) \cong \Hom_{\textbf{Grp}}(\As(Q), G)$ so the quandle representation variety is isomorphic to the usual representation variety $\R_{G}(\As(Q))$.
\end{rem}

These quandle representation varieties are particularly relevant in the context of the Alexander fibration \eqref{eqn:representationfibration}, where we can identify the fibre using quandle representations. Given a knot $K \subset S^3$, let us denote by 
 $$
\R_{X}(K) := \Hom_{\Qdl}(Q_K, X)
$$ 
the associated quandle representation variety to the knot quandle $Q_K$. 

\begin{prop}
For any $t\in \CC^{\ast}$, the fibre $\mathcal{S}_t$ over $t$ of the Alexander fibration is the $\AQ{t}$-quandle representation variety $\R_{\AQ{t}}(K) = \Hom_\Qdl(Q_K, \AQ{t})$.
\end{prop}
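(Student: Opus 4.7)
The plan is to unwind definitions using two ingredients: the adjunction (\ref{eq:adj}) between $\Conj$ and $\As$, and the observation in Remark \ref{rmk:quandle-alexander} that $\AQ{t}$ embeds into $\Conj(\agl)$ as the subquandle of matrices whose diagonal entry equals $t$.

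First I would apply the adjunction, together with the identity $\As(Q_K) = \Gamma_K$ coming from (\ref{eqn:as-QL}), to obtain a natural bijection
$$
\Hom_\Qdl(Q_K, \Conj(\agl)) \;\cong\; \Hom(\Gamma_K, \agl) \;=\; \R_{\agl}(K).
$$
This identification is in fact an isomorphism of affine algebraic varieties, since on both sides one is describing tuples of matrices in $\agl$ satisfying the same Wirtinger-type conditions: the algebraic quandle relations $x_j \triangleright_D x_i = x_{i+1}$ (resp.\ $x_i \triangleleft_D x_j = x_{i+1}$) for $Q_K$ translate under $\Conj$ to the group-theoretic conjugation relations of the Wirtinger presentation (\ref{eqn:crossingequation}).

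Next I would analyze when a quandle morphism $\varphi: Q_K \to \Conj(\agl)$ factors through the subquandle $\AQ{t} \hookrightarrow \Conj(\agl)$. By Remark \ref{rmk:quandle-alexander}, this happens precisely when the image of every element of $Q_K$ is a matrix with diagonal entry $t$. Under the adjunction, $\varphi$ corresponds to a representation $\rho: \Gamma_K \to \agl$ whose restriction to the meridian generators encodes $\varphi$. Since any two meridians of $K$ are conjugate in $\Gamma_K$, their images under $\rho$ automatically share the same diagonal entry; hence the condition that $\varphi$ factor through $\AQ{t}$ reduces to a single equality of diagonal entries, which is by definition $\pi_K(\rho) = t$. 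Consequently, $\Hom_\Qdl(Q_K, \AQ{t})$ corresponds bijectively to $\pi_K^{-1}(t) = \mathcal{S}_t$.

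The main obstacle is not conceptual but bookkeeping: one must verify that the resulting bijection is an isomorphism of algebraic varieties, not merely of sets. This step is transparent because, once $t$ is fixed, the linearized Wirtinger equations (\ref{eqn:typeIlineareq})--(\ref{eqn:typeIIlineareq}) that describe $\mathcal{S}_t$ coincide term by term with the relations $x_{i+1} = x_j \triangleright_t x_i$ arising from the algebraic quandle presentation of $Q_K$ under the Alexander quandle operation $a \triangleright_t b = tb + (1-t)a$. Hence both varieties are cut out by the very same linear system in $\CC^n$, so the bijection produced by the adjunction is a regular isomorphism.
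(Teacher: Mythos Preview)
Your proposal is correct and follows essentially the same route as the paper: both arguments rest on the subquandle embedding $\AQ{t}\hookrightarrow\Conj(\agl)$ from Remark~\ref{rmk:quandle-alexander} together with the adjunction identity $\Hom_{\Qdl}(Q_K,\Conj(\agl))\cong\Hom_{\textbf{Grp}}(\As(Q_K),\agl)=\R_{\agl}(K)$, and then identify the image of $\Hom_{\Qdl}(Q_K,\AQ{t})$ inside $\R_{\agl}(K)$ with $\pi_K^{-1}(t)$. Your write-up is somewhat more explicit than the paper's (you spell out the conjugacy of meridians and the matching of linear equations to justify the algebraic isomorphism), but the underlying argument is the same.
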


\begin{proof}
By Remark \ref{rmk:quandle-alexander}, $\AQ{t}$ is the subquandle of $\Conj(\agl)$ of matrices with diagonal entry equal to $t$. This defines an inclusion
$$
    \R_{\AQ{t}}(K) = \Hom_{\Qdl}(Q_K, \AQ{t}) \hookrightarrow \Hom_{\Qdl}(Q_K, \Conj(\agl)) = \R_\agl(K),
$$
where we have used that $\As(Q_K) = \G_K$. Indeed, the image of this inclusion is precisely the set of representations $\rho \in \R_\agl(K) = \Hom_{\Qdl}(Q_K, \Conj(\agl))$ such that $\pi_K(\rho) = t$, showing that it is exactly the $t$-fibre of the Alexander fibration.
\end{proof}



\begin{cor} \label{cor:stalks}
For $t \in \CC^*$, the fibre over $t$ of the Alexander fibration $\pi_K: \R_{\agl}(K) \to \R_{\CC^*}(K) = \CC^*$ is $\R_{\AQ{t}}(K)$.
\end{cor}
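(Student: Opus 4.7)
The plan is to observe that this corollary is essentially a direct reformulation of the preceding proposition, which already establishes the required identification. The proposition shows that the natural inclusion $\AQ{t} \hookrightarrow \Conj(\agl)$ of Remark \ref{rmk:quandle-alexander} (identifying $\AQ{t}$ with the subquandle of matrices whose diagonal entry equals $t$) induces, upon taking $\Hom_{\Qdl}(Q_K, -)$ and using the adjunction $\As \dashv \Conj$ together with $\As(Q_K) = \G_K$, an inclusion
$$
\R_{\AQ{t}}(K) \hookrightarrow \R_\agl(K)
$$
whose image is precisely the set of $\rho$ with $\pi_K(\rho) = t$, that is, $\mathcal{S}_t = \pi_K^{-1}(t)$. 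Since by definition the fibre of the Alexander fibration over $t$ is $\mathcal{S}_t$, the corollary is immediate.

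The only point worth emphasising is the compatibility with the singular vector bundle structure developed in Section \ref{sec:alexsingularvectorbundle}: the vector space structure on the fibre $\mathcal{S}_t$ described in Corollary \ref{cor:Alexanderfibration} as solutions of the linear system $A_K(t)\,a = 0$ matches, under the identification with $\R_{\AQ{t}}(K)$, the natural $\CC$-linear structure inherited from $\AQ{t} = (\CC, \triangleright_t)$, since under the correspondence $a \mapsto \bigl(\begin{smallmatrix} 1 & 0 \\ a & t \end{smallmatrix}\bigr)$ the Wirtinger relations in $\agl$ translate directly into the quandle relations $x_{i+1} = x_j \triangleright_t x_i$ (or $x_{i+1} = x_i \triangleleft_t x_j$) via \eqref{eqn:typeIlineareq}--\eqref{eqn:typeIIlineareq}. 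Thus no additional argument is needed beyond citing the preceding proposition, and the main work of the corollary is bookkeeping at the level of language (passing from fibres of a morphism to stalk-like descriptions in terms of quandle representations into Alexander quandles).
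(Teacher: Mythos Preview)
Your proposal is correct and matches the paper's approach: the corollary is stated without proof in the paper precisely because it is an immediate restatement of the preceding proposition, whose proof uses exactly the inclusion $\AQ{t}\hookrightarrow\Conj(\agl)$ and the identification $\As(Q_K)=\G_K$ that you describe. Your additional remark on the compatibility of linear structures is accurate but not needed for the bare statement.
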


\begin{rem}
    In the language of the work of Hironaka \cite{hironaka1997alexander} for the coherent sheaf $\mathcal{M}_K$ of regular sections of the Alexander fibration, the fibre of (the associated singular vector bundle of) this sheaf can be understood in terms of group cohomology. To be precise, \cite{hironaka1997alexander} 
    studies a coherent sheaf over $\R_{\CC^*}(\Gamma) = \Hom(\Gamma, \CC^*)$ whose fibre over $\rho \in \R_{\CC^*}(\Gamma)$ is given by the $1$-cocycles
    $$
        Z^1(\Gamma, \rho) = \left\{f: \Gamma \to \CC \,|\, f(\gamma_1\gamma_2) = f(\gamma_1) + \rho(\gamma_1)f(\gamma_2)\right\}.
    $$
    Indeed, if $\Gamma = \Gamma_K$ for a knot $K$, then $\rho \in \R_{\CC^*}(\Gamma)$ is determined by the image $t \in \CC^*$ of the standard generators $x_1, \ldots, x_n$ of $\Gamma_K$, and thus we get that $Z^1(\Gamma_K, \rho)$ is the collection of functions $f: \Gamma_K \to \CC$ such that
    $$
        f(x_ix_j) = f(x_i) + tf(x_j),
    $$
    for all $1 \leq i,j\leq n$ or, in terms of conjugation
    $$
        f(x_ix_jx_i^{-1}) = (1-t)f(x_i) + tf(x_j),
    $$
    which coincides with the quandle representation variety $\R_{\AQ{t}}(K)$.
\end{rem}

\begin{rem}
    For a general algebraic group $G$ with Lie algebra $\mathfrak{g}$ and finitely generated group $\Gamma$, the tangent space $T_{\rho} \R_G(\Gamma)$ to $\R_G(\Gamma)$ at a point $\rho \in \R_G(\Gamma)$ is given by the group cohomology $1$-cocycles for the adjoint representation $\Ad_\rho = \Ad \circ \rho: \Gamma \to G \to \textup{GL}(\mathfrak{g})$ (see \cite{weil})
    $$
        T_{\rho} \R_G(\Gamma) = Z^1(\Gamma, \Ad_\rho) = \left\{f: \Gamma \to \mathfrak{g}\,|\, f(\gamma_1\gamma_2) = f(\gamma_1) + \Ad_{\rho}(\gamma_1)f(\gamma_2)\right\}.
    $$
    
    In this manner, the coherent sheaf $\mathcal{M}_K$ can be seen as a sort of ``twisted'' tangent sheaf for $\R_{\CC^*}(K)$ where, over $\rho \in \R_{\CC^*}(K)$, we use the representation $\rho$ itself instead of the adjoint $\Ad_\rho$.
\end{rem}

\begin{rem}\label{rmk:fibrevectorspace-link}
 Most of the discussion of this section extends verbatim to links $L \subset S^3$, using the link quandle $Q_L$ and its link group $\G_L = \pi_1(L^c)$, which coincides with $\As(Q_L)$. However, despite the parallelism with a knot, in this case the fibre $\pi_L^{-1}(t_1, \ldots, t_s) \subset \R_{\agl}(L) = (\CC^*)^s$ of the Alexander fibration does not have in general a simple interpretation in terms of representations onto Alexander quandles, since different diagonal entries are assigned to different meridians. However, the `diagonal' fibre $\pi_L^{-1}(t, t, \ldots, t)$ does recover the quandle interpretation being isomorphic to $\R_{\AQ{t}}(Q_L)$.
     
\end{rem}

\section{Tangles and Alexander quandles} \label{sec:tanglesandalexquandles}

\subsection{The tangle category} \label{sec:tangle-cat}

We outline the main definitions and properties of the tangle category; for a detailed exposition we refer to \cite{Kasselquantumgroups}. For any integer $n \geq 1$, let $[n]$ denote the set $\{1,\ldots,n \}$ and set $[0] = \emptyset$. A tangle $T$ is the union of a finite number of disjoint oriented polygonal arcs in $\mathbb{R}^2 \times [0,1]$ such that its boundary satisfies the condition
$$
\partial T=T \cap\left(\mathbb{R}^2 \times\{0,1\}\right)=([k] \times\{0\} \times\{0\}) \cup([\ell] \times\{0\} \times\{1\}),
$$
for some $k, \ell \geq 0$. Notice that in this definition we also allow closed polygonal loops with no endpoints. 


Tangles naturally form a category $\cT$ as follows:
\begin{itemize}
\item \emph{Objects}: finite sequences of $\pm$ signs, possibly empty. Given any sequence $\epsilon = (\epsilon_1, \ldots, \epsilon_n)$ with $\epsilon_i = \pm$, we shall write $-\epsilon = (-\epsilon_1, \ldots, -\epsilon_n)$ for the sequence where all signs have been flipped, and $|\epsilon| = n$ for its length. Equivalently, an object $\epsilon$ can be seen as the complementary surface $\epsilon^c = \mathbb{R}^2 - \left([|\epsilon|] \times \{0\}\right)$ with a choice of orientations for the normal bundle around the punctures, according to the signs $\epsilon_i$.
\item \emph{Morphisms}: isotopy classes of oriented tangles. Any oriented tangle $T$ has two associated objects: its source $s(T)$, and target $b(T)$, which are the collections of signed points that belong to $\partial T\cap (\mathbb{R}^2\times \{0\})$ and $\partial T\cap (\mathbb{R}^2\times \{1\})$ respectively, so that the tangle defines a morphism between them. We will consider source and target as sequences of signed points immersed in $\mathbb{R}^2$ or $T$ as above whenever it is needed. Given a tangle $T$, the sign of each point of the sequence $s(L)$ is positive (resp. negative) if the point is an origin (resp. endpoint) of $T$, and conversely for $b(T)$.

Given any object $\epsilon$ of $\cT$, the identity morphism $\id_{\epsilon}$ is the tangle formed by the union of intervals $[|\epsilon|]\times \{0\}\times [0,1]$, where the orientation of the intervals is determined so that $s(\id_{\epsilon})=b(\id_{\epsilon})=\epsilon$. In particular, we will write $\id_+=\,\uparrow$ and $\id_-=\,\downarrow\,$.
\item \emph{Composition}: The composition of two tangles, $T$ and $T'$ is the isotopy class of the tangle that is obtained by placing $T'$ on top of $T$, which is well defined whenever $b(T)=s(T')$.
\end{itemize}

Furthermore, $\cT$ is a tensor category, where the tensor product of two objects is obtained concatenating sequences: if $\epsilon=(\epsilon_1,\ldots,\epsilon_k)$ and $\varepsilon=(\varepsilon_1,\ldots,\varepsilon_l)$, then
$$
\epsilon\otimes \varepsilon=(\epsilon_1,\ldots,\epsilon_k,\varepsilon_1,\ldots,\varepsilon_l),
$$
and thus the empty sequence $\emptyset$ is the unit. The tensor product of morphisms $T$ and $T'$ is the isotopy class of the tangle that is obtained placing $T'$ to the right of $T$. This monoidal structure is braided where the braiding is given by the tangle $B_{\epsilon, \varepsilon}$ depicted in Figure \ref{fig:braiding}.
Observe that $B_{\varepsilon, \epsilon} \circ B_{\epsilon, \varepsilon} \neq \id_{\epsilon \otimes \varepsilon}$, so the monoidal structure is not symmetric.

\begin{figure}[h]
    \centering
    \includegraphics[width=0.25\linewidth]{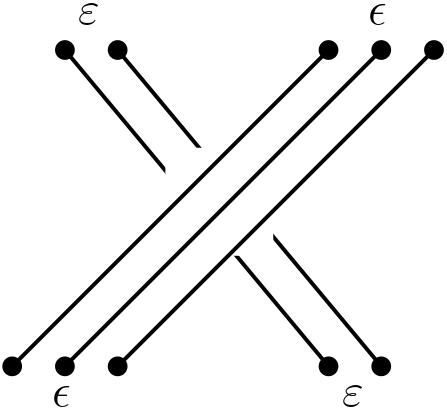}
    \caption{Braiding $B_{\epsilon, \varepsilon}: \epsilon \otimes \varepsilon \to \varepsilon \otimes \epsilon$.}
    \label{fig:braiding}
\end{figure}

Tangles can be represented using tangle diagrams, which are tangle projections to $\mathbb{R}\times [0,1]$ so that every crossing is simple. As it occurs for links and diagrams, two tangle diagrams represent isotopic tangles if one is obtained from the other by a sequence of Reidemeister transformations and 
isotopies of diagrams.

\begin{rem}\label{rem:relations-tangles}
There exists a very explicit description of the tensor category $\cT$ in terms of generators and relations, as presented in \cite{Kasselquantumgroups}. The tensor category $\cT$ admits a presentation as a monoidal category generated by the six elementary morphisms depicted in Figure \ref{fig:tangle-gens}, 
\begin{equation}\label{eqn:generatorstanglecat}
\upcurvearrowright, \upcurvearrowleft, \curvearrowright, \curvearrowleft, X_{+}, X_{-},
\end{equation}
subject to a collection of relations, detailed in \cite[Theorem XII.2.2]{Kasselquantumgroups}, which arise from isotopies of diagrams and Reidemeister transformations.
\begin{figure}[h]
    \centering
    \includegraphics[width=0.42\linewidth]{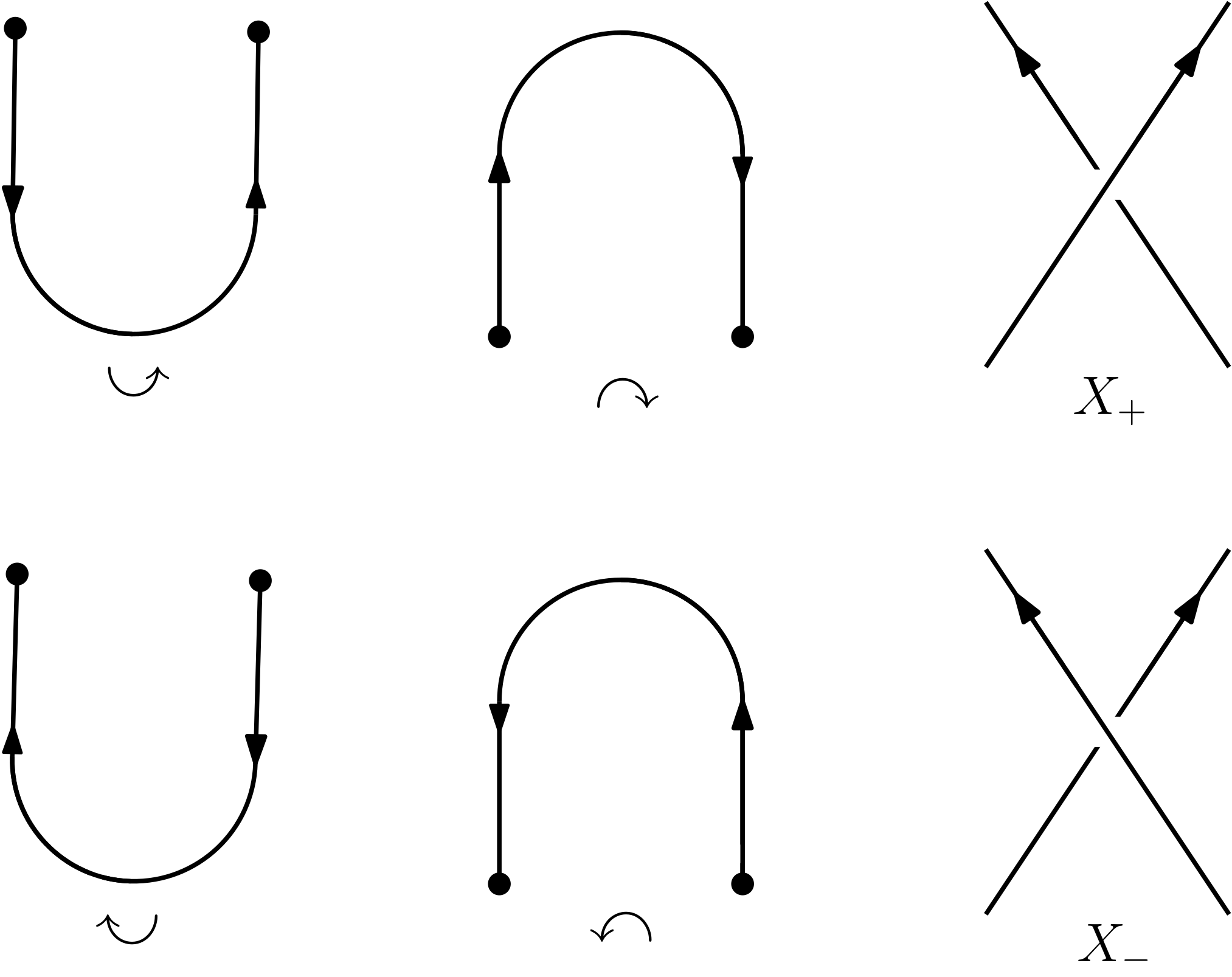}
    \caption{Generators of the tangle category $\cT$.}
    \label{fig:tangle-gens}
\end{figure}
\end{rem}


\subsection{The category $\Span(\Vect_{\CC})$}\label{sec:span-cat}
We outline now the main features of the category $\Span(\Vect_{\CC})$, where $\Vect_{\CC}$ denotes the category of finite dimensional complex vector spaces. We have
\begin{itemize}
\item \emph{Objects}: they are the same as in $\Vect_{\CC}$, finite dimensional complex vector spaces.
\item \emph{Morphisms}: given $X,Y\in \Vect_{\CC}$, a morphism $X\longrightarrow Y$ is given by an equivalence class of tuples $(Z, f, g)$ of linear maps
\begin{equation} \label{eqn:linearspan}
\xymatrix{
& Z \ar[dl]_{f} \ar[dr]^{g} & \\
X & & Y
}
\end{equation}
where $Z$ is another complex vector space. Such a tuple is usually referred to as a \emph{span} of vector spaces. Spans break asymmetries between source and target: a span with $f=\id$ is just a linear map from $X$ to $Y$ and a span with $g=\id$ is a linear map from $Y$ to $X$. Two spans $X \stackrel{f}{\longleftarrow} Z \stackrel{g}{\longrightarrow} Y$ and $X \stackrel{f'}{\longleftarrow} Z' \stackrel{g'}{\longrightarrow} Y$ are declared to be equivalent if there exists an isomorphism $\varphi: Z \to Z'$ such that the following diagram commutes

$$
\xymatrix{
	& Z \ar[ld]_{f} \ar[rd]^{g} \ar@{--{>}}[dd]^{\varphi} & \\
	X & & Y \\
	& Z' \ar[lu]^{f'} \ar[ru]_{g'} & 
}
$$

\item \emph{Composition}: The composition of two morphisms $(f_1,g_1): X\rightarrow Y$ and $(f_2,g_2): Y \rightarrow Z$ is given by the pullback diagram
\[
\xymatrix{
& & Z_1 \times_{Y} Z_2 \ar[dl]_{\pi_1} \ar[dr]^{\pi_2} & & \\
& Z_1 \ar[dl]_{f_1} \ar[dr]^{g_1} & & Z_2 \ar[dl]_{f_2} \ar[dr]^{g_2} & \\
X & & Y & & Z
}
\]
where the pullback is given as the subspace $Z_1 \times_{Y} Z_2 =\lbrace (z_1,z_2) \,|\, g_1(z_1)=f_2(z_2) \rbrace \subset 
Z_1\times Z_2$. It is equivalently the kernel of the map $g_1\circ \pi_1 - f_2\circ \pi_2 : Z_1 \times Z_2 \to Y$, where $\pi_1$ and $\pi_2$ denote the projections of $Z_1\times Z_2$ onto its factors.
\end{itemize}

\begin{rem}
$\Vect_\CC$ naturally embeds inside $\Span(\Vect_\CC)$. Indeed, given a linear map $f: X \to Y$, we associate to it the span
$$
\xymatrix{
	& X \ar[ld]_{\id_X} \ar[rd]^{f} & \\
	X & & Y
}
$$
We shall refer to these special spans as \emph{basic spans}. Note that composition of basic spans is preserved under this assignment, since the pullback of two spans $X \stackrel{\id_X}{\longleftarrow} X \stackrel{f}{\longrightarrow} Y$ and $Y \stackrel{\id_Y}{\longleftarrow} Y \stackrel{g}{\longrightarrow} Z$ is given by
\begin{equation} \label{diag:spanproduct}
\xymatrix{
& & X = X \times_Y Y \ar[dl]_{\id_X} \ar[dr]^{f} & & \\
& X \ar[dl]_{\id_X} \ar[dr]^{f} & & Y \ar[dl]_{\id_Y} \ar[dr]^{g} & \\
X & & Y & & Z
}
\end{equation}
Therefore, this span corresponds exactly to the basic span associated with the composition $g \circ f$.
\end{rem}

Finally, notice that any span $X \stackrel{f}{\longleftarrow} Z \stackrel{g}{\longrightarrow} Y$ such that $f$ is an isomorphism is equivalent to a basic span in a unique way. Indeed, we have the equivalence
$$
\xymatrix{
	& Z \ar[ld]_{f} \ar[rd]^{g} \ar@{--{>}}[dd]^{f} & \\
	X & & Y \\
	& X \ar[lu]^{\Id_X} \ar[ru]_{g \circ f^{-1}} & 
}
$$

\begin{rem}
    We also have a ``dual'' inclusion $(-)^*: \Vect_\CC^{\textup{op}} \to \Span(\Vect_\CC)$ by associating to a vector space $X$ its dual $X^*$. In this manner, a linear map $f: X \to Y$ embeds as the span
$$
\xymatrix{
	& Y^* \ar[ld]_{\Id_{Y^*}} \ar[rd]^{f^*} & \\
	Y^* & & X^*
}
$$
The image subcategory is thus exactly the one containing the spans $X \stackrel{f}{\longleftarrow} Z \stackrel{g}{\longrightarrow} Y$ such that $g$ is an isomorphism, and the original vector spaces are recovered by double dual $(f\circ g^{-1})^*: X^* \to Z^*$.
\end{rem}


\subsubsection*{Rigid monoidal structure}

The category $\Vect_\CC$ is a monoidal category with the \emph{direct sum} $\oplus$ of vector spaces. This monoidal structure is inherited by $\Span(\Vect_\CC)$ through direct sum of both objects and spans. Explicitly, given spans $X \stackrel{f}{\longleftarrow} Z \stackrel{g}{\longrightarrow} Y$ and $X' \stackrel{f'}{\longleftarrow} Z' \stackrel{g'}{\longrightarrow} Y'$, their direct sum is the span
$$
	\xymatrix{
& Z \oplus Z' \ar[dl]_{f \oplus f'} \ar[dr]^{g \oplus g'} & \\
X \oplus X' & & Y \oplus Y'
}
$$
Notice that $\oplus$ is both the product and coproduct in $\Vect_\CC$, so $(\Vect_\CC, \oplus)$ is a cartesian and cocartesian category. The monoidal unit of this structure is the zero vector space $0$.

Furthermore, observe that the monoidal category $(\Span(\Vect_{\CC}), \oplus)$ is rigid and, indeed, every object is self-dual (see Section \ref{sec:duality}).

\subsection{The Topological Quantum Field Theory}\label{sec:TQFT}

In the setting described in Sections \ref{sec:tangle-cat} and \ref{sec:span-cat}, the aim of this section is to prove that, fixed $t \in \CC^*$, there exists a braided monoidal functor
$$
\cA_t: \cT \longrightarrow \Span(\Vect_{\CC})
$$
computing representation varieties on the Alexander quandle $\AQ{t}$.

This assignment is defined as follows.
\begin{itemize} 
\item On objects, we set $\cA_t(\emptyset)= 0$ and $\cA_t(\epsilon) = \CC^{|\epsilon|}$ for any object $\epsilon$ of $\cT$. More abstractly, we define
$$
	\cA_t(\epsilon) = \R_{\AQ{t}}(\epsilon) := \Hom_{\Qdl}(Q_{\epsilon}, \AQ{t}),
$$
where $Q_{\epsilon}$ is the quandle associated to the oriented codimension two submanifold $[|\epsilon|] \times \{0\} \subset \RR^2$ given by the points of $\epsilon$ with the orientations given by the signs $\epsilon_i$. 
Its quandle $Q_{\epsilon}$ is the free quandle generated by these points, and $\As(Q(\epsilon^c))=\pi_1(\epsilon^c)$, where $\epsilon^c = \RR^2 - \left([|\epsilon|] \times \{0\}\right)$ is the plane punctured at the points of $\epsilon$. It follows that $\pi_1(\epsilon^c)=\Free{|\epsilon|}$, the free group in $|\epsilon|$ generators.

\item On morphisms, given any tangle $T: \epsilon \to \varepsilon$, we consider the representation variety
$$
\R_{\AQ{t}}(T):=\Hom_\Qdl(Q_T, \AQ{t}),
$$
where $Q_T$ is the quandle of the tangle $T$, which can be finitely presented using a planar projection, as in the case of knots and links. Using a Wirtinger presentation we may conclude that $\As(Q(T))=\pi_1(T^c)$, being $T^c = (\RR^{2} \times [0,1]) - T$ the complement of the tangle in $\mathbb{R}^2\times[0,1]$, following the notation used for knots. 

Since $\partial T^c = \overline{\epsilon^c} \sqcup \varepsilon^c$, we obtain inclusion maps $Q_{\epsilon}, Q_{\varepsilon} \hookrightarrow Q_T$ and therefore restrictions $i: \R_{\AQ{t}}(T) \to \R_{\AQ{t}}(\epsilon)$ and $j: \R_{\AQ{t}}(T) \to \R_{\AQ{t}}(\varepsilon)$. Thus, we set $\cA_t(T)$ to be the span
\[
\xymatrix{
& \R_{\AQ{t}}(T) \ar[dl]_{i} \ar[dr]^{j} & \\
\R_{\AQ{t}}(\epsilon) & & \R_{\AQ{t}}(\varepsilon)
}
\]
Notice that this is a span in $\Span(\Vect_\CC)$, since all these representation varieties are complex vector spaces as shown in Section \ref{sec:alexander-broader}.

\end{itemize}

\begin{rem}\label{rem:quandlegroupvarietiesiso}
By Remark \ref{rmk:quandle-alexander}, the quandle representation varieties $\R_{\AQ{t}}(T)$ and $\R_{\AQ{t}}(\epsilon)$ can be identified with the set of representations of their associated groups into $\agl$ where the parameter $t\in \CC^{\ast}$ remains constant, as defined in \eqref{eqn:agldescription}.
\end{rem}

\begin{prop}\label{prop:At-functor}
$\cA_t$ is a functor from $\cT$ to $\Span(\Vect_{\CC})$.
\end{prop}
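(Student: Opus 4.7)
The plan is to verify the three defining properties of a functor: that $\cA_t$ sends objects to objects and morphisms to morphisms in a well-defined way, that it preserves identity morphisms, and that it preserves composition. Since morphisms in $\cT$ are isotopy classes of tangles and the quandle $Q_T$ is an invariant of the tangle up to isotopy, the assignment $T \mapsto \R_{\AQ{t}}(T)$ is automatically independent of the chosen representative, so what really needs to be checked is that $\cA_t(T)$ is an honest span in $\Span(\Vect_\CC)$ and that composition behaves correctly.

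First I would verify that the legs of the span are linear. By Remark \ref{rem:quandlegroupvarietiesiso}, a point of $\R_{\AQ{t}}(T)$ is a tuple of complex parameters $(a_i)$, one per Wirtinger generator of $Q_T$, satisfying the linear relations \eqref{eqn:XinftytypeIrel} and \eqref{eqn:XinftytypeIIrel} associated to each crossing of a diagram of $T$. Hence $\R_{\AQ{t}}(T)$ is the solution set of a linear system with coefficients in $\CC[t,t^{-1}]$ evaluated at $t$, so it is a complex vector subspace of $\CC^N$, where $N$ is the number of arcs in the diagram. The boundary inclusions $Q_\epsilon, Q_\varepsilon \hookrightarrow Q_T$ send each generating point of $\epsilon$ or $\varepsilon$ to the Wirtinger generator associated to the incident arc, so the restriction maps $i$ and $j$ are coordinate projections of this linear subspace onto suitable factors, which are linear. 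Preservation of identities is immediate: for $\id_\epsilon$ the tangle $[|\epsilon|] \times \{0\} \times [0,1]$ has no crossings, its quandle is canonically $Q_\epsilon$, and both restrictions are the identity, so $\cA_t(\id_\epsilon)$ is equivalent to the basic span $\id_{\R_{\AQ{t}}(\epsilon)}$.

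The substantive step is preservation of composition. Given composable tangles $T: \epsilon \to \varepsilon$ and $T': \varepsilon \to \varrho$, I would prove that the quandle of the composition is the pushout
\[
Q_{T' \circ T} \;=\; Q_T \sqcup_{Q_\varepsilon} Q_{T'}
\]
in the category $\Qdl$. This is essentially a Seifert–van Kampen statement, and it can be proved directly from the Wirtinger-type presentation: stacking a diagram of $T'$ on top of a diagram of $T$, the generating arcs of $Q_{T' \circ T}$ are the disjoint union of the arcs of $T$ and $T'$ with the two copies along the middle level $\varepsilon$ identified, and the relations are precisely the union of the crossing relations of $T$ and of $T'$. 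Applying the contravariant $\Hom_{\Qdl}(-, \AQ{t})$, which turns pushouts into pullbacks (and preserves the linear structure since $\AQ{t}$ is an affine structure on $\CC$ whose quandle operation is linear in each argument), yields
\[
\R_{\AQ{t}}(T' \circ T) \;\cong\; \R_{\AQ{t}}(T) \times_{\R_{\AQ{t}}(\varepsilon)} \R_{\AQ{t}}(T'),
\]
and the restriction maps on the right are exactly the legs of the pullback of the spans $\cA_t(T)$ and $\cA_t(T')$. By definition of composition in $\Span(\Vect_\CC)$, this gives $\cA_t(T' \circ T) = \cA_t(T') \circ \cA_t(T)$, as required.

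The main obstacle I anticipate is the pushout statement for tangle quandles, which is the quandle analogue of Seifert–van Kampen. The cleanest route is combinatorial, via the Wirtinger-style presentation of $Q_T$ recalled in Section \ref{sec:quandles}, so that the identification of arcs at the common boundary $\varepsilon$ becomes a finite bookkeeping exercise. One subtlety is to check that the pullback arising on the right-hand side is taken in the category $\Vect_\CC$ and not merely in $\textbf{Set}$; this amounts to observing that the equation $g_1(z_1) = f_2(z_2)$ cutting out the pullback is a linear condition between two complex vector spaces with linear maps to $\R_{\AQ{t}}(\varepsilon)$, so the pullback is automatically a linear subspace of $\R_{\AQ{t}}(T) \oplus \R_{\AQ{t}}(T')$. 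Once these points are in place, functoriality follows.
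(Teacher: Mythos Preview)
Your proposal is correct and follows essentially the same route as the paper: preservation of identities via the trivial tangle, and preservation of composition via the quandle pushout $Q_{T'\circ T}\cong Q_T \ast_{Q_\varepsilon} Q_{T'}$ followed by applying $\Hom_{\Qdl}(-,\AQ{t})$ to convert pushouts into pullbacks. The only minor difference is that the paper outsources the Seifert--van Kampen statement for tangle quandles to a reference (\cite{cattabriga}), whereas you sketch it combinatorially from the Wirtinger presentation; you also spell out the linearity of the restriction maps and the fact that the pullback lives in $\Vect_\CC$, which the paper leaves implicit.
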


\begin{proof}
Given any object $\epsilon$ of $\cT$ of length $n = |\epsilon|$, the identity tangle $\id_{\epsilon}$ is given by the union of intervals $[n]\times \{0\} \times [0,1] \subset \mathbb{R}^2 \times [0,1]$, that is, $n$ intervals with the same orientation. Its fundamental group is free in $n$ generators, so $\R_{\AQ{t}}(\id_{\epsilon}^c) = \CC^n$ and the restriction maps to both boundaries 
$\R_{\AQ{t}}(\epsilon^c) = \CC^n$ are the identity. Thus, the associated span $\cA_t(\id_{\epsilon}) = (\CC^n, \id_{\CC^n}, \id_{\CC^n})$ is the identity span.

 With respect to composition, consider two tangles $T: \epsilon_1 \to \epsilon_2$ and $T': \epsilon_2 \to \epsilon_3$, which induce spans $\R_{\AQ{t}}(\epsilon_1) \stackrel{i}{\longleftarrow} \R_{\AQ{t}}(T) \stackrel{j}{\longrightarrow} \R_{\AQ{t}}(\epsilon_2)$ and $\R_{\AQ{t}}(\epsilon_2) \stackrel{i'}{\longleftarrow} \R_{\AQ{t}}(T') \stackrel{j'}{\longrightarrow} \R_{\AQ{t}}(\epsilon_3)$. Considering the pushout in the quandle category, that is, the amalgamated product of $Q_T$ and $Q_{T'})$, we get that $Q_{T'\circ T}=Q_{T'}\ast_{Q_{\epsilon_2}} 
Q_T$ (see \cite{cattabriga}). Applying the continuous functor $\Hom_{\textbf{Qdl}}(-, \AQ{t})$ to the pushout diagram for $Q_{T'\circ T}$ induced by the inclusions, we obtain the pullback diagram
\[
\xymatrix{
& & \R_{\AQ{t}}(T'\circ T) \ \ar@{-->}[dl] \ar@{-->}[dr] & & \\
& \R_{\AQ{t}}(T) \ar[dl] \ar[dr]^{(i_1)_*} & & \R_{\AQ{t}}(T') \ar[dl]_{(i_2)_*} \ar[dr] & \\
\R_{\AQ{t}}(\epsilon_1) & & \R_{\AQ{t}}(\epsilon_2) & & \R_{\AQ{t}}(\epsilon_3)
}
\]
which is precisely the composition of the spans $\cA_t(T)$ and $\cA_t(T')$.
\end{proof}

\begin{rem}
Notice that, by the previous proof, the functor $\cA_t$ automatically satisfies all the relations of Remark \ref{rem:relations-tangles}.
\end{rem}

\begin{rem}
The quandles associated to any tangle and its source and target form a category, the \emph{bordered quandle category} $\mathcal{BQ}$, whose description  can be found in \cite{cattabriga}. The functor $BQ:\cT \longrightarrow \mathcal{BQ}$ that associates to any tangle its bordered quandle is called the \emph{fundamental quandle functor}. In this sense, we might state that the functor $\cA_t$ factorizes via $BQ$, representing bordered quandles in $\AQ{t}$, so that $\cA_t= \Hom(-,\AQ{t})\circ BQ$.
\end{rem}

Another direct consequence of the definition is the following.
\begin{prop}\label{prop:At-monoidal}
The functor $\cA_t: (\cT, \otimes) \to (\Span(\Vect_\CC), \oplus)$ is monoidal.
Given two tangles $T,T'\in \cT$, $\cA(T\otimes T')=\cA(T)\oplus \cA(T')$
\end{prop}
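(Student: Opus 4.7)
The plan is to verify the monoidal structure by checking behavior on the unit, on objects, and on morphisms, and confirming the coherence isomorphisms. The guiding principle is that horizontal concatenation of tangles produces no new crossings between the two pieces, so the resulting quandle splits as a free product, and then applying $\Hom_\Qdl(-,\AQ{t})$ converts this coproduct into a product, which is the direct sum in $\Vect_\CC$.

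First I would dispense with the unit. By construction $\cA_t(\emptyset) = 0$, which is the monoidal unit of $(\Span(\Vect_\CC), \oplus)$. For the level of objects, note that $\epsilon \otimes \varepsilon$ corresponds to the disjoint union of marked points, so $Q_{\epsilon \otimes \varepsilon} = \FQ_{|\epsilon|+|\varepsilon|} = \FQ_{|\epsilon|} \star \FQ_{|\varepsilon|} = Q_\epsilon \star Q_\varepsilon$ (free quandle on the disjoint union of generating sets, as in Remark \ref{rmk:free-quandle}). Then using the universal property of the free product of quandles, as recalled in Example \ref{ex:quandle-free-product},
$$
\cA_t(\epsilon \otimes \varepsilon) = \Hom_\Qdl(Q_\epsilon \star Q_\varepsilon, \AQ{t}) \cong \Hom_\Qdl(Q_\epsilon, \AQ{t}) \times \Hom_\Qdl(Q_\varepsilon, \AQ{t}),
$$
which in $\Vect_\CC$ coincides with the direct sum $\cA_t(\epsilon) \oplus \cA_t(\varepsilon)$.

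For morphisms, given tangles $T : \epsilon_1 \to \epsilon_2$ and $T' : \epsilon_1' \to \epsilon_2'$, the concatenated tangle $T \otimes T'$ lives in $\RR^2 \times [0,1]$ with $T$ and $T'$ placed side by side, so no strand of $T$ crosses any strand of $T'$. Exactly as in Example \ref{ex:quandle-free-product}, the Wirtinger-type quandle presentation yields $Q_{T \otimes T'} = Q_T \star Q_{T'}$; moreover, the inclusions $Q_{\epsilon_i} \hookrightarrow Q_T$ and $Q_{\epsilon_i'} \hookrightarrow Q_{T'}$ combine to give $Q_{\epsilon_i \otimes \epsilon_i'} = Q_{\epsilon_i} \star Q_{\epsilon_i'} \hookrightarrow Q_T \star Q_{T'}$ for $i = 1, 2$. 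Applying $\Hom_\Qdl(-, \AQ{t})$ converts the free products to products and commutes with the restriction maps, so the span $\cA_t(T \otimes T')$ is isomorphic to the componentwise direct sum
$$
\xymatrix{
& \R_{\AQ{t}}(T) \oplus \R_{\AQ{t}}(T') \ar[dl]_{i \oplus i'} \ar[dr]^{j \oplus j'} & \\
\R_{\AQ{t}}(\epsilon_1) \oplus \R_{\AQ{t}}(\epsilon_1') & & \R_{\AQ{t}}(\epsilon_2) \oplus \R_{\AQ{t}}(\epsilon_2')
}
$$
which is precisely $\cA_t(T) \oplus \cA_t(T')$.

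The only delicate point is checking that the isomorphisms $Q_{T \otimes T'} \cong Q_T \star Q_{T'}$ and the corresponding ones on boundaries are natural in $T$ and $T'$, so that they assemble into a monoidal natural transformation satisfying the associativity and unit coherence axioms. This amounts to verifying that the constructions respect pushouts (used for composition of tangles) and that the coherence isomorphisms of $\cT$ (which are just planar isotopies regrouping parallel strands) map to the canonical associator and unitors of $\oplus$ in $\Span(\Vect_\CC)$. Both facts are formal consequences of the functoriality of free products of quandles and of $\Hom_\Qdl(-,\AQ{t})$, and I expect this bookkeeping, rather than any substantive argument, to be the main thing to spell out.
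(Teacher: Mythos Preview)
Your proof is correct and follows essentially the same approach as the paper: on objects you use that $Q_{\epsilon\otimes\varepsilon}$ is the free quandle on $|\epsilon|+|\varepsilon|$ generators so that $\Hom_\Qdl(-,\AQ{t})$ sends it to a product, and on morphisms you use that side-by-side placement gives $Q_{T\otimes T'}\cong Q_T\star Q_{T'}$ (as in Example \ref{ex:quandle-free-product}) with the restriction maps splitting accordingly. The paper's proof is slightly terser and does not spell out the unit or the coherence bookkeeping, but the substantive argument is identical.
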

\begin{proof}
Given two objects $\epsilon, \varepsilon$ of $\cT$, we have that
\begin{align*}
	\cA_t(\epsilon \otimes \varepsilon) &= \Hom_{\Qdl}(Q_{\epsilon \otimes \varepsilon}, \AQ{t}) = \Hom_{\Qdl}(\textbf{FQ}_{|\epsilon|+|\varepsilon|}, \AQ{t}) \\
    & = (\AQ{t})^{|\epsilon|+|\varepsilon|}= (\AQ{t})^{|\epsilon|}\times (\AQ{t})^{|\varepsilon|}  \\    
  &= \Hom_{\Qdl}(Q_{\epsilon}, \AQ{t}) \oplus \Hom_{\Qdl}(Q_{\varepsilon}, \AQ{t}) = \cA_{t}(\epsilon) \oplus \cA_t(\varepsilon).
\end{align*}
Here, in the second equality, we used that the quandle $Q_{\epsilon\otimes\varepsilon}$ is the free quandle in $|\epsilon|+|\varepsilon|$ generators, so that for the free quandle $\textbf{FQ}_n$ on $n$ elements, $\Hom_{\Qdl}(\textbf{FQ}_n, \AQ{t})=\Hom_{\textbf{Set}}(n,\AQ{t})=(\AQ{t})^n$ from its universal property. The last two equalities can be interpreted in the category $\Vect_\CC$, since the underlying set of $\AQ{t}$ is $\CC$.

Analogously, given two tangles $T$ and $T'$, the tangle $T\otimes T'$ is obtained topologically as the connected sum along a disc, which is contractible. From the presentation of $Q_{T\otimes T'}$ in terms of arcs and relations, we obtain that 
$Q_{T\otimes T'}\cong Q_T\star Q_{T'}$, described in Example \ref{ex:quandle-free-product}. Then
$\R_{\AQ{t}}(T\otimes T')= \Hom_{\Qdl}(Q_T \star Q_{T'}, \AQ{t}) =\R_{\AQ{t}}(T)\oplus \R_{\AQ{t}}(T')$, and the corresponding restriction maps also split as a direct sum, so $\cA_t(T \otimes T') = \cA_t(T) \oplus \cA_t(T')$ as span.
\end{proof}

Additionally, we can also easily identify the object computed by this functor $\cA_t$ for endomorphisms of the unit. Indeed, if $T: \emptyset \to \emptyset$ is a tangle without endpoints, then we can see $T$ as a link in $\RR^3$ and, by its very definition, we have that $\cA_t(T)$ is the span
\[
\xymatrix{
& \R_{\AQ{t}}(T) \ar[dl]\ar[dr] & \\
0 & & 0
}
\]

Notice that the representation variety $\R_{\AQ{t}}(T)$ coincides with the representation variety of $T$ as a link in $S^3$, since compactifying by a point does not change the fundamental group or quandle. However, even more holds in this situation. Extending the discussion from Section \ref{sec:Alexmodulematrix} to tangles, we obtain a natural embedding $\R_{\AQ{t}}(T) \subset \CC^N$, where $N$ is the number of arcs in a planar projection of $T$. Representations of $\pi_1(T^c)$ into $\agl$, derived from a Wirtinger presentation, show again that  $\R_{\AQ{t}}(T)$ is the zero set of certain linear forms $\omega_1, \ldots, \omega_s: \CC^N \to \CC$. The determinant of these forms $\omega_1, \ldots, \omega_s$ measures the existence of non-abelian $\agl$-representations of $\pi_1(T^c)$, analogous to how the Alexander polynomial $\Delta_T (t)$ does for knots and links. In this sense, we shall say that $\cA_{t}$ computes the Alexander polynomial.

Putting together all this information, we get the main result of this section.

\begin{cor}
For any $t \in \CC^*$, 
$$
	\cA_t: \cT \to \Span(\Vect_\CC)
$$
is a monoidal braided functor $\cA_t $ which computes the Alexander polynomial.
\end{cor}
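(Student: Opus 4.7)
Plan. The corollary bundles four properties of $\cA_t$: functoriality, monoidality, the braiding compatibility, and the recovery of the Alexander polynomial. Since the first two are already established in Propositions~\ref{prop:At-functor} and~\ref{prop:At-monoidal}, the plan reduces to verifying the braided property and the Alexander-polynomial claim; I would open the proof by citing those two propositions.

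For the braided property, I would exploit the fact that $\cT$ is generated as a monoidal category by the six elementary morphisms of Remark~\ref{rem:relations-tangles}: every braiding tangle $B_{\epsilon,\varepsilon}$ is a composite of tensor products of the basic positive and negative crossings $X_\pm$. Combined with the monoidality of $\cA_t$ and the identification $Q_{T_1\otimes T_2}=Q_{T_1}\star Q_{T_2}$ from Example~\ref{ex:quandle-free-product}, this reduces braiding-preservation to inspecting $\cA_t(X_+)$ and $\cA_t(X_-)$. For $X_+:(+,+)\to(+,+)$ the quandle $Q_{X_+}$ has two free generators $x_1,x_2$, with the third arc fixed by the Wirtinger relation $x_3=x_1\triangleright_t x_2$, so $\R_{\AQ{t}}(X_+)\cong\CC^2$; a direct calculation gives $\cA_t(X_+)$ as the span with bottom restriction the identity and top restriction $(a_1,a_2)\mapsto(ta_2+(1-t)a_1,\,a_1)$. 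I would then check that these spans satisfy each of the Reidemeister-type relations in \cite[Theorem XII.2.2]{Kasselquantumgroups}: the second Reidemeister move forces invertibility of $\cA_t(X_+)\circ\cA_t(X_-)$, while the third realizes the Yang--Baxter identity, both following at once from the quandle axioms of $\AQ{t}$.

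For the Alexander-polynomial assertion, I would reason as in the paragraph immediately preceding the statement. A knot or link $T$ appears in $\cT$ as an endomorphism $\emptyset\to\emptyset$, so $\cA_t(T)$ collapses to the span $0\leftarrow\R_{\AQ{t}}(T)\to 0$ and the content of the functor is the vector space $\R_{\AQ{t}}(T)$ itself. Extending the Fox-calculus/Wirtinger analysis of Section~\ref{sec:Alexander-module-top} from knots to tangles presents this vector space as the kernel of a linear system whose coefficient matrix coincides (up to elementary operations) with the Alexander matrix $A_T(t)$; Theorem~\ref{thm:2.2} and Corollary~\ref{cor:Alexanderfibration} then identify the jumping locus of $\dim\R_{\AQ{t}}(T)$ with the roots of $\Delta_T(t)$, so the singular locus of the family $\{\cA_t(T)\}_{t\in\CC^*}$ recovers the Alexander polynomial.

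I expect the braiding step to be the main obstacle, precisely because $(\Span(\Vect_\CC),\oplus)$ is symmetric while the braiding in $\cT$ is non-symmetric: checking that the explicit crossing spans $\cA_t(X_\pm)$ respect every Reidemeister relation requires a case-by-case verification using the Alexander-quandle identity $a\triangleright_t b=tb+(1-t)a$. The Alexander-polynomial claim, by contrast, is essentially a direct read-off from the structural results already established earlier in the paper.
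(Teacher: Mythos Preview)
Your overall strategy is correct, but the braiding step is doing unnecessary work compared with the paper's approach. The paper treats the corollary as a summary (``Putting together all this information\ldots''): functoriality from Proposition~\ref{prop:At-functor}, monoidality from Proposition~\ref{prop:At-monoidal}, and the Alexander-polynomial claim from the paragraph immediately preceding the statement. The key point you are missing is the remark right after Proposition~\ref{prop:At-functor}: because $\cA_t$ is defined \emph{intrinsically} on isotopy classes of tangles (via the topological quandle $Q_T$), it is automatically well-defined, and hence all the Reidemeister-type relations of \cite[Theorem XII.2.2]{Kasselquantumgroups} are satisfied for free. There is no need to verify them case-by-case from the explicit form of $\cA_t(X_\pm)$; that computation is carried out later in Section~\ref{sec:generatorsdescription} for illustrative purposes, not as part of the proof.

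Your concern about the symmetric braiding on $(\Span(\Vect_\CC),\oplus)$ versus the non-symmetric one on $\cT$ is well-founded but resolved differently than you suggest. The paper does not equip $\Span(\Vect_\CC)$ with the swap braiding and then check compatibility; rather, the braiding on the target is the one \emph{transported} by $\cA_t$ from $\cT$ (this is made explicit in Section~\ref{sec:generatorsdescription}, where it is noted that ``the braiding induced in $\Span(\Vect_\CC)$ by $\cA_t$ is precisely $\cA_t(X_+)$'' and that this braiding is not symmetric). With that reading, ``braided'' is automatic once functoriality and monoidality are in place. Your treatment of the Alexander-polynomial assertion matches the paper's.
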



\subsection{TQFT for singular vector bundles} \label{sec:TQFTsingular}

In this section, we shall show that the TQFT developed in Section \ref{sec:TQFT} for each fibre over $t \in \CC^*$ of the Alexander fibration can be extended to a global TQFT of singular vector bundles over $\CC^*$.

Recall from Definition \ref{defn:singular-vector-bundle} that, given an algebraic variety $X$, a singular vector bundle $\pi: E \to X$ is the relative spectrum of the symmetric algebra of a coherent sheaf. This means that the variety $E$ is equipped with local trivializing injections 
$\pi^{-1}(U) \to U \times \mathbb{C}^n$, for $U \subset X$, that describe $E$ as a vector bundle with fibres of varying rank. These spaces form an abelian category $\SVB$ with singular vector bundles as objects and fibrewise linear maps as morphisms. In particular, $\SVB$ has pullbacks, which are given by fibered products. 
If we fix the base space $X$, the slice category $\SVB/X$ is actually isomorphic to the category of coherent sheaves over $X$ by the relative spectrum construction. However, we shall focus on singular vector bundles instead of coherent sheaves due to their nature resembling families of vector spaces parametrized by $X$. 

In this way, let us also consider the category $\Span(\SVB)$ of spans of singular vector bundles. As in the case of Section \ref{sec:span-cat}, the objects of $\Span(\SVB)$ are singular vector bundles, whereas a morphism between bundles $E \to X$ and $E' \to X'$ is an equivalence class of spans
$$
\xymatrix{
& E'' \ar[dl]_{(f, \tilde{f})} \ar[dr]^{(g, \tilde{g})} & \\
E & & E'
}
$$
where $(f, \tilde{f}): E'' \to E$ and $(g, \tilde{g}): E'' \to E'$ are singular vector bundle morphisms, and two spans are declared to be equivalent if there is an intertwining isomorphism between the middle bundles. Again, composition of spans is given by pullback and $\Span(\SVB)$ is a monoidal category with respect to fibrewise direct sum.

In this context, we will show that there exists a braided monoidal functor
$$
\cA: \cT \longrightarrow \Span(\SVB)
$$
representing $\agl$-representation varieties, and such that $\mathcal{A}_t: \cT \to \Span(\Vect_\CC)$ arises as its fibres. This assignment is defined as follows.
\begin{itemize} 
\item On objects, we set
$$
	\cA(\epsilon) = \R_{\agl}(\epsilon) := \Hom_{\textbf{Grp}}(\Gamma_\epsilon, \agl),
$$
where $\Gamma_\epsilon = \pi_1(\epsilon^c)$ is the fundamental group of the complement of the points of $[|\epsilon|]\times \{0\} \subset \RR^2$. 
Under the map $\agl \to \CC^*$, we see $\R_{\agl}(\epsilon)$ as a singular vector bundle over $\R_{\CC^*}(\epsilon)$ as in Section \ref{sec:alexander-broader}.

\item On morphisms, given any tangle $T: \epsilon \to \varepsilon$, we consider the representation variety
$$
\R_{\agl}(T)=\Hom_{\textbf{Grp}}(\Gamma_T, \agl),
$$
where $\Gamma_T = \pi_1(\RR^2 \times [0,1] - T)$ is tangle group of $T$. Again, the morphism $\R_{\agl}(T) \to \R_{\CC^*}(T)$ realizes $\R_{\agl}(T)$ as a singular vector bundle.

Since $\partial T^c = \overline{\epsilon^c} \sqcup \varepsilon^c$, we obtain inclusion maps $\Gamma_\epsilon, \Gamma_\varepsilon \rightarrow \Gamma_T$ and therefore singular vector bundle morphisms given by restriction maps $i: \R_{\agl}(T) \to \R_{\agl}(\epsilon)$ and $j: \R_{\agl}(T) \to \R_{\agl}(\varepsilon)$. Thus, we set $\cA(T)$ to be the span in $\Span(\SVB)$
\[
\xymatrix{
& \R_{\agl}(T) \ar[dl]_{i} \ar[dr]^{j} & \\
\R_{\agl}(\epsilon) & & \R_{\agl}(\varepsilon)
}
\]
\end{itemize}

Arguments completely analogous to those of Propositions \ref{prop:At-functor} and $\ref{prop:At-monoidal}$ show that the assignment $\mathcal{A}: \cT \to \Span(\SVB)$ is a monoidal functor. Furtheremore, for a tangle without endpoints $T: \emptyset \to \emptyset$, 
that is a link, we have that $\cA(L)$ is given by
\[
\xymatrix{
& \R_{\agl}(T) \ar[dl]\ar[dr] & \\
0 & & 0
}
\]
Here $\R_{\agl}(T)$ is seen as a vector bundle over $\R_{\CC^*}(T) = \CC^s$, where $s$ is the number of components of the link $T$. As explained in Section \ref{sec:alexander-broader}, the singular loci of this vector bundle are given by the subsets 
$Z_T^k$ in (\ref{eq:Alexander-stratification-link}), and in particular $Z_T^1$ is given by
multivariable Alexander polynomials $\Delta_T(t_1, \ldots, t_s)$. In this sense, we shall say that $\cA$ computes the multivariable Alexander polynomial.

Finally, observe that, for any $t \in \CC^*$, there exists a natural transformation
$$
    \cA_t \Longrightarrow \cA
$$
given by `inserting the $t$-fibre'. First, observe that any vector space can be seen as a singular vector bundle over the singleton variety $\star$, giving an inclusion $\Vect_\CC \hookrightarrow \SVB$ and a fortiori $\Span(\Vect_\CC) \hookrightarrow \Span(\SVB)$. This allows us to see $\cA_t$ as a functor $\cA_t: \cT \to \Span(\SVB)$. 

Now, fix an object $\epsilon$ of $\cT$. Recall that $\AQ{t} \subset \Conj(\agl)$ as the subquandle of matrices with diagonal elements equal to $t$, so we have an inclusion
\begin{align*}
    \tilde{f}_t: \R_{\AQ{t}}(\epsilon) = \Hom_{\Qdl}(Q_{\epsilon}, \AQ{t}) \hookrightarrow &\,
\Hom_{\Qdl}(Q_{\epsilon}, \Conj(\agl)) \\ & \cong \Hom_{\textbf{Grp}}(\As(Q_{\epsilon}), \agl) 
= \R_{\agl}(\epsilon),
\end{align*}
where we have used that $\As(Q_{\epsilon}) = \pi_1(\epsilon^c)$ as groups. 

Observe that if we consider the inclusion of the singleton set $f_t: \star \to \R_{\CC^*}(\agl) = (\CC^*)^{|\epsilon|}$ given by $f_t(\star) = (t, t, \ldots, t)$, we can see $\R_{\AQ{t}}(\epsilon)$ as a (singular) vector bundle over $\star$ and $(f_t, \tilde{f}_t): \R_{\AQ{t}}(\epsilon) \to \R_{\agl}(\epsilon)$ as a singular vector bundle morphism. Therefore, if we set $\tau_t(\epsilon) = (f_t, \tilde{f}_t): \R_{\AQ{t}}(\epsilon) \to \R_{\agl}(\epsilon)$ to be this inclusion, it is straightforward to check that these morphisms commute with spans and thus define a natural transformation $\tau_t: \cA_t \Rightarrow \cA$.

\begin{rem}
    The construction of the functor $\cA$ does not depend on orientations of the tangle, and thus can be promoted to a functor on the category of non-oriented tangles. In sharp contrast, $\cA_t$ does depend on the chosen orientation of the tangle, since the tangle quandle requires it to be defined. This can also be seen from the fact that $H_1(T^c)$ is generated by the meridians and thus identifying it with $\ZZ^s$ (and thus identifying $\R_{\CC^*}(T) \cong (\CC^*)^s$) is equivalent to fixing an orientation on $T$.
\end{rem}


\section{Explicit calculations and Burau representations} \label{sec:explicitcalculations}

\subsection{Action on the generators of $\cT$} \label{sec:generatorsdescription} Let us describe the functor $\cA_t$ explicitly in terms of the generators of the tangle category. Thanks to the orientations in the tangles and their boundaries, we have a natural presentation of the associated fundamental groups as follows.

\begin{itemize}
	\item Objects. Given an object $\epsilon$ of $\cT$ of length $n = |\epsilon|$, we have $\Gamma_\epsilon =\pi_1(\epsilon^c) = \Free{n}$, with generators looping counterclockwise (as seen `from above') for positive oriented points $+$, and loop clockwise for negative oriented points $-$, as shown in Figure \ref{fig:generators-points}.
	
\begin{figure}[h]
    \centering
    \includegraphics[width=0.5\linewidth]{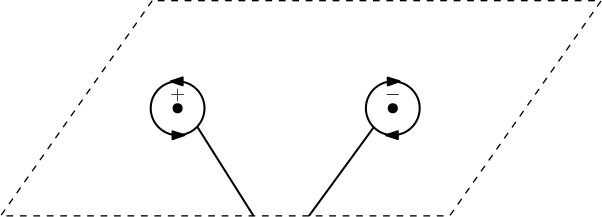}
    \caption{Generators of $\Gamma_\epsilon = \pi_1(\RR^2 - \epsilon)$.}
    \label{fig:generators-points}
\end{figure}

    In the same vein, removing the loops around the points and considering only the paths to a neighborhood of $\epsilon$, we have $Q_\epsilon = \FQ(n)$, so we have a natural identification $\cA_t(\epsilon) = \R_{\AQ{t}}(\epsilon^c) = \CC^n$.
	
	\item Morphisms. Given a tangle $T: \epsilon \to \varepsilon$, we consider generators of $\Gamma_T = \pi_1(T^c)$ looping around the strands with the orientation given by the `right-hand rule', as shown in Figure \ref{fig:generators-tangles}.
	
\begin{figure}[h]
    \centering
    \includegraphics[width=0.2\linewidth]{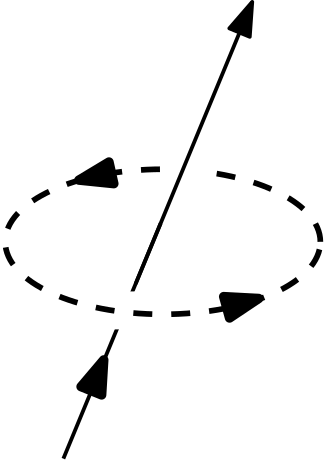}
    \caption{Generators of $\Gamma_T = \pi_1(\RR^2 \times [0,1] - T)$.}
    \label{fig:generators-tangles}
\end{figure}

    Analogously, this also defines generators of the quandle $Q_T$, leading to a description of $\R_{\AQ{t}}(T)$.
\end{itemize}

\subsubsection*{Braidings}

Let us consider the tangle $X_+: (+, +) \to (+, +)$, as depicted in Figure \ref{fig:X-plus-generators}.

\begin{figure}[h]
    \centering
    \includegraphics[width=0.23\linewidth]{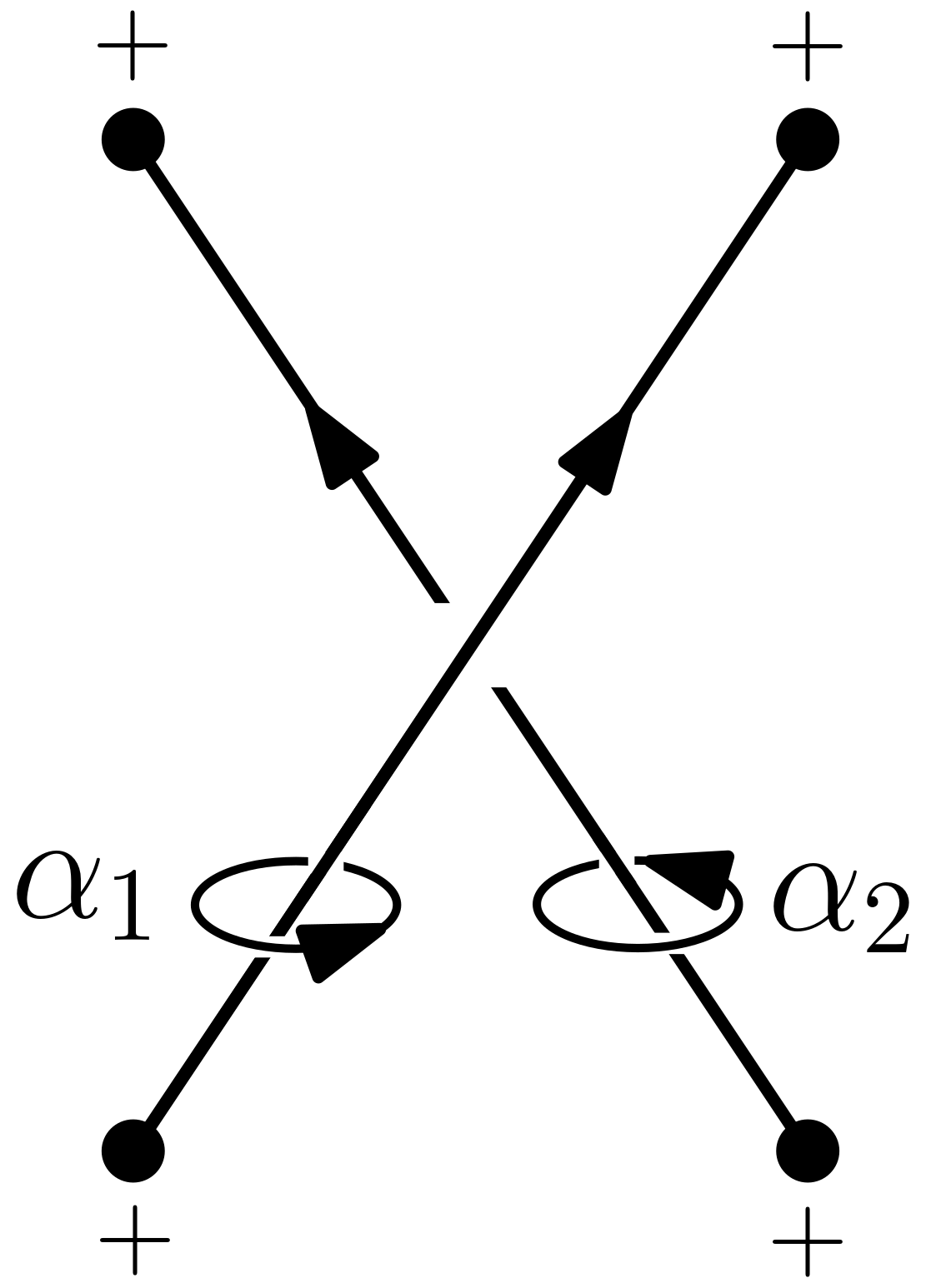}
    \caption{The tangle $X_+: (+, +) \to (+, +)$ and the generators of $\Gamma_{X_+} = \pi_1(\RR^2 \times [0,1] - X_+)$.}
    \label{fig:X-plus-generators}
\end{figure}

Using the generators $\pi_1(X_+^c) = \langle \alpha_1, \alpha_2\rangle \cong \Free{2}$ depicted in Figure \ref{fig:X-plus-generators}, we have that the restriction maps are the following.
$$
\begin{array}{rcl}
	\textrm{Incoming boundary:} & \Free{2} = \pi_1({(+,+)^c}) \to \Free{2} = \pi_1(X_+^c), & (\beta_1, \beta_2) \mapsto (\alpha_1, \alpha_2), \\
	\textrm{Outgoing boundary:} & \Free{2} = \pi_1({(+,+)^c}) \to \Free{2} = \pi_1(X_+^c), & (\beta_1, \beta_2) \mapsto (\alpha_1\alpha_2\alpha_1^{-1}, \alpha_1) = (\alpha_1 \triangleright \alpha_2, \alpha_1).
\end{array}
$$
Therefore, at the level of the quandle representation varieties, the induced span $\cA_t(X_+): \CC^2 \to \CC^2$ is given by
\[
\xymatrix{
& \CC^2 \ar[ld]_{\id} \ar[rd]^{f_+}& \\
\CC^2 & & \CC^2
}
\]
where $f_+(a_1, a_2) = (a_1 \triangleright_t a_2, a_1)=((1-t)a_1 + ta_2, a_1)$. In particular, this shows that the braiding induced in $\Span(\Vect_\CC)$ by $\cA_t$ is precisely $\cA_t(X_+)$. Notice that this braiding is not symmetric in general, since $\cA_t(X_+)^2$ is the span $\CC^2 \stackrel{\id}{\longleftarrow} \CC^2 \stackrel{f_+^2}{\longrightarrow} \CC^2$, where $f_+^2(a_1, a_2) = (((1-t)^2 + t)a_1 + t(1-t)a_2, (1-t)a_1 + ta_2)$, which is not symmetric except for $t = 1$.

Analogously for the tangle $X_-: (+, +) \to (+, +)$, we have that the inclusion of the incoming boundary is the identity $\id: \Free{2} = \pi_1({(+,+)^c}) \to \Free{2} = \pi_1(X_-^c)$, but the inclusion of the outgoing boundary is $\Free{2} = \pi_1({(+,+)^c}) \to \Free{2} = \pi_1(X_-^c)$, $(\beta_1, \beta_2) \mapsto (\alpha_2, \alpha_2^{-1}\alpha_1\alpha_2) = (\alpha_2, \alpha_1 \triangleleft \alpha_2)$. Therefore, the induced span is
\[
\xymatrix{
& \CC^2 \ar[ld]_{\id} \ar[rd]^{f_-}& \\
\CC^2 & & \CC^2
}
\]
where $f_-(a_1, a_2) = (a_2, a_1 \triangleleft_t a_2)=(a_2, (1-t^{-1})a_2+t^{-1}a_1)$. 

\subsubsection*{Evaluation and coevaluation maps}\label{sec:duality}

In this section, we compute the image of the maps $\curvearrowright: (+, -) \to \emptyset$ and $\curvearrowleft: (-, +) \to \emptyset$, as well as $\upcurvearrowright: \emptyset \to (+, -)$ and $\upcurvearrowleft: \emptyset \to (-, +)$.

First, for $\curvearrowright$, observe that $\pi_1(\curvearrowright^c) = \langle \alpha \rangle = \ZZ$, and then the inclusion map $\pi_1({(+,-)^c}) = \Free{2} \to \pi_1(\curvearrowright^c) = \ZZ$ is given by $\beta_1,\beta_2 \mapsto \alpha$, as shown in Figure \ref{fig:dual}.

\begin{figure}[h]
    \centering
    \includegraphics[width=0.25\linewidth]{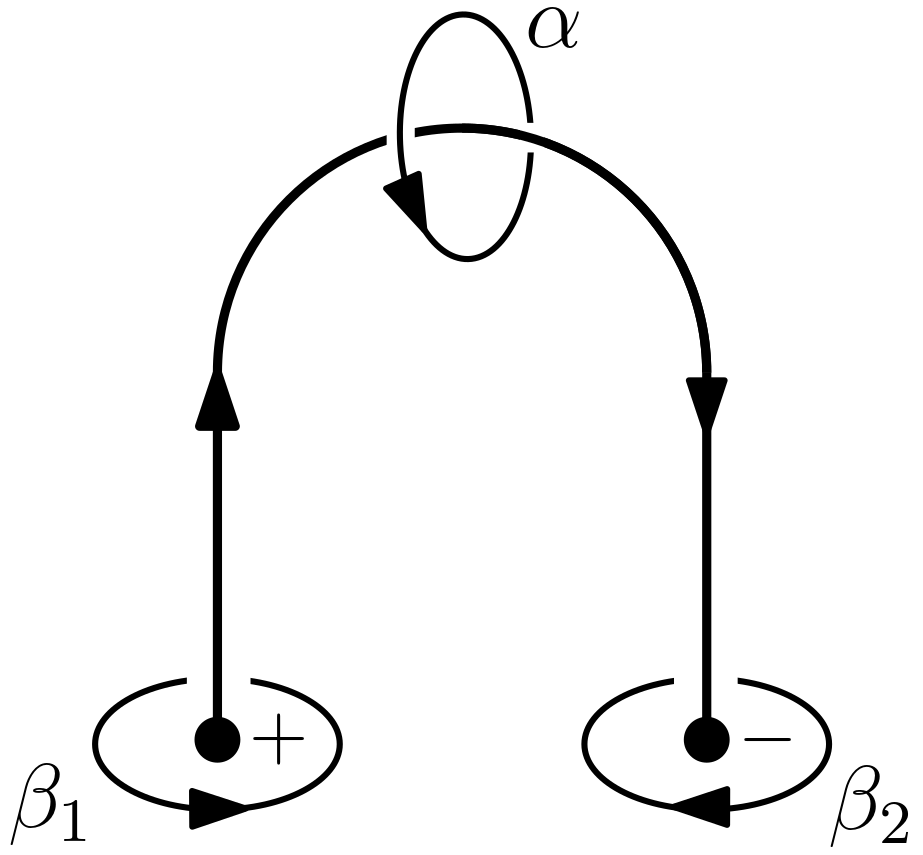}
    \caption{The tangle $\curvearrowright: (+, -) \to \emptyset$ and the generators of $\Gamma_{\curvearrowright} = \pi_1(\RR^2 \times [0,1] - \curvearrowright)$.}
    \label{fig:dual}
\end{figure}

Therefore, the associated span $\cA_t(\curvearrowright)$ is
\[
\xymatrix{
& \CC \ar[ld]_{\Delta} \ar[rd]& \\
\CC^2 & & 0
}
\]
where $\Delta(a) = (a, a)$ is the diagonal map. Proceeding analogously for the map $\curvearrowleft$, we find that $\cA_t(\curvearrowleft) = \cA_t(\curvearrowright)$. Additionally, both $\cA_t(\upcurvearrowright)$ and $\cA_t(\upcurvearrowright)$ are the span
\[
\xymatrix{
& \CC \ar[rd]^{\Delta} \ar[ld]& \\
0 & & \CC^2
}
\]

Following the previous construction, it is in general possible to obtain the evaluation and coevaluation maps attached to any object $\epsilon = (\epsilon_1, \ldots, \epsilon_n)$ of $\cT$. Indeed, the evaluation is a tangle $\textup{ev}_\epsilon: \epsilon \otimes \epsilon^{*} \to \emptyset$, where $\epsilon^* = (-\epsilon_n, \ldots, -\epsilon_1)$ is the dual object, given by `bending the identity', as in Figure \ref{fig:dual-general}.

\begin{figure}[h]
    \centering
    \includegraphics[width=0.35\linewidth]{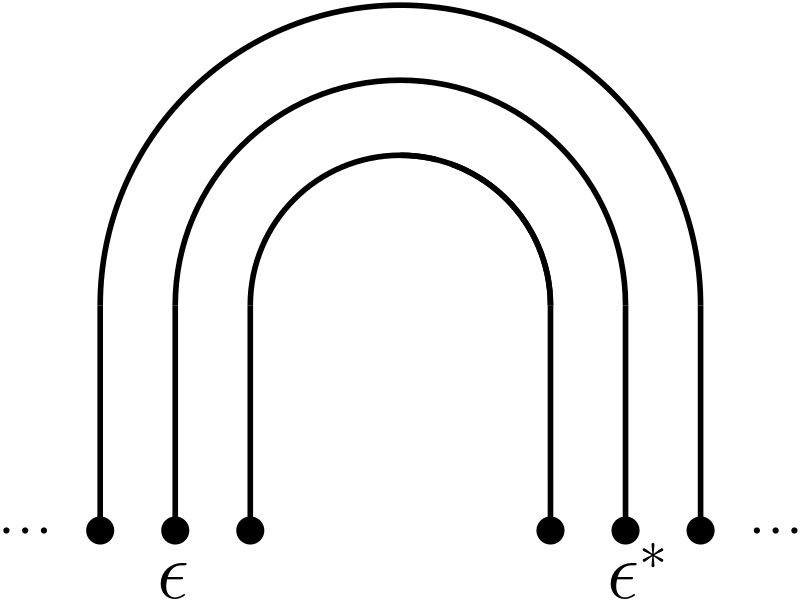}
    \caption{The evaluation tangle $\textup{ev}_\epsilon: \epsilon \otimes \epsilon^{*} \to \emptyset$.}
    \label{fig:dual-general}
\end{figure}

Observe that there is a natural identification $\R_{\AQ{t}}(\textup{ev}_\epsilon) = \R_{\AQ{t}}(\epsilon)$ and the span $\cA_t(\textup{ev}_\epsilon)$ is given by
\[
\xymatrix{
& \R_{\AQ{t}}(\epsilon) \ar[ld]_{\Delta} \ar[rd]& \\
\R_{\AQ{t}}(\epsilon)^{\oplus 2} & & 0
}
\]
where the morphism $\Delta: \R_{\AQ{t}}(\epsilon) \to \R_{\AQ{t}}(\epsilon)^{\oplus 2}$ is given by $\Delta(z) = (z, z^*)$, with $z^* = (z_n, \ldots, z_1)$ if $z = (z_1, \ldots, z_n)$. Observe that this morphism makes sense with since $\cA_t( \epsilon \otimes \epsilon^{*}) = \cA_t(\epsilon) \oplus \cA_t(\epsilon^{*}) = \cA_t(\epsilon)^{\oplus 2}$.

Analogously for the coevaluation tangle $\textup{coev}_\epsilon: \emptyset \to \epsilon^* \otimes \epsilon$, we have that $\cA_t(\textup{coev}_\epsilon)$ is the span 
\[
\xymatrix{
& \R_{\AQ{t}}(\epsilon) \ar[rd]^{\Delta} \ar[ld]& \\
0 & & \R_{\AQ{t}}(\epsilon)^{\oplus 2}
}
\]


\subsection{The braid subcategory} \label{sec:braids}

The category $\cT$ of tangles admits a distinguished subcategory $\cB$, called the \emph{category of braids}. It contains the objects of the form $(+, \ldots, +)$ of $\cT$, that we will simply denote by its length $n$. As morphisms, we only allow tangles always moving upwards i.e.\ those tangles whose tangent line is never horizontal, as shown in Figure \ref{fig:braid}.

\begin{figure}[h]
    \centering
    \includegraphics[width=0.2\linewidth]{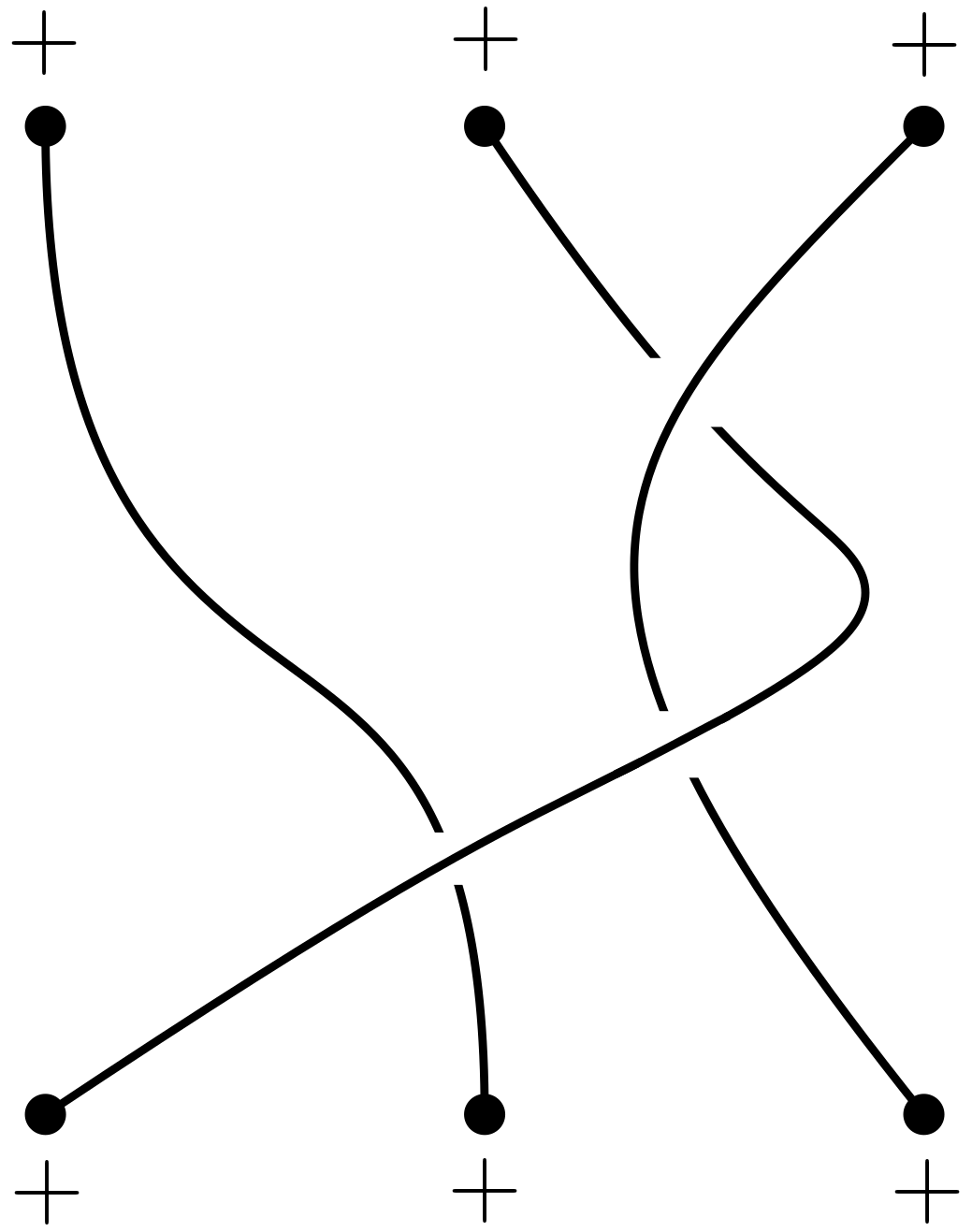}
    \caption{Abstract representation of a braid.}
    \label{fig:braid}
\end{figure}

In other words, we have that $\Hom_{\cB}(n, n) = \mathrm{B}_n$ is the Artin braid group, but $\Hom_{\cB}(n,m) = 0$ for $n \neq m$.

A crucial property is that, for braids, the span that we get has a very easy description. 

\begin{prop}
Given a braid $B$, the span $\cA_t(B)$ can be taken to be a basic span. 
\end{prop}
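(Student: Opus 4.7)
The plan is to show that for a braid $B: n \to n$, both restriction maps $i,j:\R_{\AQ{t}}(B) \to \R_{\AQ{t}}(n)=\CC^n$ are isomorphisms. By the remark made after diagram \eqref{diag:spanproduct}, whenever the left leg of a span is an isomorphism the span is equivalent to a basic span in a unique way, and this will finish the proof. In fact we will get slightly more: the span is basic via the isomorphism $i$, and the resulting linear map is $j \circ i^{-1}: \CC^n \to \CC^n$.

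The core geometric fact is that a braid $B$ only moves upward; equivalently, the projection $\RR^2\times[0,1]\to[0,1]$ restricts to a locally trivial fibration on the complement $B^c$, with fibre the punctured plane $\RR^2$ minus $n$ points. Consequently $B^c$ is the mapping cylinder of a diffeomorphism $\phi_B$ of the $n$-punctured plane, and both boundary inclusions induce isomorphisms $\Free{n}=\pi_1(\epsilon^c) \xrightarrow{\cong} \pi_1(B^c)=\Free{n}$, the top inclusion realizing the classical Artin action of $B$ on $\Free{n}$. An equivalent combinatorial argument uses a Wirtinger-type presentation of $Q_B$: since $B$ moves upward, every crossing in a planar diagram of $B$ has two under/over arcs coming from below, together with one outgoing arc on top, so each crossing relation (of the form $x_{i+1}= x_j\triangleright x_i$ or $x_{i+1} = x_i \triangleleft x_j$) allows us to eliminate the outgoing arc in terms of the incoming ones. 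Propagating from the bottom upward shows that $Q_B$ is freely generated by the $n$ arcs meeting the incoming boundary, i.e.\ the incoming inclusion $Q_n=\FQ_n \hookrightarrow Q_B$ is an isomorphism, and analogously so is the outgoing one.

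Now apply the functor $\Hom_{\Qdl}(-,\AQ{t})$. The inclusions $Q_n \to Q_B$ become restriction maps $i,j: \R_{\AQ{t}}(B) \to \R_{\AQ{t}}(n)=\CC^n$, and these are isomorphisms of complex vector spaces since the underlying quandle inclusions are. In particular the span
$$
\xymatrix{
& \R_{\AQ{t}}(B) \ar[dl]_{i}^{\cong} \ar[dr]^{j} & \\
\CC^n & & \CC^n
}
$$
satisfies the hypothesis that its left leg is an isomorphism, so by the equivalence observed in Section~\ref{sec:span-cat} it is uniquely equivalent to the basic span
$$
\xymatrix{
& \CC^n \ar[dl]_{\id} \ar[dr]^{j\circ i^{-1}} & \\
\CC^n & & \CC^n
}
$$
as desired.

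The main subtle point is the claim that the bottom (and top) inclusion $\FQ_n \hookrightarrow Q_B$ is an isomorphism of quandles. The topological argument via the mapping-cylinder structure makes this transparent at the level of groups, so one just has to translate it to quandles; alternatively, the Wirtinger-style elimination argument is elementary but requires checking that no extra relations appear at the top boundary, which follows again from the upward-moving property of braids (every top-boundary generator is expressed by the propagation as a word in the bottom generators and nothing forces an extra identification).
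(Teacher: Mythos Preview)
Your proof is correct and follows essentially the same approach as the paper: both argue that, because a braid only moves upward, the incoming boundary inclusion $Q_n \hookrightarrow Q_B$ (equivalently $\Free{n} \to \pi_1(B^c)$) is an isomorphism, so the left leg of the span is an isomorphism and the span is equivalent to a basic one. The paper's version is terser (it simply asserts the incoming generators generate $\pi_1(B^c)$ and takes the left leg to be the identity), whereas you supply the mapping-cylinder and Wirtinger-elimination justifications explicitly and also note that $j$ is an isomorphism---a fact the paper records parenthetically.
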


\begin{proof}
If $B: n \to n$ is a braid, then the fundamental group of its complement $\Gamma_B = \pi_1(B^c)$ is generated by loops around the strands near the incoming boundary. Hence, if we take these generators coinciding with those corresponding to $\G_n =\pi_1(n^c)$, we have that the inclusion maps for both the fundamental group $\G_n \to \G_B$ and the quandle $Q_n \to Q_B$ for the incoming boundary is the identity. Hence, the span $\cA_t(B)$ has the form
\[
\xymatrix{
& \R_{\AQ{t}}(n) \ar[ld]_{\id} \ar[rd]^{f} & \\
\R_{\AQ{t}}(n) & & \R_{\AQ{t}}(n)
}
\]
for a certain map $f: \R_{\AQ{t}}(n) \to \R_{\AQ{t}}(n)$ (which is actually an isomorphism). Therefore, the span $\cA_{t}(B)$ is basic.
\end{proof}

\begin{cor}\label{cor:TQFT-braids}
The functor $\cA_t: \cT \to \Span(\Vect_\CC)$ factorizes through $\cB$ to give a monoidal braided functor
$$
	\cA_t|_{\cB}: \cB \longrightarrow \Vect_\CC.
$$
In other words, the following diagram commutes
\[
\xymatrix{
\cT \ar[rr]^{\cA_t} & & \Span(\Vect_\CC) \\
\cB \ar[rr]_{\cA_t|_{\cB}}\ar@{_{(}-{>}}[u] & & \Vect_\CC\ar@{_{(}-{>}}[u]
}
\]
\end{cor}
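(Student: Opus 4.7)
The plan is to exploit the preceding proposition, which shows that for every braid $B$ the span $\cA_t(B)$ can be represented by a basic span, i.e.\ one whose left leg is the identity. Since the basic spans form the image of the embedding $\Vect_{\CC} \hookrightarrow \Span(\Vect_{\CC})$ described in Section \ref{sec:span-cat}, this immediately suggests defining $\cA_t|_{\cB}$ on objects by $\cA_t|_{\cB}(n) = \CC^n = \R_{\AQ{t}}(n)$ and on morphisms by sending a braid $B: n \to n$ to the unique linear map $f_B: \CC^n \to \CC^n$ such that the associated basic span equals $\cA_t(B)$.

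To show this assignment is a functor, I would first check identities: the tangle $\id_n \in \cB$ induces the identity span, which corresponds to the identity linear map. For composition, given braids $B_1: n \to n$ and $B_2: n \to n$, by definition $\cA_t(B_2 \circ B_1) = \cA_t(B_2) \circ \cA_t(B_1)$ as spans. Both $\cA_t(B_1)$ and $\cA_t(B_2)$ are basic, and the pullback computation carried out in diagram \eqref{diag:spanproduct} shows that the pullback of two basic spans is again basic, and corresponds exactly to the composition of their associated linear maps. Hence $f_{B_2 \circ B_1} = f_{B_2} \circ f_{B_1}$, establishing functoriality. The commutativity of the diagram in the statement then follows tautologically, since the vertical embeddings on both sides send a vector space to itself and a linear map to the corresponding basic span.

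To promote $\cA_t|_{\cB}$ to a monoidal braided functor, I would rely on Proposition \ref{prop:At-monoidal}, which gives $\cA_t(B \otimes B') = \cA_t(B) \oplus \cA_t(B')$ as spans in $\Span(\Vect_{\CC})$. Since the direct sum of two basic spans is basic (with linear map equal to the direct sum of the linear maps), the monoidal structure descends to $\Vect_{\CC}$ with the direct sum. For the braided structure, I would invoke the explicit computations of Section \ref{sec:generatorsdescription}: the generator $X_+$ is sent to the basic span associated with $f_+(a_1, a_2) = ((1-t)a_1 + ta_2, a_1)$, which is an isomorphism (as is $f_-$, its inverse up to the analogous sign). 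Thus the braiding of $\cT$ restricted to $\cB$ maps to a natural family of isomorphisms in $\Vect_{\CC}$ satisfying the hexagon axioms, since these axioms already hold in $\Span(\Vect_{\CC})$ and the embedding is faithful on basic spans.

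The only subtle point, and the potential obstacle, is verifying that the factorization is well-defined on equivalence classes of spans rather than on representatives: two equivalent basic spans must induce the same linear map. This reduces to observing that any equivalence between basic spans $\CC^n \stackrel{\id}{\leftarrow} \CC^n \stackrel{f}{\to} \CC^n$ and $\CC^n \stackrel{\id}{\leftarrow} \CC^n \stackrel{g}{\to} \CC^n$ is implemented by an isomorphism $\varphi: \CC^n \to \CC^n$ with $\varphi = \id$ (forced by the left leg) and $g \circ \varphi = f$, so $f = g$. Everything else is bookkeeping built on the key structural fact supplied by the preceding proposition.
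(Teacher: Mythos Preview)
Your proposal is correct and follows exactly the approach the paper intends: the corollary is stated without proof, as an immediate consequence of the preceding proposition together with the embedding $\Vect_\CC \hookrightarrow \Span(\Vect_\CC)$ and the composition rule for basic spans in diagram \eqref{diag:spanproduct}. You have simply spelled out the routine verifications (functoriality, monoidality, braiding, and well-definedness on equivalence classes) that the paper leaves implicit.
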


\begin{rem}
In the language of quantization of invariants, the functor $\cA_t$ gives rise to a Topological Quantum Field Theory on $\cB$ quantizing the Alexander polynomial.
\end{rem}

According to Corollary \ref{cor:TQFT-braids}, a braid $B: n \to n$ gives rise to a linear map $\cA_t(B): \cA_t(n) \to \cA_t(n)$, which is explicitly the restriction map
$$
	\cA_t(B): \R_{\AQ{t}}(n) = \R_{\AQ{t}}(B^c) = \CC^n \longrightarrow \R_{\AQ{t}}(n) = \CC^n.
$$
It turns out that the map above is a well-studied map in the theory of braids, known as the Burau representation of $\mathrm{B}_n$ \cite{burau1935zopfgruppen}.

\begin{prop}
The map
$$
	\cA_t|_{\mathrm{B}_n}: \Hom_{\cB}(n,n) = \mathrm{B}_n \longrightarrow \GL{n}(\CC)
$$
coincides with the Burau representation of the braid group $\mathrm{B}_n$.
\end{prop}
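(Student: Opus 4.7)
The plan is to verify the claim on generators. The braid group $\mathrm{B}_n$ is generated by the Artin generators $\sigma_1, \ldots, \sigma_{n-1}$, where $\sigma_i$ corresponds in $\cB \subset \cT$ to the tangle $\id_{i-1} \otimes X_+ \otimes \id_{n-i-1}$, i.e.\ a positive crossing between the $i$-th and $(i+1)$-th strands with all other strands going straight upwards. Since $\cA_t$ is monoidal (Proposition \ref{prop:At-monoidal}) and $\cA_t|_{\cB}$ factorizes through $\Vect_\CC$ (Corollary \ref{cor:TQFT-braids}), we have
\[
\cA_t(\sigma_i) = \id_{\CC^{i-1}} \oplus \cA_t(X_+) \oplus \id_{\CC^{n-i-1}}.
\]

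The first key step is to recall from Section \ref{sec:generatorsdescription} that the span associated to $X_+$ reduces (as a basic span, using that $\cA_t|_{\cB}$ lands in $\Vect_\CC$) to the linear isomorphism $f_+: \CC^2 \to \CC^2$ given by $f_+(a_1, a_2) = ((1-t)a_1 + ta_2,\, a_1)$, whose matrix in the standard basis is
\[
\begin{pmatrix} 1-t & t \\ 1 & 0 \end{pmatrix}.
\]
Hence $\cA_t(\sigma_i)$ acts on $\CC^n$ as the identity outside the $i, i+1$ block and as the above matrix on the $(i, i+1)$ block.

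The second step is to compare with the classical (unreduced) Burau representation, as defined e.g.\ in \cite{burau1935zopfgruppen, KasselTuraevbraidgroups}, which sends $\sigma_i$ precisely to the matrix consisting of the identity outside the $i, i+1$ block and the same $2\times 2$ matrix $\left(\begin{smallmatrix} 1-t & t \\ 1 & 0 \end{smallmatrix}\right)$ on it. The matching is then immediate for the generators.

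Finally, it remains to check that the braid relations $\sigma_i\sigma_{i+1}\sigma_i = \sigma_{i+1}\sigma_i\sigma_{i+1}$ and $\sigma_i\sigma_j = \sigma_j\sigma_i$ for $|i-j|\geq 2$ are respected; but this is automatic since $\cA_t$ is a well-defined functor on $\cT$, and therefore on $\cB$. The main (and only) subtle point is to fix conventions: different sources define the Burau representation up to the change $t \leftrightarrow t^{-1}$, transposition, or a sign, so one should state explicitly which convention is adopted and verify that our crossing $X_+$ corresponds to the chosen normalization. Once this bookkeeping is fixed, the proof is complete.
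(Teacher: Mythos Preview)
Your proposal is correct and follows essentially the same argument as the paper: both compute $\cA_t$ on the Artin generators via monoidality, identify the resulting $2\times 2$ block as $\left(\begin{smallmatrix} 1-t & t \\ 1 & 0 \end{smallmatrix}\right)$ from the explicit description of $\cA_t(X_+)$, and invoke functoriality to conclude. Your additional remark about conventions is a sensible caveat but is not present in the paper's proof.
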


\begin{proof} 
Recall that $\mathrm{B}_n$ is generated by the simple crossings $e_i=\id\otimes \id \otimes \ldots \otimes X_+ \otimes \ldots \otimes \id$, where $X_+$ is placed in the $i$-th factor for $i = 1, \ldots, n-1$. Therefore, for any $i$, we have 
$$
\cA_t(e_i)=\cA_t(\id)\oplus\cA_t(\id)\oplus \ldots \oplus \cA_t(X_+) \oplus \ldots \oplus \cA_t(\id)=\left(\begin{array}{cccc}
\id_{i-1} & 0 & 0 & 0 \\
0 & 1-t & t & 0 \\
0 & 1 & 0 & 0 \\
0 & 0 & 0 & \id_{n-i-1}
\end{array}\right).
$$
This is exactly the matrix associated to the generators of $\mathrm{B}_n$ in the Burau representation, as it is typically defined. The fact that $\cA_t|_{\mathrm{B}_n}: \mathrm{B}_n \to \GL{n}(\CC)$ is a group homomorphism follows directly from the functoriality of $\cA_t|_{\cB}$.
\end{proof}

Given an endomorphism $f: \CC^n \to \CC^n$ (for instance, $f = \cA_t(B): \CC^n \to \CC^n$ the Burau representation for a braid $B \in \mathrm{B}_n$), we define the \emph{span trace} of $f$ as the span
$$
	\textup{STr}(f) = \cA_t(\textup{ev}_n) \circ (f \oplus \cA_t(\id_{n})) \circ \cA_t(\textup{coev}_n),
$$
which is an endospan of $0$.

\begin{prop}\label{prop:span-trace}
For any $f: \CC^n \to \CC^n$, its span trace $\textup{STr}(f)$ is the span
\[
\xymatrix{
& \Fix(f) \ar[ld] \ar[rd]& \\
0 & & 0
}
\]
where $\Fix(f) = \{v \in \CC^n \,|\, f(v) = v\}$ is the fix locus of $f$.
\end{prop}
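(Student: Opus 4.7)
The plan is to compute the composition $\textup{STr}(f) = \cA_t(\textup{ev}_n) \circ (f \oplus \cA_t(\id_n)) \circ \cA_t(\textup{coev}_n)$ by performing the two required pullbacks in order, using the explicit descriptions of $\cA_t(\textup{ev}_n)$ and $\cA_t(\textup{coev}_n)$ obtained in Section~\ref{sec:duality}. First, I would unfold the middle morphism: viewing $f: \CC^n \to \CC^n$ as a basic span (identity on the left, $f$ on the right), and noting that $\cA_t(\id_n)$ is the identity span, the direct sum is the basic span with left leg $\id_{\CC^{2n}}$ and right leg $f \oplus \id_{\CC^n}: \CC^{2n} \to \CC^{2n}$. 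The coevaluation span is $0 \leftarrow \CC^n \xrightarrow{\Delta} \CC^{2n}$ with $\Delta(z) = (z, z^*)$, and the evaluation span is $\CC^{2n} \xleftarrow{\Delta} \CC^n \to 0$ with the same $\Delta$.

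For the first pullback, composing $\cA_t(\textup{coev}_n)$ with $f \oplus \cA_t(\id_n)$, I would compute
\[
\CC^n \times_{\CC^{2n}} \CC^{2n} = \{(z,w) \in \CC^n \times \CC^{2n} \,|\, (z, z^*) = w\} \cong \CC^n,
\]
since the left leg of the second span is the identity. This identifies the composite with the basic span $0 \leftarrow \CC^n \xrightarrow{g} \CC^{2n}$, where $g(z) = (f \oplus \id)(z, z^*) = (f(z), z^*)$.

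For the second pullback, composing this with $\cA_t(\textup{ev}_n)$, I would compute
\[
\CC^n \times_{\CC^{2n}} \CC^n = \{(z,w) \in \CC^n \times \CC^n \,|\, (f(z), z^*) = (w, w^*)\}.
\]
The equation in the second block of coordinates reads $z^* = w^*$, which, since reversal is a bijection, forces $w = z$; the first block then gives $f(z) = z$. Thus the pullback is the graph of $\id$ restricted to $\Fix(f)$, canonically identified with $\Fix(f)$ itself, and both legs are the unique maps to $0$. This yields exactly the span claimed.

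The argument is essentially routine once one is careful with the duality map $z \mapsto z^*$; the main (and only) subtle point is observing that the condition $z^* = w^*$ is equivalent to $w = z$, so that the two constraints enforced by the evaluation pullback combine precisely into the fixed point condition $f(z) = z$. No cohomological or deeper geometric input is required beyond the explicit formulas already computed in Section~\ref{sec:generatorsdescription}.
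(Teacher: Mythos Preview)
Your proof is correct and follows essentially the same approach as the paper: both arguments unwind the two pullbacks defining the span composition and arrive at the condition $f(z)=z$. The only difference is cosmetic---you keep explicit track of the reversal $z\mapsto z^*$ in the diagonal map $\Delta(z)=(z,z^*)$ and note that $z^*=w^*$ forces $z=w$, whereas the paper writes the second-block condition directly as $v_1=v_2$; the computations are otherwise identical.
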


\begin{proof}
The proof is just a straightforward calculation. The span $f \oplus \cA_t(\id_{n})$ is given by
\[
\xymatrix{
& \CC^{2n} \ar[ld]_{\id_{2n}} \ar[rd]^{f \oplus \id_n}& \\
\CC^{2n} & & \CC^{2n}
}
\]
Therefore, when we pre-compose it with the coevaluation $\cA_t(\textup{coev}_n): 0 \stackrel{}{\longleftarrow} \CC^n \stackrel{\Delta}{\longrightarrow} \CC^{2n}$, we get the span
\[
\xymatrix{
& & \CC^n \ar@{-->}[dl]_{\id_{2n}} \ar@{-->}[dr]^{\Delta} & & \\
& \CC^n \ar[dl] \ar[dr]^{\Delta} & & \CC^{2n} \ar[dl]_{\id_{2n}} \ar[dr]^{f \oplus \id_n} & \\
0 & & \CC^{2n} & & \CC^{2n}
}
\]
Thus, post-composing this span with the evaluation  $\cA_t(\textup{ev}_n): \CC^{2n} \stackrel{\Delta}{\longleftarrow} \CC^n \stackrel{}{\longrightarrow} 0$, we get
\[
\xymatrix{
& & \CC^n \oplus_{\CC^{2n}} \CC^n \ar@{-->}[dl] \ar@{-->}[dr] & & \\
& \CC^n \ar[dl] \ar[dr]^{(f \oplus \id_n) \circ \Delta} & & \CC^{n} \ar[dl]_{\Delta} \ar[dr] & \\
0 & & \CC^{2n} & & 0
}
\]
where the top space is
\begin{align*}
	\CC^n \oplus_{\CC^{2n}} \CC^n &= \left\{(v_1, v_2) \in \CC^n \oplus \CC^n \,\,|\,\, (f \oplus \id_n) \circ \Delta(v_1) = \Delta(v_2) \right\} \\
    & = \left\{(v_1, v_2) \in \CC^n \oplus \CC^n \,\,|\,\, f(v_1) = v_2, \, v_1 = v_2 \right\} =  \left\{v \in \CC^n  \,\,|\,\, f(v) = v \right\} = \Fix(f),
\end{align*}
as we wanted to prove.
\end{proof}

Recall that given a braid $B \in \mathrm{B}_n$, its \emph{closure link} $\overline{B}$ is the link obtained by joining all the upper endpoints of $B$ with the bottom endpoints, in the same order, as in Figure \ref{fig:closure-link}.

\begin{figure}[h]
    \centering
    \includegraphics[width=0.25\linewidth]{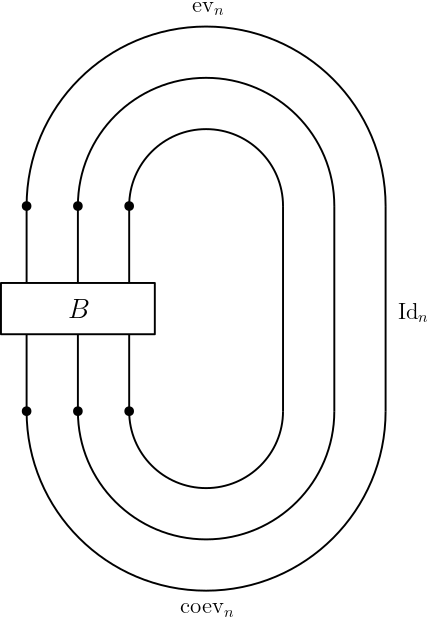}
    \caption{Closure link of a braid $B$.}
    \label{fig:closure-link}
\end{figure}

In other words, as a tangle we have that
\begin{equation}\label{eq:closure-braid}
	\overline{B} = \textup{ev}_n \circ (B \oplus \id_{n}) \circ \textup{coev}_n.
\end{equation}
Then, a direct consequence of Proposition \ref{prop:span-trace} is the following.

\begin{cor}
The Alexander module of the closure link of a braid is the coordinate ring of the fix locus of its Burau representation.
\end{cor}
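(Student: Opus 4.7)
The plan is to apply the functor $\cA_t$ to the tangle decomposition of the closure $\overline{B}$ given in (\ref{eq:closure-braid}) and to combine this with results already established. By functoriality and monoidality (Proposition \ref{prop:At-monoidal}), the identity $\overline{B} = \textup{ev}_n \circ (B \oplus \id_n) \circ \textup{coev}_n$ gives
\[
    \cA_t(\overline{B}) = \cA_t(\textup{ev}_n) \circ \bigl(\cA_t(B) \oplus \cA_t(\id_n)\bigr) \circ \cA_t(\textup{coev}_n) = \textup{STr}\bigl(\cA_t(B)\bigr),
\]
which by Proposition \ref{prop:span-trace} is the endospan of $0$ whose middle object is the fix locus $\Fix(\cA_t(B)) \subset \CC^n$ of the Burau representation of $B$.

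On the other hand, by the very definition of $\cA_t$ on endomorphisms of the empty object, $\cA_t(\overline{B})$ is also the endospan of $0$ with middle object $\R_{\AQ{t}}(\overline{B})$, the quandle representation variety of $\overline{B}$ into the Alexander quandle $\AQ{t}$. Comparing the two presentations yields a natural isomorphism $\Fix(\cA_t(B)) \cong \R_{\AQ{t}}(\overline{B})$ for every $t \in \CC^*$. Letting $t$ vary, Corollary \ref{cor:stalks} (together with Remark \ref{rmk:fibrevectorspace-link} in the multicomponent case, applied to the diagonal of $(\CC^*)^s$) promotes these pointwise identifications to an isomorphism of singular vector bundles over $\CC^*$ between the family of fix loci $\{\Fix(\cA_t(B))\}_{t}$ and the (diagonal) Alexander fibration of $\overline{B}$.

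It only remains to pass to coordinate rings. Theorem \ref{thm:coordinate-ring-agl1}, or equivalently Theorem \ref{thm:Alexander-coherent}, identifies the $\CC[t,t^{-1}]$-algebra of regular functions on the Alexander fibration with the symmetric algebra over $\CC[t,t^{-1}]$ of the Alexander module of $\overline{B}$; restricting to linear sections recovers the Alexander module itself as the coordinate ring of the fix loci in the sense of the statement.

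I expect no substantial obstacle here: the argument is essentially a direct substitution into the functorial machinery of Sections \ref{sec:tanglesandalexquandles} and \ref{sec:explicitcalculations}, with Proposition \ref{prop:span-trace} doing the decisive computation. The only point requiring mild care is the multicomponent case, where one must observe that the classical single-variable Burau representation corresponds precisely to the diagonal $t_1 = \cdots = t_s = t$ inside the multivariate Alexander fibration, so that its family of fix loci sees exactly the diagonal specialization of the multivariate Alexander module.
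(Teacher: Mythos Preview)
Your proposal is correct and follows essentially the same approach as the paper. The paper states this corollary as a direct consequence of Proposition \ref{prop:span-trace} without spelling out a proof; the details you supply (applying $\cA_t$ to the decomposition (\ref{eq:closure-braid}), identifying the resulting middle object with $\R_{\AQ{t}}(\overline{B})$, and invoking Theorem \ref{thm:Alexander-coherent} to pass to the Alexander module) are exactly those the paper makes explicit in the proof of the subsequent Corollary \ref{cor:Burau}.
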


\begin{cor}\label{cor:Burau}
Given a braid $B \in \mathrm{B}_n$, let $\cA_t(B): \CC^n \to \CC^n$ be its Burau representation. Consider the $n \times n$-matrix $ \Id - \cA_t(B)$, and let $(\Id - \cA_t(B))_0$ denote the $(n-1) \times (n-1)$-matrix given by removing its first row and column. Then, the Alexander polynomial of the closure link $\overline{B}$ of the braid $B$ is given by
$$
	\Delta_{\overline{B}}(t) = \det\,(\Id - \cA_t(B))_0.
$$
\end{cor}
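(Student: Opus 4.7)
The plan is to combine the previous corollary with the general dictionary between Alexander polynomials and elementary ideals of presentation matrices developed in Section~\ref{sec:alexsingularvectorbundle}. Setting $R = \CC[t, t^{-1}]$, the previous corollary identifies the Alexander module of $\overline{B}$ with (the degree-one part of) the coordinate ring of $\Fix(\cA_t(B)) = \ker(\Id - \cA_t(B))$, viewed as a singular vector bundle over $\CC^* = \R_{\CC^*}(\overline{B})$. Since this fix locus is cut out of $\CC^* \times \CC^n$ by the $n$ linear equations $(\Id - \cA_t(B))\vec{a} = 0$, its module of degree-one global sections is the cokernel of the $R$-linear map $R^n \to R^n$ whose matrix is $\Id - \cA_t(B)$. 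The Alexander polynomial $\Delta_{\overline{B}}(t)$ is then the generator of the first elementary ideal of this $R$-module, or equivalently the equation of the first stratum of the Alexander stratification~\eqref{eq:Alexander-stratification-link} where the rank of the fibre jumps above its generic value.

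The first step will be to argue that the generic fibre rank equals $1$, with the jumping locus to rank $\ge 2$ being exactly where all $(n-1)\times(n-1)$ minors of $\Id - \cA_t(B)$ vanish. Genericity of rank one follows from the existence of abelian representations: the diagonal configuration $(a, a, \ldots, a)$ always lies in $\Fix(\cA_t(B))$, so $(1, \ldots, 1)^T \in \ker(\Id - \cA_t(B))$ for every $t \in \CC^*$. This immediately forces $\det(\Id - \cA_t(B)) = 0$, so the $0$-th elementary ideal vanishes and the first non-trivial one is generated by the $(n-1)\times(n-1)$ minors, whose gcd is $\Delta_{\overline{B}}(t)$.

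The final and most delicate step is to show that all $n^2$ such minors agree with $\det(\Id - \cA_t(B))_0$ up to units in $R$. The fact that $(1, \ldots, 1)^T$ lies in the kernel already says that the columns of $\Id - \cA_t(B)$ sum to zero, which via elementary column operations implies that, for any fixed deleted row, minors with different deleted columns agree up to sign. To handle different deleted rows, I will establish the symmetric row-sum identity: each elementary generator $\cA_t(e_i)$ from Section~\ref{sec:generatorsdescription} is a block-identity matrix whose only non-trivial $2\times 2$ block has row sums equal to $1$, and this row-sum-one property is preserved under matrix multiplication. Consequently, the rows of $\Id - \cA_t(B)$ sum to zero for every braid $B$, and the analogous elementary row operations show that all $(n-1)\times(n-1)$ minors are equal up to sign. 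Their gcd is therefore $\det(\Id - \cA_t(B))_0$ up to the $\pm t^k$ ambiguity inherent to the Alexander polynomial, completing the identification. I expect the principal obstacle to lie not in this linear-algebra step itself, but in verifying cleanly that the coherent sheaf presented by $\Id - \cA_t(B)$ produces the same first elementary ideal as the classical Fox-calculus Alexander matrix, which requires identifying the abelianized Wirtinger relations~\eqref{eqn:XinftytypeIrel}--\eqref{eqn:XinftytypeIIrel} with the entries of $\Id - \cA_t(B)$ via $\As(Q_{\overline{B}}) \cong \pi_1(\overline{B}^c)$.
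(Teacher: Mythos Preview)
Your overall strategy matches the paper's: identify $\Id - \cA_t(B)$ as a presentation matrix of the Alexander module via the previous corollary and Theorem~\ref{thm:Alexander-coherent}, then read off $\Delta_{\overline{B}}$ as an $(n-1)\times(n-1)$ minor. The paper's proof stops there, simply asserting that any such minor gives the Alexander polynomial. (Your closing worry about matching with the Fox-calculus matrix is unfounded: Theorem~\ref{thm:Alexander-coherent} already identifies the two modules intrinsically, so their Fitting ideals coincide automatically.)

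Your attempt to justify the minor step explicitly contains a genuine error. Both of your arguments establish the \emph{same} fact, namely $(\Id-\cA_t(B))\mathbf{1}=0$: the kernel observation says it directly, and the ``row sums equal $1$'' property of the generators says $\cA_t(B)\mathbf{1}=\mathbf{1}$, which is identical. This means the \emph{column vectors} of $\Id-\cA_t(B)$ sum to zero, and it correctly yields that minors with a fixed deleted row and varying deleted column agree up to sign. But for your ``analogous elementary row operations'' you would need the \emph{row vectors} to sum to zero, i.e.\ $\mathbf{1}^T(\Id-\cA_t(B))=0$, and this is false: already for $n=2$ one has $(1,1)\left(\begin{smallmatrix}1-t & t\\ 1 & 0\end{smallmatrix}\right)=(2-t,\,t)\neq(1,1)$. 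Correspondingly, minors with different deleted rows are \emph{not} equal up to sign in general; for $B=e_1\in\mathrm{B}_2$ the matrix $\Id-\cA_t(B)=\left(\begin{smallmatrix}t & -t\\ -1 & 1\end{smallmatrix}\right)$ has first-column-deleted minors $1$ and $-t$.

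The repair is that the correct left eigenvector is $(1,t,t^2,\ldots,t^{n-1})$ rather than $(1,\ldots,1)$: each generator satisfies $(1,t,\ldots,t^{n-1})\,\cA_t(e_i)=(1,t,\ldots,t^{n-1})$ because the only non-trivial constraint is $v_{i+1}=tv_i$, and this persists under products. Hence $\sum_i t^{i-1}\,\text{row}_i(\Id-\cA_t(B))=0$, and the same substitution argument shows that minors with different deleted rows differ by factors $\pm t^k$. Combined with your column argument, all $(n-1)\times(n-1)$ minors agree up to the expected unit ambiguity, so in particular $\det(\Id-\cA_t(B))_0=\Delta_{\overline B}(t)$.
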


\begin{proof}
By (\ref{eq:closure-braid}) and Proposition \ref{prop:span-trace}, we have that $\R_{\AQ{t}}(\overline{B}) = \Fix(\cA_{t}(B)) = \ker (\id - \cA_t(B))$. But, by Theorem \ref{thm:Alexander-coherent}, we have that the Alexander module $M_{\overline{B}}$ coincides with the module of regular functions on $\R_{\agl}(\overline{B})$, whose fibre over $t \in \CC^*$ is $\R_{\AQ{t}}(\overline{B})$. Hence $\ker(\id - \cA_t(B))$ is the vanishing ideal of $\R_{\agl}(\overline{B})$ and thus $\id - \cA_t(B)$ is a presentation matrix for the Alexander module. Therefore, any $(n-1)\times (n-1)$-minor of this matrix leads to the Alexander polynomial of $\overline{B}$.
\end{proof}

\begin{rem}
    In Corollary \ref{cor:Burau}, any other column or row can be removed to compute the determinant. However, since the Alexander polynomial is only determined up to a factor $\pm t^k$, it turns out that the resulting polynomial will differ by one of these factors. Making the choice of removing the first row and column leads to a polynomial with positive constant term, which coincides with the usual normalization of the Alexander polynomial.
\end{rem}

\begin{rem}
The automorphism $\cA_t(B): \CC^n \to \CC^n$ always has a fixed line. Indeed, identifying $\Fix(\cA_t(B)) = \R_{\AQ{t}}(\overline{B})$, we observe that any representation variety of a knot contains the abelian representations $x_i \mapsto a$ for a fixed $a \in \AQ{t}$ and every generator $x_i$ of $Q_{\overline{B}}$. In this manner, $\R_{\AQ{t}}(\overline{B})$ is at least one-dimensional for every $t \in \CC^*$, so the Burau representation $\cA_t(B)$ contains a fixed line (i.e.\ that generated by $(1, \ldots, 1)$). This implies that $\det (\Id - \cA_t(B))= 0$ for all $t \in \CC^*$, and hence one must consider smaller minors to extract meaningful information.

Furthermore, the above argument also shows that the Burau representation $\cA_t: B_n \to \GL{n}(\CC)$ is always a reducible representation, because it contains an invariant line. For this reason, it is customary in the literature to consider the so-called reduced Burau representation $\cA_t^{\textrm{red}}: B_n \to \GL{n-1}(\CC)$ by restricting $\cA_t$ to a certain subspace. Keeping track of the change of basis performed to get the reduced Burau representation, we get another formula for the Alexander polynomial that is typically exhibited in the literature
$$
	\Delta_{\overline{B}}(t) = \frac{1-t}{1-t^n} \det\,(\Id - \cA_t^{\textrm{red}}(B)).
$$
\end{rem}

\bibliographystyle{abbrv}
\bibliography{AGL1Alexander.bib}

\begin{thebibliography}{10}

\bibitem{Alexanderoriginal}
J.~W. Alexander.
\newblock Topological invariants of knots and links.
\newblock {\em Trans. Am. Math. Soc.}, 30:275--306, 1928.

\bibitem{racks}
N.~Andruskiewitsch and M.~Gra\~na.
\newblock From racks to pointed {H}opf algebras.
\newblock {\em Adv. Math.}, 178(2):177--243, 2003.

\bibitem{atiyah1988topological}
M.~F. Atiyah.
\newblock Topological quantum field theory.
\newblock {\em Publications Math{\'e}matiques de l'IH{\'E}S}, 68:175--186,
  1988.

\bibitem{bar1995vassiliev}
D.~Bar-Natan.
\newblock On the {V}assiliev knot invariants.
\newblock {\em Topology}, 34(2):423--472, 1995.

\bibitem{benard2021slope}
L.~B{\'e}nard, V.~Florens, and A.~Rodau.
\newblock A slope invariant and the {A}-polynomial of knots.
\newblock {\em arXiv preprint arXiv:2103.14151}, 2021.

\bibitem{bigelow2015alexander}
S.~Bigelow, A.~Cattabriga, and V.~Florens.
\newblock Alexander representation of tangles.
\newblock {\em Acta Mathematica Vietnamica}, 40(2):339--352, 2015.

\bibitem{bonahonwong2}
F.~Bonahon and H.~Wong.
\newblock Kauffman brackets, character varieties and triangulations of
  surfaces.
\newblock In {\em Topology and geometry in dimension three. Triangulations,
  invariants, and geometric structures. Conference in honor of William Jaco's
  70th birthday, 2010}, pages 179--194. American Mathematical Society, 2011.

\bibitem{bonahonwong1}
F.~Bonahon and H.~Wong.
\newblock Representations of the {K}auffman bracket skein algebra. {I}:
  {Invariants} and miraculous cancellations.
\newblock {\em Invent. Math.}, 204(1):195--243, 2016.

\bibitem{burau1935zopfgruppen}
W.~Burau.
\newblock {\"U}ber {Z}opfgruppen und gleichsinnig verdrillte {V}erkettungen.
\newblock In {\em Abhandlungen aus dem Mathematischen Seminar der
  Universit{\"a}t Hamburg}, volume~11, pages 179--186. Springer, 1935.

\bibitem{Burde}
G.~Burde.
\newblock Darstellungen von {Knotengruppen}.
\newblock {\em Math. Ann.}, 173:24--33, 1967.

\bibitem{BurdeZieschangbook}
G.~Burde and H.~Zieschang.
\newblock {\em Knots}, volume~5 of {\em De Gruyter Stud. Math.}
\newblock Berlin: Walter de Gruyter, 2nd revised and extended edition, 2003.

\bibitem{cattabriga}
A.~Cattabriga and E.~Horvat.
\newblock Knot quandle decompositions.
\newblock {\em Mediterr. J. Math.}, 17(3):22, 2020.
\newblock Id/No 98.

\bibitem{cimasoniconway}
D.~Cimasoni and A.~Conway.
\newblock A {Burau}-{Alexander} 2-functor on tangles.
\newblock {\em Fundam. Math.}, 240(1):51--79, 2018.

\bibitem{cimasoni2005lagrangian}
D.~Cimasoni and V.~Turaev.
\newblock A {L}agrangian representation of tangles.
\newblock {\em Topology}, 44(4):747--767, 2005.

\bibitem{cimasoni2006lagrangian}
D.~Cimasoni and V.~Turaev.
\newblock A {Lagrangian} representation of tangles. {II}.
\newblock {\em Fundam. Math.}, 190:11--27, 2006.

\bibitem{Conwayenumeration}
J.~H. Conway.
\newblock An enumeration of knots and links, and some of their algebraic
  properties.
\newblock In {\em Computational {P}roblems in {A}bstract {A}lgebra ({P}roc.
  {C}onf. {O}xford, 1967)}, pages 329--358. Pergamon, 1970.

\bibitem{Cooper1994Planecurves}
D.~Cooper, M.~Culler, H.~Gillet, D.~D. Long, and P.~B. Shalen.
\newblock Plane curves associated to character varieties of 3-manifolds.
\newblock {\em Invent. Math.}, 118(1):47--84, 1994.

\bibitem{CullerShalen}
M.~Culler and P.~B. Shalen.
\newblock Varieties of group representations and splittings of 3-manifolds.
\newblock {\em Ann. Math. (2)}, 117:109--146, 1983.

\bibitem{deRham}
G.~de~Rham.
\newblock Introduction aux polyn{\^o}mes d'un n{{\oe}}ud.
\newblock {\em Enseign. Math. (2)}, 13:187--194, 1967.

\bibitem{sikora3manifolds}
R.~Detcherry, E.~Kalfagianni, and A.~S. Sikora.
\newblock Kauffman bracket skein modules of small 3-manifolds.
\newblock Preprint, {arXiv}:2305.16188, 2023.

\bibitem{fenn1992racks}
R.~Fenn and C.~Rourke.
\newblock Racks and links in codimension two.
\newblock {\em Journal of Knot theory and its Ramifications}, 1(04):343--406,
  1992.

\bibitem{florens2016functorial}
V.~Florens and G.~Massuyeau.
\newblock A functorial extension of the abelian {R}eidemeister torsions of
  three-manifolds.
\newblock {\em L’Enseignement Math{\'e}matique}, 61(1):161--210, 2016.

\bibitem{Homflyoriginal}
P.~Freyd, D.~Yetter, J.~Hoste, W.~B.~R. Lickorish, K.~Millett, and A.~Ocneanu.
\newblock A new polynomial invariant of knots and links.
\newblock {\em Bull. Amer. Math. Soc. (N.S.)}, 12(2):239--246, 1985.

\bibitem{frohman1992alexander}
C.~Frohman and A.~Nicas.
\newblock The {A}lexander polynomial via topological quantum field theory.
\newblock In {\em Differential Geometry, Global Analysis, and Topology,
  Canadian Math. Soc. Conf. Proc}, volume~12, pages 27--40, 1992.

\bibitem{gonzalez2020lax}
{\'A}.~Gonz{\'a}lez-Prieto, M.~Logares, and V.~Munoz.
\newblock A lax monoidal {T}opological {Q}uantum {F}ield {T}heory for
  representation varieties.
\newblock {\em Bulletin des Sciences Math{\'e}matiques}, 161:102871, 2020.

\bibitem{finiteskein}
S.~Gunningham, D.~Jordan, and P.~Safronov.
\newblock The finiteness conjecture for skein modules.
\newblock {\em Invent. Math.}, 232(1):301--363, 2023.

\bibitem{Hartshorne}
R.~Hartshorne.
\newblock {\em Algebraic geometry}, volume~52 of {\em Grad. Texts Math.}
\newblock Springer, 1977.

\bibitem{heusenerportipsl}
M.~Heusener and J.~Porti.
\newblock Deformations of reducible representations of 3-manifold groups into
  {{\(\text{PSL}_2(C)\)}}.
\newblock {\em Algebr. Geom. Topol.}, 5:965--997, 2005.

\bibitem{heusenerportipeiro}
M.~Heusener, J.~Porti, and E.~Su{\'a}rez~Peir{\'o}.
\newblock Deformations of reducible representations of 3-manifolds into
  {{\(\text{SL}_2(\mathbb{C})\)}}.
\newblock {\em J. Reine Angew. Math.}, 530:191--227, 2001.

\bibitem{hironaka1997alexander}
E.~Hironaka.
\newblock Alexander stratifications of character varieties.
\newblock {\em Annales de l'institut Fourier}, 47(2):555--583, 1997.

\bibitem{Jonesoriginal}
V.~Jones.
\newblock A polynomial invariant for knots via von {N}eumann algebras.
\newblock {\em Bull. Amer. Math. Soc. (N.S.)}, 12(1):103--111, 1985.

\bibitem{Joycequandle}
D.~Joyce.
\newblock A classifying invariant of knots, the knot quandle.
\newblock {\em J. Pure Appl. Algebra}, 23:37--65, 1982.

\bibitem{Kasselquantumgroups}
C.~Kassel.
\newblock {\em Quantum groups}, volume 155 of {\em Grad. Texts Math.}
\newblock Springer-Verlag, 1995.

\bibitem{KasselTuraevbraidgroups}
C.~Kassel and V.~Turaev.
\newblock {\em Braid groups. {With} the graphical assistance of {Olivier}
  {Dodane}}, volume 247 of {\em Grad. Texts Math.}
\newblock Springer, 2008.

\bibitem{Kauffmanonknotsbook}
L.~H. Kauffman.
\newblock {\em On knots}, volume 115 of {\em Ann. Math. Stud.}
\newblock Princeton University Press, 1987.

\bibitem{khovanovCategorification}
M.~Khovanov.
\newblock A categorification of the {J}ones polynomial.
\newblock {\em Duke Math. J.}, 101(3):359--426, 2000.

\bibitem{taehee}
T.~Kim, T.~Kitayama, and T.~Morifuji.
\newblock Twisted {Alexander} polynomials on curves in character varieties of
  knot groups.
\newblock {\em Int. J. Math.}, 24(3):16, 2013.

\bibitem{Kozaideformations}
K.~Kozai.
\newblock Deformations of reducible {{\(\operatorname{SL}(n,\mathbb{C})\)}}
  representations of fibered 3-manifold groups.
\newblock {\em Osaka J. Math.}, 59(3):515--527, 2022.

\bibitem{lescop1998sum}
C.~Lescop.
\newblock A sum formula for the {C}asson-{W}alker invariant.
\newblock {\em Inventiones mathematicae}, 133(3):613--681, 1998.

\bibitem{Lickorish}
W.~Lickorish.
\newblock {\em An introduction to knot theory}, volume 175 of {\em Graduate
  Texts in Mathematics}.
\newblock Springer-Verlag, 1997.

\bibitem{long1989linear}
D.~Long.
\newblock On the linear representation of braid groups.
\newblock {\em Transactions of the American Mathematical Society}, pages
  535--560, 1989.

\bibitem{long1989blinear}
D.~D. Long.
\newblock On the linear representation of braid groups. {II}.
\newblock {\em Duke Math. J.}, 59(2):443--460, 1989.

\bibitem{rationalknots}
J.~Mart{\'{\i}}nez.
\newblock Affine representations of rational and pretzel knots.
\newblock In preparation.

\bibitem{Matveev}
S.~V. Matveev.
\newblock Distributive groupoids in knot theory.
\newblock {\em Math. USSR, Sb.}, 47:73--83, 1984.

\bibitem{Milnortorsion}
J.~Milnor.
\newblock Whitehead torsion.
\newblock {\em Bull. Amer. Math. Soc.}, 72:358--426, 1966.

\bibitem{Burde-deRham2}
Y.~Mizusawa, R.~Tange, and Y.~Terashima.
\newblock On the {Burde}-de {Rham} theorem for finitely presented pro-{{\(p\)}}
  groups.
\newblock {\em Int. Math. Res. Not.}, 2025(1):18, 2025.

\bibitem{knotfloer1}
P.~Ozsv\'ath and Z.~Szab\'o.
\newblock Holomorphic disks and knot invariants.
\newblock {\em Adv. Math.}, 186(1):58--116, 2004.

\bibitem{SikoraPrz}
J.~H. Przytycki and A.~S. Sikora.
\newblock On skein algebras and {{\(Sl_2(\mathbb{C})\)}}-character varieties.
\newblock {\em Topology}, 39(1):115--148, 2000.

\bibitem{RTinvariants}
N.~Reshetikhin and V.~G. Turaev.
\newblock Invariants of 3-manifolds via link polynomials and quantum groups.
\newblock {\em Invent. Math.}, 103(3):547--597, 1991.

\bibitem{robert2022quantum}
L.-H. Robert and E.~Wagner.
\newblock A quantum categorification of the {A}lexander polynomial.
\newblock {\em Geometry \& Topology}, 26(5):1985--2064, 2022.

\bibitem{segal2001topological}
G.~Segal.
\newblock Topological structures in string theory.
\newblock {\em Philos. Transact. A Math. Phys. Eng. Sci.},
  359(1784):1389--1398, 2001.

\bibitem{silverwilliams}
D.~S. Silver and S.~G. Williams.
\newblock On a theorem of {Burde} and de {Rham}.
\newblock {\em J. Knot Theory Ramifications}, 20(5):713--720, 2011.

\bibitem{stroppel2022categorification}
C.~Stroppel.
\newblock Categorification: tangle invariants and $\mathrm{TQFT}$s.
\newblock In {\em ICM-International Congress of Mathematicians}, volume~2,
  pages 1312--1353, 2022.

\bibitem{Turaevtorsion}
V.~G. Turaev.
\newblock Reidemeister torsion in knot theory.
\newblock {\em Uspekhi Mat. Nauk}, 41(1):97--147, 1986.

\bibitem{Turaevskeinquantization}
V.~G. Turaev.
\newblock Skein quantization of {Poisson} algebras of loops on surfaces.
\newblock {\em Ann. Sci. {\'E}c. Norm. Sup{\'e}r. (4)}, 24(6):635--704, 1991.

\bibitem{vassiliev1992cohomology}
V.~A. Vassiliev.
\newblock Cohomology of knot spaces.
\newblock In {\em Theory of singularities and its applications}, pages 23--70.
  Amer. Math. Soc., 1990.

\bibitem{vogel2024cohomology}
J.~Vogel.
\newblock Cohomology of character stacks via {TQFT}s.
\newblock {\em arXiv preprint arXiv:2406.19857}, 2024.

\bibitem{waldhausen1968irreducible}
F.~Waldhausen.
\newblock On irreducible 3-manifolds which are sufficiently large.
\newblock {\em Annals of Mathematics}, 87(1):56--88, 1968.

\bibitem{weil}
A.~Weil.
\newblock Remarks on the cohomology of groups.
\newblock {\em Annals Math.}, 80(1):149--157, 1964.

\bibitem{wittenjonespoly}
E.~Witten.
\newblock Quantum field theory and the {J}ones polynomial.
\newblock {\em Comm. Math. Phys.}, 121(3):351--399, 1989.

\end{thebibliography}
\end{document}